\definecolor{halfgray}{gray}{0.55}
\definecolor{webgreen}{rgb}{0,.5,0}
\definecolor{webbrown}{rgb}{.6,0,0}
\definecolor{Maroon}{cmyk}{0, 0.87, 0.68, 0.32}
\definecolor{RoyalBlue}{cmyk}{1, 0.50, 0, 0}
\definecolor{Black}{cmyk}{0, 0, 0, 0}
\definecolor{pinkish}{RGB}{255, 192, 203}
\definecolor{orange}{rgb}{216, 64, 0}
\newcommand{\X}{\mathbf X}
\newcommand{\N}{\mathbb N}
\providecommand{\N}{\mathbb{N}}
\providecommand{\R}{\mathbb{R}}
\providecommand{\Rd}{{\mathds{R}^d}}
\newcommand{\Zd}{{\mathbb{Z}^d}}
\providecommand{\Z}{{\mathbb{Z}}}
\providecommand{\Q}{\mathbb{Q}}
\renewcommand{\P}{\mathbb{P}}
\newcommand{\E}{\mathbb{E}}
\renewcommand{\nicefrac}[2]{#1/#2}
\providecommand{\e}{\textup{e}}
\providecommand{\X}{X^{\deg}}
\providecommand{\g}{\gamma}
\providecommand{\e}{\varepsilon}
\renewcommand{\P}{\mathbb P}
\renewcommand{\phi}{\varphi}
\newcommand{\x}{\mathbf{x}}
\newcommand{\y}{\mathbf{y}}
\newcommand{\z}{\mathbf{z}}
\newcommand{\abs}[1]{\left| #1 \right|}
\begin{document}



\section{Introduction and statement of results}\label{sec:intro}

In this paper we investigate the classical problem of transience versus recurrence of random walks on the infinite cluster of geometric random graphs in $d$-dimensional Euclidean space. We consider a general class of random graph models, which we call the \emph{weight-dependent random connection model}. This class contains classical models like the Boolean and random connection models as well as models that have long edges and scale-free degree distributions. The focus in this paper is on those instances  where the long-range or scale-free nature of graphs leads to new or even surprising results.
\smallskip

\pagebreak[3]
%

The vertex set of the weight-dependent random connection model is a Poisson process of  unit intensity on $\R^d\times (0,1)$, for $d\geq1$. We think of a Poisson point $\x=(x,s)$ as a \emph{vertex} at \emph{position}~$x$ with \emph{mark} $s$.
Two vertices $\x=(x,s)$ and $\y=(y,t)$ are connected by an edge with probability $\phi(\x,\y)$ for a connectivity function 
\begin{equation}\label{eq:phidef}
\phi\colon \big(\Rd\times(0,1)\big) \times\big(\Rd\times(0,1)\big) \to [0,1],
\end{equation}
satisfying $\phi(\x,\y)=\phi(\y,\x)$. Connections between different (unordered) pairs of vertices occur independently. We assume throughout that $\phi$ has the form 
\begin{equation}
\phi(\x,\y)=\phi\big((x,s),(y,t)\big)=\rho\big(g(s,t)|x-y|^d\big)
\end{equation}
for a non-increasing \emph{profile function} $\rho\colon \R_+\to[0,1]$ 
and a suitable \emph{kernel} $g\colon(0,1)\times(0,1)\to\R_+$, which is
non-decreasing in both arguments. 
Hence vertices whose positions are far apart are less likely to be connected while vertices with small mark are likely to have many connections. 
We standardise the notation (without losing generality, see comment (i) in \cite[p.\ 312]{GracaGraueLuchtMorte19}) 
and assume that
\begin{equation}\label{eq:rhoass}
\int_{\R^d}\rho(|z|^d)\,\textup{d}z=1. 
\end{equation}
Then it is easy to see that the degree distribution of a vertex depends only on the kernel~$g$ (see for example \cite[Proposition 4.1]{GracaGraueLuchtMorte19}), but $\rho$ still has 
a massive influence on the likelihood of long edges in the graph. 
\smallskip

\pagebreak[3]
We next give concrete examples for the kernel $g$, and demonstrate that our setup yields a number of well-known models in continuum percolation theory. We define the functions in terms of parameters $\gamma\in[0,1)$ 
and $\beta\in(0,\infty)$. 
The parameter $\gamma$ determines the strength of the influence of the vertex mark on the connection probabilities, large $\gamma$ correspond to strong favouring of vertices with small mark. 
In particular,  if $\gamma>0$ our models are scale-free with power-law exponent
$$\tau=1+\mbox{$\frac1\gamma$}.$$
The edge density is controlled by the parameter $\beta$, increasing $\beta$ increases the expected number of edges incident to a vertex at the origin. 
Varying $\beta$ can also be interpreted as rescaling distances and is equivalent to varying the density of the underlying Poisson process. 
\medskip

\begin{itemize}
\item We define the \emph{plain kernel} as 
\begin{equation}\label{eqPlainKernel}
	g^{\rm plain}(s,t)=\tfrac1\beta.
\end{equation}
In this case we have no dependence on the marks, hence the model is not scale-free and we let $\gamma=0$.
If \smash{$\rho(r)={1}_{[0,a]}(r)$} for \smash{$a={{d}/{\omega_d}}$} and $\omega_d$ is the area of the unit sphere in $\R^d$, this gives the \emph{Gilbert disc model} with radius 
{$\frac12\sqrt[d]{\beta a}$}. Functions $\rho$ of more general form lead to the (ordinary) \emph{random connection model}, including in particular a continuum version of \emph{long-range percolation} when $\rho$ has polynomial decay at infinity. \smallskip
\item We define the \emph{sum kernel} as
\begin{equation}\label{eqSumKernel}
	g^{\rm sum}(s,t)=\tfrac1\beta\, \big(s^{-\gamma/d} + t^{-\gamma/d}\big)^{-d} 
\end{equation}
Interpreting $(\beta a s^{-\gamma})^{1/d}, (\beta a t^{-\gamma})^{1/d}$ as random radii
and letting $\rho(r)=1_{[0,a]}(r)$ we get the \emph{Boolean model} in which two vertices are connected by an edge if the associated balls intersect. The case of general profile function $\rho$ is referred to as \emph{soft Boolean model} 
in~\cite{GracaGraueMorte21}. We get a further variant of the model with the \emph{min-kernel} defined as 
$$g^{\rm min}(s,t)=\tfrac1\beta\, (s \wedge t)^\gamma .$$ 
Because $g^{\rm sum}\le g^{\rm min}\le2^dg^{\rm sum}$ the two kernels show qualitatively the same behaviour.\smallskip
\item For the \emph{max-kernel} defined as
$$g^{\rm max}(s,t)=\tfrac1\beta\, (s \vee t)^{1+\gamma},$$ 
we may choose any $\gamma>0$. This is a continuum version and generalization of the ultra-small scale-free geometric networks of Yukich~\cite{Yukic06}, which
is also parametrized to have power-law exponent $\tau=1+\frac1{\gamma}$. 
\item
A particularly interesting  case is the \emph{product kernel}
\begin{equation}\label{eqProductKernel}
	g^{\rm prod}(s,t)=\tfrac1\beta\, s^\gamma t^\gamma,
\end{equation}
which leads to a continuum version of the \emph{scale-free percolation} model 
of Deijfen et al.\ \cite{DeijfHofstHoogh13,HeydeHulshJorri17}, see also  \cite{DepreHazraWuthr15,DepreWuthr19}. Here \smash{$s^{-\gamma}, t^{-\gamma}$} play the role of vertex weights. The model combines features of scale-free random graphs 
and polynomial-decay long-range percolation models
(for suitable choice of $\rho$).  
\smallskip
\item Our final example of a kernel $g$ is the \emph{preferential attachment kernel} 
\begin{equation}\label{eqPAKernel}
	g^{\rm pa}(s,t)=\tfrac1\beta\,(s\vee t)^{1-\gamma}(s\wedge t)^\gamma, 
\end{equation}
which gives rise to the \emph{age-dependent random connection model} introduced by Gracar et 
al.\ \cite{GracaGraueLuchtMorte19} as an approximation to the local weak limit of the spatial preferential attachment model in Jacob and M\"orters \cite{JacobMorte15}. 
In this model, $s$ and $t$ actually play the role of the birth times of vertices in the underlying dynamic network, we therefore refer to vertices with small $s$ 
as old vertices. This model also combines scale-free degree distributions
and long edges in a natural way.
\end{itemize}
\pagebreak[3]

\begin{table}[t]
\begin{center}
	\caption{Terminology of the models in the literature. }
	\label{tabModels}
	\begin{tabular}{llll} 
		Vertices & Profile & Kernel & Name and reference\\
		\hline
		Poisson & indicator & plain  & Random geometric graph, Gilbert disc model \cite{Penro03}\\
		Poisson & general & plain & Random connection model \cite{Meest97}\\
		&&& Soft random geometric graph \cite{Penro16} \\
		lattice & polynomial & plain & Long-range percolation \cite{Berge02}\\
		Poisson & indicator & sum & (Poisson) Boolean model \cite{Hall85, Meest94}\\
		Poisson & polynomial & sum & {Soft Boolean model} \cite{GracaGraueMorte21}\\
		lattice & indicator & max & Ultra-small scale-free geometric networks \cite{Yukic06}\\
		Poisson & indicator & min & Scale-free Gilbert graph \cite{Hirsc17} \\
		lattice & polynomial & prod & Scale-free percolation \cite{DeijfHofstHoogh13,HeydeHulshJorri17} \\ 
		Poisson & polynomial & prod & Inhomogeneous long-range percolation \cite{DepreHazraWuthr15}\\
		&&&Continuum scale-free percolation \cite{DepreWuthr19}\\ 
		Poisson & general & prod & Geometric inhomogeneous random graphs \cite{BringmKeuscLengl19,KomjaLodew20}\\
		Poisson& general & pa & Age-dependent random connection model \cite{GracaGraueLuchtMorte19}\\
		\hline
	\end{tabular}
\end{center}
\end{table}

The weight-dependent random connection model with its different kernels has been studied in the literature under various names, we summarize some of them in Table \ref{tabModels}. A general framework in which models such as ours arise as limits of models on finite domains recently appeared in~\cite{HofstHoornMaitr21}.
We now focus on a profile function with polynomial decay 
\begin{equation}\label{eq:rhoass2}
\lim_{r\to\infty}\rho(r)\,r^{\delta} =1 \quad \text{for a parameter $\delta>1$,}
\end{equation}
and fix one of the kernel functions described above. 
We keep $\gamma, \delta$ fixed and study the resulting graph $\mathcal{G}^\beta$ as a function of $\beta$. Note that our assumptions $\delta>1$ and $\gamma<1$ imply that $\mathcal{G}^\beta$ is locally finite for all values of $\beta$, cf.~\cite[p.8]{GracaGraueLuchtMorte19}. 
We informally define $\beta_c$ as the infimum over all values of $\beta$ such that $\mathcal{G}^\beta$ contains an infinite component (henceforth the infinite \emph{cluster}); for a rigorous definition we refer to Section \ref{secModel}.  If \(d\geq 2\), we always have $\beta_c<\infty$, cf.~\cite{HeydeHulshJorri17}.
{General considerations show that there is at most one infinite cluster of $\mathcal G^\beta$. Indeed, 
it is established in \cite{GandolKeaneNewma92} that on $\Z^d$ there is at most one infinite cluster if the edge occupation measure is stationary and obeys the `finite energy property'; an analogous result for Poisson points applies in our case, cf.\ \cite{BurtoMeest93, Meest94}. Hence, we have that there is a unique infinite cluster whenever $\beta>\beta_c$.} We study the properties of this infinite cluster. 
\medskip

Two cases correspond to different network topologies, see \cite{GracaLuchtMorte20, HeydeHulshJorri17, Yukic06}. If $\gamma>\frac12$ for the product kernel, or \smash{$\gamma>\frac{\delta}{\delta+1}$} for the preferential attachment, sum, or min kernel, or $\gamma>0$ for the max kernel,
we have $\beta_c=0$, i.e.\ there exists an infinite cluster irrespective of the edge density. We call this the \emph{robust case}. Otherwise, if $\gamma<\frac12$ for the product kernel or \smash{$\gamma<\frac{\delta}{\delta+1}$} for the preferential attachment, sum or min kernels, we have $\beta_c>0$ and call this the  \emph{non-robust case}.
\medskip

Our main interest is in whether the infinite cluster is recurrent (i.e., whether simple random walk on the cluster returns to the starting vertex almost surely), or transient (i.e., simple random walk on the cluster has positive probability of never returning to the starting vertex). 
Our results are summarized in the following theorem.\medskip 

\begin{theorem}[Recurrence vs.\ transience of the weight-dependent random connection model]
\label{thmRecurTrans}
{Consider the weight-dependent random connection model with profile function satisfying \eqref{eq:rhoass2} and assume that we are in the supercritical regime $\beta>\beta_c$.} 

\begin{enumerate}
	\item[(a)] For the preferential attachment kernel, the infinite component is almost surely 
		\begin{itemize}
			\item \emph{transient} if $1<\delta<2$ or $\gamma>\delta/(\delta+1)$;
			\item \emph{recurrent} in {$d=2$ if $\delta>2$ and $\gamma<1/3$}.
		\end{itemize}
	\item[(b)] For the min kernel and the sum kernel, the infinite component is almost surely 
	\begin{itemize} 
		\item \emph{transient} if $1<\delta<2$ or 
		$\gamma>\delta/(\delta+1)$;
		\item \emph{recurrent} in $d=2$ if  $\delta>2$ and $\gamma < 1/2$. 
	\end{itemize}
	\item[(c)] For the product kernel, the infinite component is almost surely 
	\begin{itemize} 
		\item \emph{transient} if $1<\delta<2$ or 
		$\gamma>1/2$;
		\item \emph{recurrent} in $d=2$ if  $\delta>2$ and $\gamma < 1/2$.
	\end{itemize}
	\item[(d)] For the max
	kernel, the infinite component is almost surely \emph{transient} 
	{for all $0<\gamma<1$ and~$\delta>1$}. 
\end{enumerate}
\end{theorem}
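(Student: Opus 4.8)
\emph{Strategy and reductions.} Transience can be forced here by two essentially independent mechanisms: an abundance of long edges, controlled by $\delta$, and a hierarchy of high-degree hubs, present exactly in the robust regime and controlled by $\gamma$. Accordingly the transience assertions split into the case $1<\delta<2$ and the robust case, while the recurrence assertions ($d=2$, $\delta>2$, $\gamma$ small) require ruling out both mechanisms at once. Two preliminary simplifications: since $g^{\rm sum}\le g^{\rm min}\le 2^dg^{\rm sum}$, the two models in (b) have connection functions that coincide up to a bounded rescaling of $\beta$ and of distances, so it suffices to treat the sum kernel; and since transience and recurrence are zero--one events under the ergodic shift on the Poisson process, it suffices in each case to exhibit the desired behaviour with positive probability. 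Finally, for $d\ge3$ a standard renormalisation shows that for every $\beta>\beta_c$ the infinite cluster dominates a highly supercritical site percolation on $\Z^d$ with short-range dependence and good connectivity, which is transient by the theorem of Grimmett, Kesten and Zhang; hence for $d\ge3$ all the transience statements hold automatically, and the real content of (a)--(d) is the case $d=2$, to which I now restrict.

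\emph{Transience for $1<\delta<2$.} For every one of our kernels $g$ is bounded above on $(0,1)^2$, so there is $c=c(\beta)>0$ with $\phi(\x,\y)\ge\rho(c|x-y|^d)$ uniformly in the marks; thus $\mathcal G^\beta$ contains, as a spanning subgraph, a continuum long-range percolation model whose connection probability decays like $|x-y|^{-\delta d}$, with polynomial exponent $\delta d\in(2,4)$ since $d=2$. Coarse-graining this sub-model over a dyadic family of boxes, and passing if necessary to a sufficiently sparsified sub-family to restore independence between blocks, realises its infinite cluster as dominating the infinite cluster of a long-range percolation on $\Z^2$ with the same polynomial exponent in $(2,4)$ and a density that can be taken supercritical; by Berger's theorem \cite{Berge02} the latter cluster is transient, hence so is $\mathcal G^\beta$. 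The one point requiring care is the coarse-graining, where one must verify that restricting to boxes preserves both enough independence and the exact tail exponent; this is routine provided the block sizes grow at a controlled rate.

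\emph{Transience in the robust regime.} In the robust case --- $\gamma>\delta/(\delta+1)$ for the sum/min/preferential attachment kernels, $\gamma>1/2$ for the product kernel, every $\gamma>0$ for the max kernel --- one has $\beta_c=0$, and the existing proofs of this fact (\cite{HeydeHulshJorri17} for the product kernel, \cite{GracaLuchtMorte20,GracaGraueLuchtMorte19} for the preferential attachment kernel, \cite{Yukic06} for the max kernel) all proceed by exhibiting, for every $\beta>0$, an infinite hierarchy of vertices whose marks decrease geometrically along generations and in which each vertex of generation $k$ is joined, inside a controlled spatial window, to many vertices of generation $k+1$. The plan is to re-run these constructions while tracking the branching quantitatively: one extracts from $\mathcal G^\beta$ a rough embedding of a spherically symmetric tree $\mathcal T$ in which every vertex at depth $k$ has at least $b_k\ge2$ offspring, choosing the mark scales so that $\sum_{k\ge1}(b_1\cdots b_k)^{-1}<\infty$. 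By the series--parallel (Nash--Williams / Lyons) criterion for trees, $\mathcal T$ is transient, and transience passes to any graph that roughly embeds a transient graph, so $\mathcal G^\beta$ is transient. The work here is bookkeeping: extracting from the known robustness arguments uniform lower bounds on the $b_k$ and a uniformly bounded number of internally vertex-disjoint connecting paths of bounded length at each level.

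\emph{Recurrence in $d=2$.} Here $\delta>2$ and $\gamma$ is below the stated threshold; by the zero--one law it suffices to show that the effective resistance from the origin to infinity in $\mathcal G^\beta$ is infinite with positive probability. The tool is a Nash--Williams-type lower bound using the cutsets $\Pi_n$ of edges $\{(x,s),(y,t)\}$ of $\mathcal G^\beta$ crossing the circle of radius $n$ about the origin, arranged into an essentially disjoint family (possible because the long edges are sparse, see below); one then needs $\sum_n|\Pi_n|^{-1}=\infty$, for which $|\Pi_n|=O(n\log n)$ typically suffices. The short edges near the circle contribute $O(n)$, of perimeter order. The long edges are controlled by $\delta>2$: a first-moment computation gives that the expected number of edges of length comparable to $\ell$ crossing the circle of radius $n$ is of order $n\,\ell^{1+d(1-\delta)}$, and for $d=2$ the exponent $3-2\delta$ is $<-1$ precisely because $\delta>2$, so summing over dyadic $\ell\le n$ gives total order $n$; this is the continuum counterpart of Berger's recurrence argument, and it breaks down --- consistently with the transience statements --- as soon as $\delta<2$. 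The genuinely delicate input, and the main obstacle, is the heavy hubs: a vertex of mark $s$ has degree of order $s^{-\gamma}$, so the expectation of $|\Pi_n|$ is in fact dominated by a few exceptionally heavy vertices once $\gamma$ exceeds a kernel-dependent value (for instance $1/\delta$ for the product kernel), and a pure first-moment bound is not available up to the claimed thresholds. The plan is to handle the hubs scale by scale: those with mark below a threshold $\varepsilon_k$ relevant at radius of order $2^k$ have spatial density $\varepsilon_k$, so with high probability a suitably chosen radius in the dyadic block $[2^k,2^{k+1})$ avoids all of them (a second-moment or mass-transport selection of a ``good circle''), after which the residual conductance of the chosen cutset is controlled by the first-moment estimate restricted to light marks together with the long-edge bound above --- with particular attention to the long edges emanating from hubs, which are doubly costly. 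The precise thresholds --- $\gamma<1/2$ for the product and preferential attachment kernels, $\gamma<(\delta-1)/\delta$ for the sum/min kernels --- are exactly those for which the hub contributions, summed over scales, remain compatible with $\sum_n|\Pi_n|^{-1}=\infty$; the shift away from the robustness value for the sum/min kernels reflects that there a hub's edges reach only distance of order $\max(s,t)^{-\gamma/d}$ rather than $s^{-\gamma/d}t^{-\gamma/d}$, so the same degree is spent more locally.
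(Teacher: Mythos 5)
Your architecture (split into $\delta<2$, robust regime, and recurrence) matches the paper's, but two of your reductions contain genuine gaps. First, the claim that for $d\ge3$ ``a standard renormalisation shows that for every $\beta>\beta_c$ the infinite cluster dominates a highly supercritical site percolation on $\Z^d$'' is precisely the open problem the paper flags: such a domination is only known for \emph{sufficiently large} $\beta$ (and, via Proposition~\ref{lem:perfin}, for all $\beta>\beta_c$ only when $\delta<2$). Since the theorem's transience claims must hold for all $d\ge 2$, you cannot use this to restrict to $d=2$; fortunately your $\delta<2$ and robust arguments do not actually need $d=2$, but as written your proof leaves $d\ge3$ resting on an unproved assertion. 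Second, and more seriously, your transience argument for $1<\delta<2$ coarse-grains the mark-independent sub-model obtained from $\phi(\x,\y)\ge\rho(c|x-y|^d)$ and then invokes ``its infinite cluster.'' That sub-model is a plain-kernel random connection model with its own critical value, which may be \emph{strictly larger} than $\beta_c$ (this is exactly the content of the paper's third remark after Theorem~\ref{thmRecurTrans}); for $\beta$ between the two critical values your sub-model has no infinite cluster and the argument gives nothing. The paper's proof of Theorem~\ref{thm:transnorob} avoids this by declaring lattice sites occupied according to large clusters of the \emph{full} model $\mathcal G^{\beta'}$ for some $\beta'\in(\beta_c,\beta)$ (Proposition~\ref{lem:perfin}), and only then using the mark-uniform lower bound, via the sprinkling coupling of Lemma~\ref{lem:sprinkling}, to connect these clusters with long-range-percolation probabilities. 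Without an analogue of Proposition~\ref{lem:perfin} your $\delta<2$ case only covers large $\beta$.

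The remaining two parts are strategy outlines rather than proofs. In the robust regime your plan (extract a hierarchy of hubs with quantified branching and conclude transience by a tree/Nash--Williams criterion) is essentially the paper's renormalised-graph-sequence construction (Definition~\ref{def:grs}, Lemma~\ref{prop:renorm}, Proposition~\ref{prop:transience}), but the entire difficulty lies in the ``bookkeeping'' you defer: choosing the mark scales $u_n$, controlling the connector probabilities uniformly (Lemma~\ref{lemma:two_connection}), and handling the dependence between boxes (the conditioning on $\mathcal X$ and FKG). For recurrence in $d=2$ your circular-cutset/good-circle programme is a plausible alternative to the paper's box renormalisation (Theorem~\ref{thm:recurrence2new}, with bad and irregular boxes playing the role of your hubs and of long edges into hubs), but the decisive step --- showing that after excising hubs the surviving cutset conductances are $O(n\log n)$ for a positive density of scales, up to the exact thresholds $\gamma<1/2$ resp.\ $\gamma<(\delta-1)/\delta$ --- is asserted, not computed; note also that the preferential attachment kernel requires a separate treatment (the paper's Proposition~\ref{thrm:recurrence_main}) because it fails the mark-integrability condition \eqref{eq:gammacondition} that underlies the hub-excision approach. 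As it stands the proposal identifies the correct mechanisms but does not constitute a proof of any of the four parts.
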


\captionsetup[subfigure]{labelfont=rm}
\begin{figure}[!h]
\centering
\begin{subfigure}{.45\textwidth}\centering 
	\begin{tikzpicture}
		\draw[->] (0,0) -- (5,0) node[right] {$\gamma$};
		\draw	(0,0) node[anchor=north] {0}
		(2,0) node[anchor=north] {\nicefrac{1}{2}}
		(4/3,0) node[anchor=north] {\nicefrac13}
		(4,0) node[anchor=north] {1};
		\draw	
		(0.75,4.5) node{{recur-}}
		(0.75,4) node{{rent}}
		(0.75,3.5) node{{for}}
		(0.75,3) node{{$d=2$}}
		(3.7,4.8) node{{\scriptsize $\gamma=\frac{\delta}{\delta+1}$}};
		
		\draw[->] (0,0) -- (0,5) node[above] {$\delta$};
		\draw (0,0) node[anchor=east] {1}
		(0,2.5) node[anchor=east] {2};
		
		\draw[dotted] (4,0) -- (4,5);
		\draw[thick] (0,2.5) -- (2.7,2.5);
		\draw (1.9,1.5) node { transient}; 
		\draw[thick] (4/3,2.5) -- (4/3,5);
		\draw[dashed] (4/3,2.5) -- (4/3,0);		
		\draw[dashed] (0,0) plot[domain=0.5:0.75,variable=\g]({4*\g},{-2.5+2.5*\g/(1-\g)});
		\draw[thick] (0,0) plot[domain=2/3:0.75,variable=\g]({4*\g},{-2.5+2.5*\g/(1-\g)});
		\draw[pattern=north west lines, pattern color=gray, draw=none] 
		(0,0) plot[smooth,samples=200,domain=2/3:0.75,variable=\g]({4*\g},{-2.5+2.5*\g/(1-\g)}) -- 
		plot[smooth,samples=200,domain=2/3:0.5,variable=\g]({4/3},{-2.5+2.5*1/(1-\g)});
	\end{tikzpicture}
	\caption[(a)]{Preferential attachment kernel}
\end{subfigure}
\begin{subfigure}{.45\textwidth}\centering 
	\begin{tikzpicture}
		\draw[->] (0,0) -- (5,0) node[right] {$\gamma$};
		\draw	(0,0) node[anchor=north] {0}
		(2,0) node[anchor=north] {\nicefrac{1}{2}}
		(4,0) node[anchor=north] {1};
		\draw	
		(1,3.5) node{{recurrent}}
		(1, 3) node{{for $d=2$}}
		(3.7,4.8) node{{\scriptsize $\gamma=\frac{\delta}{\delta+1}$}};
		
		\draw[->] (0,0) -- (0,5) node[above] {$\delta$};
		\draw (0,0) node[anchor=east] {1}
		(0,2.5) node[anchor=east] {2};
		
		\draw[dotted] (4,0) -- (4,5);
		\draw[thick] (0,2.5) -- (2.7,2.5);
		\draw (2.3,1.5) node { transient}; 
		\draw[thick] (2,2.5) -- (2,5);
		\draw[dashed] (0,0) plot[domain=0.5:0.75,variable=\g]({4*\g},{-2.5+2.5*\g/(1-\g)});
		\draw[thick] (0,0) plot[domain=2/3:0.75,variable=\g]({4*\g},{-2.5+2.5*\g/(1-\g)});
		\draw[pattern=north west lines, pattern color=gray, draw=none] 
		(0,0) plot[smooth,samples=200,domain=2/3:0.75,variable=\g]({4*\g},{-2.5+2.5*\g/(1-\g)}) -- 
		plot[smooth,samples=200,domain=2/3:0.5,variable=\g]({2},{-2.5+2.5*1/(1-\g)});
	\end{tikzpicture}
	\caption{Min kernel and sum kernel}
\end{subfigure}

\vskip\baselineskip
\begin{subfigure}{.45\textwidth}\centering 
	\begin{tikzpicture}
		\draw[->] (6,0) -- (11,0) node[right] {$\gamma$};
		\draw	
		(6,0) node[anchor=north] {0}
		(8,0) node[anchor=north] {\nicefrac{1}{2}}
		(10,0) node[anchor=north] {1};
		\draw  (7,3.5) node{{recurrent}};
		\draw   (7, 3) node{for $d=2$};
		
		\draw[->] (6,0) -- (6,5) node[above] {$\delta$};
		\draw (6,0) node[anchor=east] {1}
		(6,2.5) node[anchor=east] {2};
		\draw[dashed] (8,0) -- (8,5);
		\draw[thick] (6.0,2.5) -- (8.0,2.5);
		\draw[dotted] (10,0) -- (10,5);
		\draw[thick] (8,2.5) -- (8,5);
		\draw (8.0,1.5) node { transient}; 
	\end{tikzpicture}
	\caption{Product kernel}
\end{subfigure}
\begin{subfigure}{.45\textwidth}\centering 
	\begin{tikzpicture}
		\draw[->] (6,0) -- (11,0) node[right] {$\gamma$};
		\draw	(6,0) node[anchor=north] {0}
		(8,0) node[anchor=north] {\nicefrac{1}{2}}
		(10,0) node[anchor=north] {1};
		
		\draw[->] (6,0) -- (6,5) node[above] {$\delta$};
		\draw (6,0) node[anchor=east] {1}
		(6,2.5) node[anchor=east] {2};
		\draw[dotted] (10,0) -- (10,5);
		\draw (8.0,2.5) node { transient}; 
		\draw[line width=1.5pt,dotted] (6,2.5) -- (6,4.9);
	\end{tikzpicture}
	\caption{Max kernel}
\end{subfigure}
\caption{Recurrent and transient regimes in Theorem \ref{thmRecurTrans}. The dashed line in (a),(b), (c) indicates the boundary of the robust regime. In the shaded areas of the top two diagrams the behaviour is unknown. The dotted line in (d) indicates recurrence for $d=2$, which follows trivially from (a).}
\label{figRecurTrans}	
\end{figure}
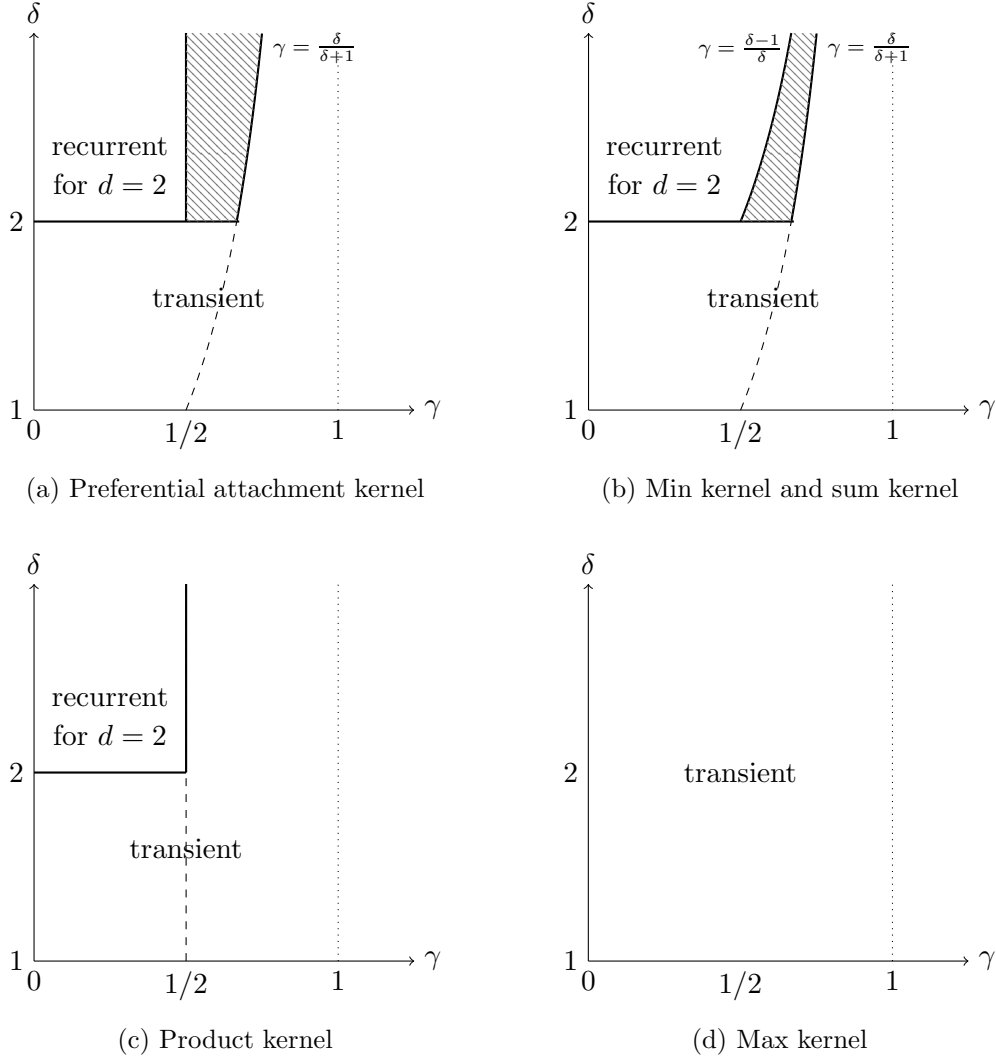
\

\pagebreak[3]
{\bf Remarks:} 
\begin{itemize}
\item \(\mathbf{ d=1 }\): Note that the kernel in (c) never induces a percolating graph in one dimension if $\delta>2$ and $\smash{\gamma<\frac12}$ \cite{DeijfHofstHoogh13} and the same is true for the kernels in (b) if $\delta>2$ and $\smash{\gamma<\frac{\delta-1}\delta}$ \cite{GracaLuchtMonch22}. In these cases all clusters are trivially recurrent. However, for the preferential attachment kernel in one dimension with $\delta>2$, the situation is less clear. It follows from results by Bode et al.\ \cite{BodeFountMuell15} for the `KPKVB-model', that the product kernel (which essentially coincides with the KPKVB-model after a change of coordinates) admits percolation for any $\rho$ which is non-increasing and positive in a neighbourhood of $0$, whenever \smash{$\gamma=\frac12$} and $\beta$ is sufficiently large. By monotonicity, it follows that the same is true for the preferential attachment kernel at any \smash{$\gamma\geq \frac12$}. We currently do not not know whether there are $\smash{\gamma< \frac{1}{2}}$ for which percolation can occur if $\beta$ is sufficiently large, see \cite{GracaLuchtMonch22} for a more detailed discussion.	
\end{itemize}\pagebreak

\begin{itemize}		
\item \(\mathbf{ d=2 }\). For (b) we conjecture that if $\delta>2$ the model is transient precisely if \smash{$\gamma>\frac{\delta-1}{\delta}$}, although our current proof only works if \smash{$\gamma>\frac{\delta}{\delta+1}$}, which by~\cite{GracaLuchtMorte20} is the robust regime. A heuristic argument for this conjectured behaviour is given in~\cite{GracaLuchtMonch22}. 
\item \(\mathbf{ d\geq 3 }\): We conjecture transience to hold in (a)--(c) also if $\delta\geq 2$ for all values of $\gamma>0$, as long as there is an infinite cluster. The analogous problem is open for the random connection model (and long-range percolation) in general. An analysis of this situation is beyond the scope of the present paper, and is postponed to future work. {Mind, however, that transience for \emph{sufficiently large} values of $\beta$ can be established by comparison with bond percolation on $\Z^d$.}
\item  \textbf{Any dimension}:
	\begin{itemize}
	\item {In (a)--(c) for $\delta<2$ our models dominate long-range percolation in the transient regime~\cite{Berge02}. This does not make our analysis redundant, as we show transience for all
		$\beta>\beta_c$ where $\beta_c$ may be strictly smaller than for the dominated model.}\smallskip
	\item For all kernels we consider, our investigation shows that robustness of the infinite cluster is sufficient for its transience, in particular this is the case in (d) for all values of $\gamma\in(0,1)$ and $\delta>1$. It would be interesting to ascertain this implication in greater generality. \smallskip
	\item  {We have excluded the boundary cases $\delta=2$ and \smash{$\gamma=\frac12$} from our main result, as in these cases the behaviour is dependent on fine details of the profile and kernel and therefore less suitable for the universal approach we develop. }	\smallskip					
	\item 
	Our results are in particular true for the plain kernel (corresponding to the case $\gamma=0$ in (b) and (c)), for similar results in this case see \cite{Sonme19}.	
	\end{itemize}
\end{itemize}


For a summary of the results we refer to Figure \ref{figRecurTrans}. {Arguably, the preferential attachment and sum kernels are the most interesting, but also technically the most involved models, and may therefore be considered as the main contribution of the present paper.} Indeed, the results for the product kernel are in correspondence with the findings of Heydenreich, Jorritsma and Hulshof \cite{HeydeHulshJorri17}. The behaviour at the critical point $\beta_c$ for the the plain kernel \eqref{eqPlainKernel} for $d\geq2$, has recently been investigated in \cite{HeydeHofstLastMatzk19} and, in the framework of long-range percolation, in \cite{Hutch21}. However, note that Berger \cite{Berge02} had previously shown that there is no infinite cluster in long range percolation at $\beta=\beta_c$ if $\delta<2$, a result that was later adapted to the product kernel \cite{DepreWuthr19} and that can be shown to remain true for our model whenever $\delta<2$, cf.\ the footnote on p.~\pageref{percfindiscussion}.
Different parametrisations have been used for the various models that can be treated in our framework, we have provided a translation in Table \ref{tabTranslation}.
\smallskip


\begin{table}[h]
\begin{center}
	\caption{Correspondence of parameters between our paper and \cite{DeijfHofstHoogh13, DepreHazraWuthr15, DepreWuthr19, HeydeHulshJorri17}.}
	\label{tabTranslation}
	\begin{tabular}{ccc} 
		parameters in this paper && parameters in \cite{DeijfHofstHoogh13, DepreHazraWuthr15, DepreWuthr19, HeydeHulshJorri17}\\
		\hline\\[-2mm]
		$\beta$ &=& $\lambda^{d/2}$\\
		$\delta$ &=& $\alpha/d$\\
		$\gamma$ &=& $\frac d{\alpha(\tau-1)}$\\[2mm]
		\hline
	\end{tabular}
\end{center}
\end{table}
\ \\[-15mm]
	\paragraph{Overview of the paper.}
	Before we prove our results, we describe the model in a more rigorous manner in Section \ref{secModel}. In Section \ref{secTransience}, we treat the transience regimes indicated in Theorem~\ref{thmRecurTrans}. The recurrence results are established in Section~\ref{secRecurrence}. {Finally, a few basic results from electrical network theory, which are used frequently in our proofs, are collected in the appendix.} Throughout, we use the notation $f(x)\asymp g(x)$ if $f,g$ are positive functions such that $f(x)/g(x)$ are bounded away from $0$ and $\infty$.
%
\pagebreak[3]

\section{The weight-dependent random connection model}\label{secModel}
\paragraph{Construction as a point process.} 	
We give now a formal construction of the weight-dependent  random connection model. To this end, we enhance the construction given in \cite[Sections 2.1 and 2.2]{HeydeHofstLastMatzk19} by additional  vertex marks.
For further constructions, see Last and Ziesche \cite{LastZiesc17} and Meester and Roy \cite{MeestRoy96}.  We construct the weight-dependent random connection model as a deterministic functional $\mathcal{G}_\phi(\xi)$ of a suitable point process $\xi$. 
Let $\eta$ denote a unit intensity $\Rd$-valued Poisson point process, which we can write as 
\begin{equation}
\eta=\{X_i\colon i\in \N\}; 
\end{equation}
such enumeration is possible by \cite[Corollary 6.5]{LastPenro18}. 
{In order to define random walks on the random connection model, it is convenient to have a designated (starting) vertex, and we therefore add an extra point $X_0=0$ when needed. This corresponds to working with a Palm version of the Poisson point process, which we denote by $\eta_0$.}\medskip

We further equip any Poisson point $X_i$, $i\in \N_0$, with an independent mark $S_i$ drawn uniformly from the interval \((0,1)\). 
{This defines a point process $\eta':=\{\X_i=(X_i,S_i)\colon i\in \N\}$ and $\eta_0':=\{\X_i=(X_i,S_i)\colon i\in \N_0\}$ on $\mathbb R^d\times (0,1)$, where here and throughout we write $\N_0=\N\cup\{0\}$.} 
Let $(\mathbb R^d\times (0,1))^{[2]}$ denote the space of all sets {$e\subset \mathbb R^d\times (0,1)$} with exactly two elements; these are the potential edges of the graph. 
We further introduce independent random variables $(U_{i,j}:i,j\in\N_0)$ uniformly distributed on the unit interval $(0,1)$ such that the double sequence $(U_{i,j})$ is independent of $\eta'$. Using $<$ for an arbitrarily fixed order on $\Rd$, we can now define 
\begin{equation}\label{eq:xidef}
\xi_0:=\big\{\big(\{(X_i,S_i),(X_j,S_j)\},U_{i,j}\big):X_i<X_j, i,j\in\N_0\big\}, 
\end{equation}
which is a point process on $(\mathbb{R}^d\times(0,1))^{[2]}\times(0,1)$. Similarly, we use $\xi$ to denote the configuration without the additional point at the origin. Mind that $\eta_0'$ might be recovered from~$\xi_0$. Even though the definition of $\xi_0$ 
{formally} depends on the ordering of the points of $\eta_0$, its distribution does not. 
We now define the weight-dependent random connection model $\mathcal{G}_\phi(\xi)$ as a deterministic functional of $\xi$; its vertex and edge sets are given as 
\begin{align}
V(\mathcal{G}_\phi(\xi)) &= \eta', \\
E(\mathcal{G}_\phi(\xi)) &= \{ \{\X_i,\X_j\}\in V(\mathcal{G}_\phi(\xi))^{[2]}:X_i<X_j, U_{i,j} \leq \phi(\X_i,\X_j), i,j\in\N\}. 	\label{eqEdgeSet}
\end{align}
{In this section we have written $\mathcal{G}_\phi(\xi)$ in order to make the dependence on the connection function~$\phi$ explicit; in the following sections we will often fix a kernel function as well as the parameters $\delta$ and~$\gamma$, and write, just as in Section~\ref{sec:intro}, $\mathcal{G}^\beta=\mathcal{G}^\beta(\xi)$ or $\mathcal{G}^\beta_0=\mathcal{G}^\beta(\xi_0)$ if we wish to add the vertex at the origin. Furthermore, we use the notation $U_{\x,\y}$ for $U_{i,j}$ whenever $\x=\X_i$ and $\y=\X_j$, $i,j\in\N_0$,
to denote the edge marks.

\paragraph{The FKG inequality.}	
We further obtain a correlation inequality for increasing events known as FKG-inequality. We call a measurable function $f$ defined on point processes on $(\R^d\times(0,1))^{[2]}\times(0,1)$ \emph{increasing} if it is increasing in the underlying point process $\eta$ with respect to set inclusion, 
decreasing with respect to
{vertex} marks 
and decreasing with respect to edge marks. 
Following arguments in \cite[Section 2.3]{HeydeHofstLastMatzk19}, we get for increasing functions $f_1,f_2$ that 
\begin{equation}\label{eq:FKG}
\begin{aligned}	\E[f_1(\xi) f_2(\xi)] & = \E \big[ \E[f_1(\xi)f_2(\xi) \mid \eta] \big] \geq \E \big[ \E[f_1(\xi)\mid \eta] \; \E[f_2(\xi)\mid \eta] \big]\\
& \geq \E[f_1(\xi)] \; \E[f_2(\xi)]. 
\end{aligned}
\end{equation} 
The first inequality uses the monotonicity properties of $\varphi$ (here we conditioned on the vertex set $\eta$), the second inequality is obtained through FKG for point processes, see e.g.\ \cite[Theorem 20.4]{LastPenro18}. Note that $\mathcal{G}_\phi(\cdot)$ itself is an increasing map with respect to the natural partial order on labelled graphs.
\pagebreak[3]

\paragraph{Percolation.}
{Since $\mathbf 0:=(X_0,S_0)\in V(\mathcal{G}_\phi(\xi_0))$,  we write $\{\mathbf{0}\leftrightarrow\infty\}$ for the event that the random graph $\mathcal{G}_\phi(\xi_0)$ contains an infinite self-avoiding path $(\x_1,\x_2,\x_3,\dots)$ of vertices with $\x_i\in V(\mathcal{G}_\phi(\xi_0))$, $i\in\N$, such that $\{\mathbf 0,\x_1\},\{\x_1,\x_2\},\{\x_2,\x_3\}\ldots\in E(\mathcal{G}_\phi(\xi_0))$, and we say that in this case the graph \emph{percolates}}. 
{We denote the {percolation probability} by 
	\[ \theta(\beta)=\P\big(\mathbf{0}\leftrightarrow\infty \text{ in } \mathcal{G}_0^{\beta}\big),\]
	if kernel and profile are fixed. {Note that by ergodicity, we have $\theta(\beta)>0$ if and only if there exists an infinite cluster in $\mathcal{G}^{\beta}$ almost surely and that $\theta(\beta)$ is non-decreasing in $\beta$ (recall that the infinite cluster is unique).}
	This allows us to define the critical percolation threshold as
	\begin{equation}\label{eqDefBetaC}
		\beta_c:=\inf\{\beta>0: \theta(\beta)>0 \}\ge0.
	\end{equation}
	{Consequently, we call the model \emph{supercritical}, if $\beta>\beta_c$ and \emph{critical} if $\beta=\beta_c$.} 
\paragraph{Random walks.}
	We recall that, {as $\gamma<1<\delta$}, the resulting graph $\mathcal{G}_\phi(\xi_0)$ is locally finite, i.e.\ 
	\[\sum_{\y\in V(\mathcal{G}_\phi(\xi_0))}\mathds{1}\{\{\x,\y\}\in E(\mathcal{G}_\phi(\xi_0))\} <\infty\qquad\text{ for all $\x\in V(\mathcal{G}_\phi(\xi_0))$ almost surely},\]
	cf.\ \cite{GracaGraueLuchtMorte19}. 
	Given  $\mathcal{G}_\phi(\xi_0)$ 
	we define the \emph{simple random walk} on the random graph $\mathcal{G}_\phi(\xi_0)$ as the discrete-time Markov process $(Y_n)_{n\in\N}$ which starts at $Y_0=\mathbf{0}$ and has transition probabilities 
	\[  \mathsf{P}^{\mathcal{G}_\phi(\xi_0)}(Y_n=\y\mid Y_{n-1}=\x)=\frac{\mathds{1}\big\{\{\x,\y\}\in E(\mathcal{G}_\phi(\xi_0))\big\}}{\sum_{\z\in V(\mathcal{G}_\phi(\xi_0))}\mathds{1}\big\{\{\x,\z\}\in E(\mathcal{G}_\phi(\xi_0))\big\}},\]
	for $\x,\y\in V(\mathcal{G}_\phi(\xi_0)), n\in\N.$
	We say that $\mathcal{G}_\phi(\xi_0)$ is \emph{recurrent} if 
	$$\mathsf{P}^{\mathcal{G}_\phi(\xi_0)}\big(\exists\, n\ge1: Y_n=\mathbf{0}\big)=1,$$ otherwise we say that it is \emph{transient}. 
		{Moreover, we say that the connected component of $v\in\eta$ in $\mathcal{G}_{\phi}(\xi)$ is transient (resp.\ recurrent) if $\mathcal{G}_{\phi}(\Theta_{-v}\xi)$ is transient  (resp.\ recurrent) in the sense above (where $\Theta_{v}\eta(A)=\eta(A+v)$ is the spatial shift).}
		
		
		\section{Transience}\label{secTransience}
		{In this section, we prove the transience statements of Theorem~\ref{thmRecurTrans}. In Section~\ref{sec:tranrob}, we focus on the robust regime, i.e.\ $\gamma>0$ for max-kernel, $\gamma>1/2$ for product kernel, and $\gamma>\delta/(\delta+1)$ for preferential attachment, sum  and min-kernel. For  $\delta<2$ and general $\gamma>0$ we need a modified argument which is given in Section~\ref{sec:trannorob}. }

		\subsection{Transience in the robust regime}\label{sec:tranrob}
		Proving transient behaviour for the robust case hinges on a renormalisation sequence argument devised in its original form by Berger~\cite{Berge02} for the case of long-range percolation, which we now briefly sketch. The key idea is to check for the existence of a subgraph of the infinite component that branches sufficiently quickly as one considers the graph at larger and larger scales, which in turn yields that the subgraph, and consequently the infinite component, is transient. We start by considering a large but finite box of \(\R^d\) and choosing the vertex with the smallest mark inside the box. {We call this vertex \emph{dominant}. When the mark of this dominant vertex is sufficiently small, we consider the box \emph{good}.
			We next construct the quickly branching subgraph
			by considering ever larger scales and interpreting good boxes of a smaller scale as vertices while ignoring the remaining boxes.
			At each stage, we tile the current box into disjoint boxes of the previous size and check which of these boxes are good.}
		Note that they occur independently and with the same probability for all smaller boxes. Then, the bigger box is called good whenever a sufficiently large proportion of the boxes contained in it is good, these boxes are sufficiently well connected with each other and there exists a vertex in the newly constructed cluster with mark smaller than some even smaller predetermined value.
		By repeating this procedure at larger and larger scales we obtain a \emph{renormalised graph sequence} that is contained in the infinite component of the graph and can be shown to be transient using a fairly straightforward conductance argument as in \cite{Berge02}, cf. Lemma \ref{prop:renorm}. \medskip

		Before we formalise this argument, we first introduce some technical results that we use to prove our claims. We observe that the preferential attachment and sum kernels are both dominated by the min kernel 
	{\[
		g^{\textrm{sum}}(s,t)\leq g^{\textrm{min}}(s,t)\quad\text{and}\quad g^{\textrm{pa}}(s,t)\leq g^{\textrm{min}}(s,t).
		\]
		Furthermore, $\gamma>\frac{\delta}{\delta+1}$ implies the robustness of the resulting graphs and in particular that $\beta_c=0$ for all the above mentioned kernels. Combined, this allows us to deduce the transience of the graphs obtained from the sum and preferential attachment kernels by only considering the min kernel and proving the transience thereof.}
	\noindent
	
	
	\paragraph{Two-connection probability bounds and other useful properties.}
	We start with
	the observation that in the case of the min kernel two vertices with sufficiently small marks are fairly likely to be connected via a vertex with a large mark, which we refer to as a \emph{connector}. This result is a variation of \cite[Lemma A.1]{HirscMonch19}.
	
	\begin{lemma}\label{lemma:two_connection}
		{Given two vertices $\x=(x,s)$, $\y=(y,t)$ of ${\mathcal G}^\beta$ with $s,t\leq 1/2$} define
		\[
		k(\x,\y) = s^{-\gamma}\rho\Big(\beta^{-1} t^\gamma \big(s^{-\frac{\gamma}{d}} + \abs{x-y}\big)^d\Big)
		\]
		and
		\[
		q(\x,\y) = \frac{\rho(\beta^{-1})\kappa_d}{2}\left( k(\x,\y) \vee k(\y,\x) \right),
		\]
		where $\kappa_d$ is the volume of the $d$-dimensional unit ball. Then, with probability at least 
		\[
		1-e^{-q(\x,\y)},
		\] there exists {$\mathbf{z}=(z,u)\in\eta\times(1/2,1)$} which is a common neighbour of both $\x$ and $\y$.
	\end{lemma}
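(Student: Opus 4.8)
The plan is to condition on the vertices $\x,\y$ and to count, via the Poisson process $\eta$ restricted to the mark interval $(1/2,1)$, how many points $\mathbf z=(z,u)$ simultaneously connect to $\x$ and to $\y$. Since the edge events $\{\x\sim\mathbf z\}$ and $\{\y\sim\mathbf z\}$ are conditionally independent given the vertex set, and since distinct potential connectors carry independent edge marks, the number of common neighbours $\mathbf z$ with $u\in(1/2,1)$ is, by the thinning/marking property of Poisson processes, itself a Poisson random variable. Its parameter is
\[
\Lambda(\x,\y)=\int_{1/2}^1\int_{\R^d}\rho\big(g(s,u)|x-z|^d\big)\,\rho\big(g(t,u)|y-z|^d\big)\,\de z\,\de u,
\]
and the claimed bound $1-e^{-q(\x,\y)}$ is exactly $\P(\text{Poisson}(\Lambda)\ge 1)$ once we show $\Lambda(\x,\y)\ge q(\x,\y)$. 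So the whole statement reduces to a lower bound on this integral.

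First I would lower-bound the integrand pointwise. For the min kernel, $g^{\rm min}(s,u)=\tfrac1\beta (s\wedge u)^\gamma$; since $s\le 1/2<u$ we have $g^{\rm min}(s,u)=\tfrac1\beta s^\gamma$ and likewise $g^{\rm min}(t,u)=\tfrac1\beta t^\gamma$, so both profile arguments are independent of $u$ and the $u$-integral just contributes a factor $1/2$. Thus
\[
\Lambda(\x,\y)\ \ge\ \tfrac12\int_{\R^d}\rho\big(\beta^{-1}s^\gamma|x-z|^d\big)\,\rho\big(\beta^{-1}t^\gamma|y-z|^d\big)\,\de z .
\]
Now I would use monotonicity of $\rho$ to decouple the two factors: restrict the integral to the ball $B=B(x,s^{-\gamma/d})$ of volume $\kappa_d s^{-\gamma}$. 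On $B$ we have $\beta^{-1}s^\gamma|x-z|^d\le\beta^{-1}$, so the first factor is at least $\rho(\beta^{-1})$; and for $z\in B$ we have $|y-z|\le|x-y|+s^{-\gamma/d}$, so the second factor is at least $\rho\big(\beta^{-1}t^\gamma(|x-y|+s^{-\gamma/d})^d\big)=:\rho(\cdots)$, which is precisely the quantity appearing inside $k(\x,\y)$. Multiplying the volume of $B$ by these two bounds gives $\Lambda(\x,\y)\ge \tfrac12\rho(\beta^{-1})\kappa_d\, s^{-\gamma}\rho\big(\beta^{-1}t^\gamma(s^{-\gamma/d}+|x-y|)^d\big)=\tfrac12\rho(\beta^{-1})\kappa_d\, k(\x,\y)$. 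By symmetry of the common-neighbour event in $\x,\y$ we may equally center the ball at $y$, obtaining the same bound with $k(\y,\x)$, hence $\Lambda(\x,\y)\ge q(\x,\y)$, and $\P(\exists\,\mathbf z)=1-e^{-\Lambda}\ge 1-e^{-q}$.

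The one genuine point requiring care — and the main obstacle — is justifying rigorously that the count of common connectors is Poisson with the stated intensity: one must invoke that, conditionally on $\eta$ (equivalently on $\eta'$), the collection of pairs $(U_{\x,\mathbf z},U_{\y,\mathbf z})_{\mathbf z\in\eta'}$ is i.i.d.\ and independent of $\eta'$, then apply the marking theorem to the marked point process of connectors restricted to $\R^d\times(1/2,1)$, and finally use that $\x=(x,s)$ and $\y=(y,t)$ themselves are not among the connectors since their marks are $\le 1/2$. A minor technical wrinkle is that in the Palm setting one of $\x,\y$ may be the added point $X_0=0$, but the conditional independence of edge marks is unaffected, so the argument goes through verbatim. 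Everything else is the elementary integral estimate above; I would present it compactly, citing \cite[Lemma A.1]{HirscMonch19} for the structure and spelling out only the changes needed to handle the extra mark variable and the min kernel.
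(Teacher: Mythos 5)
Your proposal is correct and follows essentially the same route as the paper: the paper also restricts to connectors $\mathbf z=(z,u)$ with $u>1/2$ lying in the ball $B(x,s^{-\gamma/d})$ (formally as a thinned Poisson subprocess $X_c^{\x}$ of the set of common neighbours), bounds the first profile factor below by $\rho(\beta^{-1})$ and the second by $\rho\bigl(\beta^{-1}t^\gamma(s^{-\gamma/d}+|x-y|)^d\bigr)$ via the triangle inequality, and obtains the $k(\y,\x)$ term by exchanging the roles of $x$ and $y$. Your observation that the only genuinely delicate step is the Poisson/marking justification for the connector count matches the paper's (brief) appeal to the thinning theorem.
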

	{If vertices \(\x\) and \(\y\) share a common neighbour \(\z\) as above, then we call the intermediate vertex $\mathbf{z}=(z,u)$ a \emph{connector}, and we say that \(\x\) and  \(\y\) are connected \emph{through a connector}. 
	}
	
	\begin{proof}
		Let {$\mathcal X_c$} denote the set of common neighbours $\z=(z,u)$  of $\x$ and~$\y$ with mark $u>1/2$, that is, the vertices which satisfy \[U_{\x,\z} \leq \rho(\frac{1}{\beta}s^\gamma u^{1-\gamma} \abs{x-z}^d) \text{ and } U_{\y,\z} \leq \rho(\frac{1}{\beta}t^\gamma u^{1-\gamma} \abs{y-z}^d).\] Consider now the set $\mathcal X_c^{\x}$ of vertices $(z,u)\in \mathcal X_c$ with $\abs{x-z}^d \leq s^{-\gamma}$ that satisfy $U_{\x,\z} \leq \rho(1/\beta)$ and \smash{$U_{\y,\z} \leq \rho(\frac{1}{\beta}t^\gamma u^{1-\gamma} \abs{y-z}^d)$}. 
		By the thinning theorem, $\mathcal X_c^{\x}$ forms a Poisson point process with total intensity
		\begin{equation}
			\begin{aligned}
				\int_{1/2}^1 \int_{B_{s^{-\gamma/d}}(x)}\rho(\beta^{-1})&\rho(\beta^{-1} t^{\gamma}\abs{{y}-z}^d)\, \textup{d}z\, \textup{d}u\\ 
				&\geq \frac{\rho(\beta^{-1})}{2}\int_{B_{s^{-\gamma/d}}(x)}\rho(\beta^{-1} t^{\gamma}(\abs{{x}-z} + \abs{x-y})^d)\, \textup{d}z\\
				&\geq \frac{\rho(\beta^{-1})\kappa_d}{2}s^{-\gamma}\rho(\beta^{-1} t^{\gamma}(s^{-\gamma/d} + \abs{x-y})^d).
			\end{aligned}\label{eq:lemma31}
		\end{equation}
		Hence,
		$
		\P(\mathcal X_c = \emptyset) \leq \P(\mathcal X_c^{\x} = \emptyset) \leq \exp\big( - \tfrac{\rho(\beta^{-1})\kappa_d}{2} k(\x,\y) \big).
		$
		Reversing the roles of $x$ and $y$ yields the stated result.
	\end{proof}\medskip
	
	We next state a technical result that allows us to draw conclusions on the mark distribution of the vertex with the smallest mark from a set of vertices, knowing that the largest mark among them is smaller than some given value. 
	
	\begin{lemma}\label{lemma:uniform}
		Let \((U_i)_{i
			\leq n}\) be a collection of independent on 
		$(0,1)$ uniformly distributed random variables. Then, for \(0\leq a<b\leq 1\), we have
		\begin{align}
			\P(\min_{i=1\dots n} U_i > a\,|\, \max_{i=1\dots n} U_i < b) \leq \e^{- na/b}.
		\end{align}
		Furthermore, for $U$ uniformly distibuted on $(0,1)$ and 
		\(x<y\), 
		\begin{align}\label{eq:uniform_cond}
			\P(U<x\,|\,U<y)=\P(yU<x).
		\end{align}
	\end{lemma}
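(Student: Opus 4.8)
The plan is to treat the two assertions separately; each reduces to an elementary computation with the uniform law together with the independence of the $U_i$, so no machinery beyond this is needed. For the first inequality I would argue that conditioning on the event $\{\max_{i\le n}U_i<b\}$ is the same as conditioning every coordinate $U_i$ to the interval $(0,b)$, and that this operation keeps the $U_i$ independent.

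Concretely, using independence and the elementary identities $\P(a<U_i<b)=b-a$ and $\P(U_i<b)=b$ (valid since $0\le a<b\le 1$), one gets
\[
\P\Big(\min_{i\le n}U_i>a \,\Big|\, \max_{i\le n}U_i<b\Big)
=\frac{\P\big(\forall i\le n:\ a<U_i<b\big)}{\P\big(\forall i\le n:\ U_i<b\big)}
=\frac{(b-a)^n}{b^n}=\Big(1-\frac ab\Big)^{n}.
\]
The stated bound then follows from the inequality $1-x\le\e^{-x}$ applied with $x=a/b\in[0,1)$.

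For the second identity, since $0<x<y$ we have $x/y\in(0,1)$, hence on the one hand $\P(U<x\mid U<y)=\P(U<x)/\P(U<y)=x/y$, and on the other hand $\P(yU<x)=\P(U<x/y)=x/y$, so the two expressions agree. I do not anticipate any genuine obstacle here: this is a warm-up lemma, and the only step worth spelling out is the first equality in the display above, i.e.\ that imposing an upper bound on the maximum preserves independence and rescales each marginal to a uniform on the shorter interval, which is immediate from the factorisation of the joint law of the $U_i$ and the ratio $(b-a)/b$ of the conditioned marginal.
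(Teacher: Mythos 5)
Your proposal is correct and follows essentially the same route as the paper: both compute $\P(\min_i U_i>a\mid \max_i U_i<b)=(1-a/b)^n$ via independence of the $U_i$ (the paper phrases this through the single-variable fact $\P(U>a\mid U<b)=1-a/b$, you through the ratio of joint probabilities) and then apply $1-x\le \e^{-x}$, and the second identity is handled by the same direct computation. No gaps.
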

	
	\begin{proof}
		In order to prove the first bound, we use the simple fact that \(\P(U>a\,|\,U<b)=1-\frac{a}{b}\).
		Since \(1-x\leq \e^{-x}\), it is true that
		\begin{align*}
			\P(\min_{i=1\dots n} U_i > a\,|\, \max_{i=1\dots n} U_i < b) &=\big(1-\tfrac{a}b\big)^n\leq\e
			^{-na/b},
		\end{align*}
		which proves the claim. 
		The second statement of the lemma follows trivially from
		\begin{align*}
			\P(U<x\,|\,U<y)=\frac{\P(\{U<x\} \cap
				\{U<y\})}{\P(U<y)}=\frac{x}{y}=\P(yU<x).
		\end{align*}
		\ \\[-11mm]
	\end{proof}

	\paragraph{Proof of transience.}
	
	We now formalise the definition of a renormalised graph sequence discussed earlier in this section.
	
	\begin{definition}\label{def:grs} 
		We say that the graph $G=(V,E)$ is \emph{renormalised for the sequence} $(C_n)_{n\in\N}$ if we can construct an infinite sequence of subgraphs of $G$ such that 
		{\begin{itemize}
				\item the vertices of the \emph{$l$-stage subgraph} ($l\ge1$) are labelled by 
				$$V_{l}(j_l, \dots, j_1) \mbox{  for all $j_k \in \{1,\dots, C_k\}$ with $k=1,\dots,l$,} $$ 
				\item for \(l\geq m\) we set
				$$V_l(j_l,\dots,j_m)=\bigcup_{\substack{1\leq u_k\leq C_k\\ {\text{for } k=1\dots,m-1}}}V_l(j_l,\dots,j_m,u_{m-1},\dots u_{1}),$$
				\item  for every $l > m >2$, every $j_l,\dots,j_{m+1}$, and all pairs  $u_m, w_m \in \{1,\dots,C_m\}$ and $u_{m-1}, w_{m-1} \in \{1, \dots, C_{m-1}\}$ there is an edge in $G$ between a vertex in {the collection} $V_l(j_l, \dots, j_{m+1}, u_m, u_{m-1})$ and a vertex in {the collection} $V_l( j_l, \dots, j_{m+1}, w_m, w_{m-1}).$
				\item for completeness, a $0$-stage subgraph is a single vertex.
		\end{itemize}}
	\end{definition}
	\begin{figure}[!h]
		\begin{tikzpicture}[yscale=1.5, xscale=1.75]
			\draw [webgreen, dashed] plot [smooth cycle, tension=0.8] coordinates {(-2.5,-0.5) (-3,3) (3,3.5) (2.5, 0)};
			\draw[webgreen] (2,4) node{$V_n(1)$};
			
			\draw [red, dashed] plot [smooth cycle, tension=0.8] coordinates {(-1,0) (-3,2) (-1.5,3) (-0.5,2.5) (-1,1.5) (-0.7,0.8)};
			\draw[red] (-1,3.2) node{$V_n(1,1)$};
			\draw [red, dashed] plot [smooth cycle, tension=0.8] coordinates {(0.5,0) (3,1.6) (2.5,3) (1.5,2.5) (0.5,1.5) (0.3,0.8)};
			\draw[red] (2.2,3.2) node{$V_n(1,2)$};
			
			\draw [blue, dashed] plot [smooth cycle, tension=0.8] coordinates {(-1.2,0.3) (-2,1.2) (-1.2,1)};
			\draw[blue] (-2.3,2.5) node{$V_n(1,1,1)$};
			\draw [blue, dashed] plot [smooth cycle, tension=0.8] coordinates {(-1.5,1.5) (-2.5,2.5) (-1,2.5)};
			\draw[blue] (-2,1.2) node{$V_n(1,1,2)$};
			
			\draw [blue, dashed] plot [smooth cycle, tension=0.8] coordinates {(0.5,0.3) (2,1.2) (1.2, 1.6)};
			\draw[blue] (3,2) node{$V_n(1,2,1)$};
			\draw [blue, dashed] plot [smooth cycle, tension=0.8] coordinates {(1.5,2) (2.8,1.8) (2.1, 2.7)};
			\draw[blue] (2,1.3) node{$V_n(1,2,2)$};
			
			\draw[fill=black] (-1.4,0.8) circle (1pt);
			\draw[fill=black] (1,1) circle (1pt);
			\draw[fill=black] (-1.6,0.9) circle (1pt);
			\draw[fill=black] (2,2) circle (1pt);
			\draw (-1.4,0.8) to [bend left=20] (1,1);
			\draw (-1.6,0.9) to [bend left=20] (2,2);

			\draw[fill=black] (-1.3,2.1) circle (1pt);
			\draw[fill=black] (1.2,1.4) circle (1pt);
			\draw[fill=black] (-1.8,2.2) circle (1pt);
			\draw[fill=black] (2.3,2.4) circle (1pt);
			\draw (-1.3,2.1) to [bend left=20] (1.2,1.4);
			\draw (-1.8,2.2) to [bend left=20] (2.3,2.4);
			
			\draw[fill=black] (-1.25,0.55) circle (1pt);
			\draw[fill=black] (-1.6,2.05) circle (1pt);
			\draw (-1.25,0.55) to [bend right=20] (-1.6,2.05);
			
			\draw[fill=black] (1.4,1.2) circle (1pt);
			\draw[fill=black] (1.8,2.2) circle (1pt);
			\draw (1.8,2.2) to [bend right=20] (1.4,1.2);

		\end{tikzpicture}
		\caption{A renormalized graph sequence (stages \textcolor{blue}{$n-2$}, \textcolor{red}{$n-1$} and \textcolor{webgreen}{$n$}), where $C_n=C_{n-1}=C_{n-2}=2$
			{
				and $C_{n-3}=3$.} For $n=4$, the individual points in the picture can be understood as single vertices, whereas for $n>4$ they represent collections of vertices $V_n(1,1,1,1)$, $V_n(1,1,1,2)$, etc.}
		\label{fig:bags}
	\end{figure}
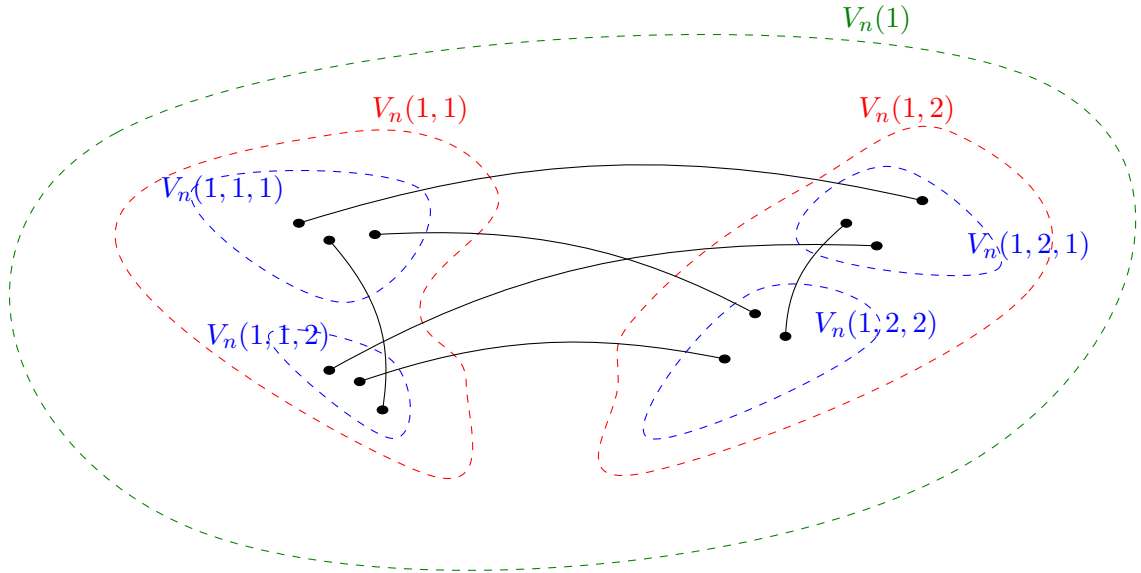
	
	{We may think of a renormalised graph as having a recursive structure: vertices are forming the 0-stage subgraphs, and every $n$-stage subgraph consists of a number 
		$C_n$ of $(n-1)$-stage subgraphs with the property that each pair of $(n-2)$-stage subgraphs in any of these is linked by an edge; see Fig.\ \ref{fig:bags}.} 
	
	\begin{lemma}[{\cite[Lemma 2.7]{Berge02}}]\label{prop:renorm}
		A graph renormalized for the sequence $(C_n)_{n\in\N}$ is transient if $\sum_{n=1}^\infty C_n^{-1}<\infty$.
	\end{lemma}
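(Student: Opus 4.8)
The plan is to reduce the claim to the existence of a finite-energy flow to infinity and to build such a flow from the hierarchical structure of Definition~\ref{def:grs}. By that definition, a graph $G$ renormalised for $(C_n)_{n}$ contains an increasing sequence of finite subgraphs $G_1\subseteq G_2\subseteq\cdots$, where $G_l$ is the $l$-stage subgraph and $G_{l-1}$ is identified with one of the $C_l$ copies of an $(l-1)$-stage subgraph sitting inside $G_l$; set $G_\infty:=\bigcup_l G_l\subseteq G$. By Rayleigh monotonicity it suffices to show that $G_\infty$ is transient, and by the standard criterion from electrical network theory recalled in the appendix this follows once we exhibit a unit flow $\theta$ from a fixed vertex $o$ to infinity of finite energy $\mathcal{E}(\theta)=\sum_e\theta(e)^2$.

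First I would take $o$ to be the vertex carried by the ``all-ones'' tower $V_1(1)\subseteq V_2(1,1)\subseteq\cdots$ and construct $\theta$ scale by scale, maintaining the invariant that after the $l$-th step the unit current is supported, in a roughly balanced way, on the vertices that $G_l$ contributes, and that the $l$-th step uses only edges that are new at scale $l$. The step from $G_{l-1}$ to $G_l$ must move current out of the single copy of the $(l-1)$-stage subgraph that currently carries it into the remaining $C_l-1$ copies inside the $l$-stage block; this is exactly where the connectivity clause of Definition~\ref{def:grs} enters, used with index $m=l$ (which is visible inside $G_\infty$, though not inside $G_l$ alone, as it is certified only at stage $l+1$): it supplies, between every pair of $(l-2)$-stage sub-blocks of the $l$-stage block, a connecting edge, so that contracting these sub-blocks produces a complete graph on $\Theta(C_lC_{l-1})$ nodes through which the current is redistributed among the $C_l$ copies. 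As distinct scales use disjoint edges, $\mathcal{E}(\theta)=\sum_{l\ge1}\mathcal{E}_l$, with $\mathcal{E}_l$ the energy of the $l$-th step.

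It then remains to see that $\mathcal{E}_l=O(1/C_l)$. The current passing from the ``old'' block to any one ``new'' copy at scale $l$ is of order $1/C_l$, it is routed across a complete-graph-like structure on $\Theta(C_l)$ nodes, and summing over the $O(C_l^2)$ edges involved, each carrying $O(1/C_l)$, yields $\mathcal{E}_l=O(1/C_l)$; note that it is precisely the complete-graph connectivity that keeps this ``mixing cost'' down to $O(1/C_l)$ rather than of order one. Hence $\mathcal{E}(\theta)=\sum_l O(1/C_l)<\infty$ by hypothesis, so $G_\infty$, and therefore $G$, is transient. \textbf{The main obstacle} lies in the bookkeeping of the last two steps: because the connectivity in Definition~\ref{def:grs} only links $(m-2)$-stage sub-blocks and is certified two stages up, one must check that this staggered structure nonetheless allows an essentially even split at \emph{every} scale, arrange the partial flows so that Kirchhoff's node law holds everywhere and no edge is reused, and --- most delicately --- ensure that the cost of gathering and re-spreading current \emph{within} the blocks at scale $l$ does not hide a dependence on the smaller branching numbers and so does not spoil $\mathcal{E}_l=O(1/C_l)$. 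This is the content of Berger's argument in~\cite{Berge02}, to which we refer for the details.
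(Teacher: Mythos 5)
The paper does not prove this lemma at all: it is imported verbatim from Berger \cite{Berge02} and used as a black box, and your sketch of the underlying flow--energy argument (build a unit flow to infinity scale by scale, redistribute at scale $l$ through the complete graph on the $C_lC_{l-1}$ many $(l-2)$-stage blocks, sum the per-scale energies) is exactly the route of that source, with the hard bookkeeping explicitly deferred to it. Your identification of where the connectivity clause enters, including the point that the stage-$l$ complete graph is only certified once the $l$-stage block is viewed inside an $(l+1)$-stage block (which is harmless for $G_\infty=\bigcup_l G_l$), is also correct. The one computation you actually write out is internally inconsistent, though: $O(C_l^2)$ edges each carrying flow $O(1/C_l)$ would give energy $O(C_l^2)\cdot O(C_l^{-2})=O(1)$ per scale, not $O(1/C_l)$ (and would also push a total of $O(C_l)$ units of flow across the cut, contradicting that the flow is a unit flow). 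The correct accounting is that all of the unit flow passes through the single $l$-stage block of the increasing union, an edge from an $(l-2)$-stage block of the source $(l-1)$-stage block to a target $(l-2)$-stage block carries at most the product of the source's load and the target's share, i.e.\ $O\bigl(1/(C_lC_{l-1}^2)\bigr)$, and at most $C_lC_{l-1}^2$ such edges are used, whence $\mathcal{E}_l=O\bigl(1/(C_lC_{l-1}^2)\bigr)=O(1/C_l)$. With that correction the sketch is sound and matches Berger's proof.
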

	
	{We now prove transience for the min kernel when \(\gamma>\frac{\delta}{1+\delta}\) (i.e., we are in the robust case), and explain afterwards the modifications for the other kernels. }
	
	\begin{proposition}\label{prop:transience}
		
		{Let $\mathcal{G}^{\beta}$ be the weight-dependent random connection model with min kernel and \(\gamma>\frac{\delta}{1+\delta}\). Then, for all $\beta>0$, the infinite component of $\mathcal{G}^{\beta}$ is transient.}
	\end{proposition}
	
	
	Our proof is inspired by the arguments in \cite[Prop.\ 5.3]{HeydeHulshJorri17}. However, our proof has the necessary coarse graining ideas  built into the construction and therefore yields the result for all values of~$\beta$ directly.
	
	\begin{proof}
		Begin by choosing a positive constant $\varepsilon$ smaller than $2(\delta+1)\gamma d-2\delta d$, which is possible since $\gamma>\frac{\delta}{\delta+1}$ implies this expression is strictly positive.
		Next, let $n_*$ be a large constant whose value we will fix later in the proof. Define, 
		for $n\geq 1$,
		\begin{align}\label{eq:un}
			u_n&=\frac{1}{c_1}(n_*+n)^{-\frac{\varepsilon}{\gamma(\delta+1)}}2^{-\frac{(n_*+n)d\delta}{n_*\gamma(\delta+1)}}\Big(\tfrac{(n_*+n)!}{n_*!}\Big)^{-\frac{2d\delta}{\gamma(\delta+1)}},
		\end{align}
		where \smash{$c_1=(\frac{1}{2}\kappa_d\rho({1}/{\beta})\beta^{\delta-1} d^{-d\delta/2})^{-1/\gamma(\delta+1)}$} is a positive constant, and $\kappa_d$ is the volume of the $d$-dimensional unit ball.
		Let $B$ be the event that in a given box of side-length $D_1:=2(n_*+1)^2$ there exists a vertex with mark smaller than $u_1$ and define $p_B$ to be the probability of this event. A simple calculation using the properties of Poisson point processes yields that there exists a positive constant $c$ for which
		\begin{align*}
			p_B&=1-\exp\{-u_1D_1^d\}
			\geq 1-\exp\Big\{-c n_*^{-\frac{\varepsilon}{\gamma(\delta+1)}-\frac{2d\delta}{\gamma(\delta+1)}+2d}\Big\}
		\end{align*}
		and, by choice of $\varepsilon$, the probability $p_B$ can therefore be made arbitrarily close to $1$ by choosing~$n_*$ large. Although we will not explicitly highlight this throughout most of the proof, we work from here on under the assumption that $n_*$ is sufficiently large for $p_B$ to be close to $1$ (say, greater than $3/4$) so that the quantities where $p_B$ appears are all of the same order of magnitude as if $p_B$ was simply equal 1. 
		We define for $n\geq 2$ the sequences
		\begin{align*}
			C_n&:=p_B(n_*+n)^{2d},\quad D_n:=2(n_*+n)^2.
		\end{align*}
		Next, we partition \(\R^d\) into disjoint boxes of side length \(D_1\); we call them \emph{\(1\)-stage boxes}. We now define the renormalization procedure. We partition \(\R^d\) again, grouping \(D_2^d\) \(1\)-stage boxes together to form \emph{\(2\)-stage boxes}. We continue like this for all \(n\geq 3\), so that the \(n\)-stage boxes represent a partitioning of \(\R^d\) into boxes of side length \(\prod_{i=1}^n D_i\).
		\medskip
		
		We now define what it means for a box to be ``good'' or ``bad'', starting with \(1\)-stage boxes. 
		We declare a \(1\)-stage box as \emph{good} if it contains at least one vertex with mark smaller than \(u_1\), and for each good \(1\)-stage box we declare the vertex with the smallest mark to be \emph{\(1\)-dominant}. We define $L_1(v)$ to be the event that the $1$-stage box centered around $v$ is good, and omit $v$ when considering the box containing the origin.
		We declare a \(2\)-stage box as \emph{good} if it contains at least $C_2$ good \(1\)-stage boxes and among the corresponding \(1\)-dominant vertices at least one has mark smaller than $u_2$. As before, we define $L_2(v)$ to be the event that the $2$-stage box centered around $v$ is good, and omit $v$ when considering the box containing the origin.\medskip
		
		For \(n\geq 3\), let $\mathcal{Q}$ be the set of all good $(n-1)$-stage subboxes of the $n$-stage box $Q$. We declare \(Q\) as \emph{good} if the following three conditions hold:
		\begin{description}
			\item[(E)] $\mathcal{Q}$ contains at least $C_n$ boxes;\smallskip
			\item[(F)] for any pair of  boxes $Q', Q''\in \mathcal Q$
			every pair of distinct \((n-2)\)-dominant vertices in \(Q'\) 
			and~\(Q''\) is connected through a connector;
			\smallskip
			\item[(G)] at least one of the boxes $Q'\in\mathcal Q$
			contains an \((n-1)\)-dominant vertex with mark no larger than \(u_n\).
		\end{description}
		We declare for each good \(n\)-stage box the vertex with the smallest mark as the \(n\)-dominant vertex. 
		We now define \(E_n(v)\), \(F_n(v)\) and \(G_n(v)\) to be the events that conditions \((E)\), \((F)\) and \((G)\) hold for the \(n\)-stage box containing vertex \(v\). When considering the origin, we omit the vertex in this notation. We do the same for the event \(L_n(v)\), which we define to be the event that the corresponding \(n\)-stage box is good. 
		\medskip
		
		Due to translation invariance it is enough to show that
		\begin{align*}
			\P\Big(\bigcap_{n=1}^\infty L_n\Big)>0
		\end{align*}
		in order to prove our claim.
		First, note that 
		\begin{align*}
			&\P\Big(\bigcap_{n=1}^\infty L_n\Big)
			=1-\P\Big(\bigcup_{n=1}^\infty L_n^c\Big)
			\geq 1-\sum_{n=1}^\infty\P(L_n^c).
		\end{align*}
		Therefore, it suffices to show that the sum on the right can be made smaller than $1$.
		\pagebreak[3]
		
		For $n=1$, we already have that $\P(L_n^c)=1-p_B$ which can be made arbitrarily small by setting $n_*$ large. For $n\geq 2$ we decompose the event $L_n^c$ with respect to $E_n$, $F_n$ and $G_n$ and obtain that
		\begin{align}\label{eq:lnc}
			\P(L_n^c)\leq\P(E_n^c)+\P(F_n^c\,|\,E_n)+\P(G_n^c\,|\,E_n),
		\end{align}
		where we set $\P(F_2^c\,|\,E_2)$ as $0$ in order to simplify notation.\medskip
		
		We \emph{first} bound \(\P(F_n^c\,|\,E_n)\) for $n\geq 3$. 
		To do that, let $\mathcal{X}$ be the restriction of the marked point process $\eta'$ to vertices with marks smaller than $1/2$. Then, given $\mathcal{X}$,  the event $E_n$ and,
		for two vertices in $\mathcal{X}$,
		the existence of a vertex in $\eta'\setminus\mathcal{X}$  connecting them are increasing events depending only on the edge marks and on the marked vertices of $\eta'\backslash\mathcal{X}$. Thus, given $\mathcal{X}$, they are positively correlated.
		Next, note that any two vertices in the same \(n\)-stage box are at most
		\begin{align*}
			\sqrt{d}\prod_{k=1}^n D_k=\sqrt{d}2^n\Big(\tfrac{(n_*+n)!}{n_*!}\Big)^2
		\end{align*}
		away from each other and similarly any $(n-2)$-dominant vertices have mark smaller than $u_{n-2}$.
		Therefore, using~Lemma \ref{lemma:two_connection}  the conditional probability given $\mathcal{X}$ that two $(n-2)$-dominant vertices of $\mathcal{X}$ belonging to the same $n$-stage box are not connected through a connector is smaller than
		\begin{align*}
			\exp\Big\{-\tfrac{1}{2}\rho\Big(\tfrac{1}{\beta}\Big)\kappa_d\beta^{\delta}u_{n-2}^{-\gamma(\delta+1)}\Big((s\vee t)^{-\gamma/d}+\sqrt{d}2^n\big(\tfrac{(n_*+n)!}{n_*!}\big)^2\Big)^{-d\delta}\Big\},
		\end{align*}
		where we highlight that the bound is uniform in the locations of the two vertices and that the \((s\vee t)^{-\gamma/d}\) term is unbounded from above.
		We therefore consider the cases when \(s\wedge t\) is bigger or smaller than \smash{$(\sqrt{d}2^n((n_*+n)!/{n_*!})^2)^{-d/\gamma}$}
		and obtain with the help of Lemma \ref{lemma:uniform} that the conditional probability given $\mathcal{X}$ of two $(n-2)$ dominant vertices belonging to the same $n$-stage box being connected to a common vertex of $\eta'\backslash\mathcal{X}$ is smaller than
		\begin{align*}
			\exp\Big\{-\tfrac{1}{2}\rho({\beta}^{-1})\kappa_d\beta^{\delta}u_{n-2}^{-\gamma(\delta+1)}d^{-\frac{d\delta}{2}}2^{-nd\delta}(\tfrac{(n_*+n)!}{n_*!})^{-2d\delta}\Big\}+\exp\{-cn\log(n_*+n)\}\\
			\leq\exp\{-\beta (n_*+n)^\varepsilon\}\vee \exp\{-\tilde cn\log(n_*+n)\},\end{align*}
		where \(\tilde c\) is a positive constant and we used that \(\log (n!)\asymp n \log n\). 
		Next, there are 
		\begin{align*}
			{D_n^d D_{n-1}^d\choose 2 }<4^d(n_*+n)^{4d}
		\end{align*}
		possible pairs of {distinct}  \((n-2)\)-stage boxes in an \(n\)-stage box and therefore at most \(4^d(n+1)^{4d}\) connections via a connector vertex. Conditionally on $\mathcal{X}$, the events $F_n$ and $E_n$ are increasing with respect to points, edges and vertex marks which are not determined by the configuration~$\mathcal{X}$. Consequently, conditionally on $\mathcal{X}$, $F_n^c$ and $E_n$ are negatively correlated. By taking the union bound we thus obtain 
		\begin{align}\label{eq:fnc}
			\P_\mathcal{X}(F_n^c\,|\, E_n)\leq\exp\{d\log(4)+4d\log(n_*+n)-\beta (n_*+n)^\varepsilon\wedge \tilde cn\log(n_*+n)\},
		\end{align}
		where we write $\P_\mathcal{X}$ for the conditional expectation given $\mathcal{X}$. 
		Taking the expectation and using that this bound is uniform in $\mathcal{X}$ yields the same bound for the unconditional probability.\medskip
		
		Next, we bound \(\P(G_n^c\,|\,E_n)\). For some positive constant \(c_2\) we write
		\begin{align*}
			\tfrac{u_n}{u_{n-1}}&=\big(\tfrac{n_*+n}{n_*+n-1}\big)^{-\frac{k}{\gamma(\delta+1)}}2^{-\frac{d\delta}{n_*\gamma(\delta+1)}}(n_*+n)^{-\frac{2d\delta}{\gamma(\delta+1)}}
			\geq c_2(n_*+n)^{-\frac{2d\delta}{\gamma(\delta+1)}}
		\end{align*}
		and therefore using Lemma \ref{lemma:uniform} we get
		\begin{align}
			\P(G_n^c\,|\,E_n)&\leq \exp\Big\{-C_n\tfrac{u_n}{u_{n-1}}\Big\}\leq \exp\Big\{-c_2p_B(n_*+n)^{\frac{2d(\gamma(\delta+1)-\delta)}{\gamma(\delta+1)}}\Big\},\label{eq:gnc}
		\end{align}
		where the exponent of the term \(n_*+n\) is positive whenever $\gamma>\frac{\delta}{\delta+1}$. To keep things concise, we will refer to this exponent as $k$ from now on.\medskip
		
		
		
		In order to bound the remaining term of \eqref{eq:lnc} we will again use $\mathcal{X}$. Crucially, given $\mathcal{X}$ the events $L_{n-1}(v)$ are all positively correlated. To see why, note that for any fixed configuration of vertices from $\mathcal{X}$, the event $L_{n-1}(v)$ depends only on the realization of the edge marks and the marked vertices in $\eta'\setminus\mathcal{X}$, i.e. the \emph{connector} vertices. Since the number of required edges and vertices from $\eta'\backslash \mathcal{X}$ does not change for a given realization of $\mathcal{X}$, all $L_{n-1}(v)$ are increasing events and therefore positively correlated. Consequently, given $\mathcal{X}$ and using a standard coupling argument, the collection of events  $L_{n-1}(v)$ stochastically dominates events with the same marginal distribution but sampled independently.
		We obtain
		\[
		\mathbb{P}(E_n^c)=\E[\mathbb{P}_\mathcal{X}(E_n^c)]\leq \E[\mathbb{P}_\mathcal{X}(\tilde E_n^c)]=\mathbb{P}(\tilde E_n^c),
		\]
		where we denote by $\tilde E_n^c$ the event that there are at most $C_n$ good $(n-1)$-stage boxes in the $n$-stage box, with each event $L_{n-1}(v)$ sampled independently. We can therefore proceed as if the events $L_{n-1}(v)$ were independent.
		We use this by invoking Chernoff's bound that says that if \(X\sim\operatorname{Bin}(m,q),\Theta\in(0,1)\), then $$\P(X<(1-\Theta)mq)\leq\exp\big\{-\tfrac{1}{2}\Theta^2mq\big\}.$$ 
		For \(q=\mathbb{P}(L_{n-1})\), \(m=D_n^d\) and \smash{\(\Theta=1-\frac{C_n}{D_n^d}\frac{1}{\mathbb{P}(L_{n-1})}\)} this leads to
		\begin{align}\label{eq:enc}
			\P(E_n^c)&\leq\exp\Big\{-2^{d-1}\Big(1-\tfrac{p_B}{2^d}\tfrac{1}{\P(L_{n-1})}\Big)^2\P(L_{n-1})(n_*+n)^{2d}\Big\},\nonumber\\
			&=\exp\{-2^{-d-1}(2^d\P(L_{n-1})-p_B)^2(n_*+n)^{2d}\P(L_{n-1})^{-1}\}\nonumber\\
			&\leq \exp\{-2^{-d-1}(2^d\P(L_{n-1})-p_B)^2(n_*+n)^{2d}\},
		\end{align}
			where we used the definitions of \(C_n\), \(D_n\) and the definition of \(E_n\) itself. This bound is only valid if $\P(L_{n-1})$ is sufficiently large for $\Theta$ to be smaller than $1$. For $n=2$, 
			this is satisfied by construction, since $\Theta$ simplifies to $1-\tfrac{1}{2^d}$. For $n\geq 3$ this follows inductively from the argument below.
			Combining (\ref{eq:enc}), (\ref{eq:fnc}) and (\ref{eq:gnc}) into (\ref{eq:lnc}), we obtain the recursive inequality
			\begin{align*}
				\P(L_n^c)&\leq\exp\{d\log(4)+4d\log(n_*+n)-\beta (n_*+n)^\varepsilon\wedge \tilde c n\log(n_*+n)\}\\
				&\quad+\exp\{-c_2p_B(n_*+n)^k\}+\exp\{-2^{-d-1}(2^d\P(L_{n-1})-p_B)^2(n_*+n)^{2d}\}.
			\end{align*}
			By setting $n_*$ large enough we get for $n\geq 2$ that
			\begin{align*}
				\P(L_n^c)&\leq 2\exp\{-c_3p_B\beta(n_*+n)^\varepsilon\wedge(n_*+n)^k\wedge n\log(n_*+n)\}\\&\quad+\exp\{-2^{-d-1}(2^d\P(L_{n-1})-p_B)^2(n_*+n)^{2d}\}.
			\end{align*}
			Define now the sequence $\ell_n:=\frac{1}{3}(n+1)^{-3/2}$  and observe that \(\sum_{i=1}^{\infty}\ell_i<1\). 
			We then obtain inductively for \(n \geq 2\) that
			\begin{align*}
				\P(L_n^c)&\leq 2\exp\{-c_3p_B\beta(n_*+n)^\varepsilon\wedge(n_*+n)^k\wedge (n_*+n)\log(n_*+n)\}\\&\quad+\exp\{-2^{-d-1}(2^d(1-\tfrac{1}{3}n^{-3/2})-p_B)^2(n_*+n+1)^{2d}\}\\
				&\leq\tfrac{1}{3}(n+1)^{-3/2}
				{=\ell_n,}
			\end{align*}
			where we finally take $n_*$ large enough for the second inequality to hold and use implicitly that $p_B$ approaches $1$ as $n_*$ is made large. This also yields that $\P(L_1^c)=1-p_B<\ell_1$.
			{Altogether, for $n^*$ large,} this gives the existence of the renormalized graph sequence with positive probability, and by Lemma \ref{prop:renorm} the result follows.
		\end{proof}
		
		This concludes the proof of Theorem \ref{thmRecurTrans} for the min kernel when \(\gamma>\frac{\delta}{\delta+1}\).
		As outlined at the beginning of the section, the preferential attachment and sum kernels dominate the min kernel and they are therefore, ceteris paribus, also transient.
		We now focus on the weight-dependent random connection model with the remaining kernels of Theorem \ref{thmRecurTrans}. The main difference is that these kernels lead to transient graphs by using direct connections between the dominant vertices, without having to rely on connector vertices. Therefore, we use the same strategy as for the min kernel but using direct connection probabilities instead of the two-connection probabilities from Lemma \ref{lemma:two_connection}. We first look at the product kernel, {for the corresponding lattice model, \emph{scale-free percolation}, the analogous result was obtained in \cite{HeydeHulshJorri17}.}
		
		\begin{proposition}\label{prop:product}
			{Consider the weight-dependent random connection model $\mathcal{G}^\beta$ with product kernel, where \(\gamma>\frac{1}{2}\) and \(\beta>0\). 
				Then $\mathcal{G}^\beta$ is transient almost surely.}
		\end{proposition}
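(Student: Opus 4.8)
The plan is to follow the proof of \Cref{prop:transience} almost verbatim, replacing the two-connection estimate of \Cref{lemma:two_connection} by the \emph{direct} connection probability for the product kernel: for vertices $\x=(x,s)$, $\y=(y,t)$ with $s,t\le u$ and $|x-y|\le R$, monotonicity of $\rho$ together with \eqref{eq:rhoass2} (which yields $\rho(r)\ge\tilde c\,(1\wedge r^{-\delta})$ for some $\tilde c>0$) gives
\[
\phi(\x,\y)=\rho\big(\tfrac1\beta s^\gamma t^\gamma|x-y|^d\big)\;\ge\;\rho\big(\tfrac1\beta u^{2\gamma}R^d\big)\;\ge\;\tilde c\,\big(1\wedge\beta^\delta u^{-2\gamma\delta}R^{-d\delta}\big).
\]
Accordingly, condition~(F) in the renormalisation is replaced by: for every pair of good $(n-1)$-stage sub-boxes $Q',Q''$ of the $n$-stage box, some $(n-2)$-dominant vertex of $Q'$ is joined by an edge \emph{directly} to some $(n-2)$-dominant vertex of $Q''$ (no connector vertex is used, so the renormalised subgraph of $\mathcal G^\beta$ is built purely from dominant vertices and the edges between them, which is still admissible for \Cref{def:grs}). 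Everything else in the construction — the partition into $n$-stage boxes, the notions of good box and dominant vertex, the decomposition $\P(L_n^c)\le\P(E_n^c)+\P(F_n^c\mid E_n)+\P(G_n^c\mid E_n)$ of \eqref{eq:lnc}, and the handling of dependencies by conditioning on the marked vertices with mark below $1/2$ and appealing to the FKG inequality \eqref{eq:FKG} (here even simpler, since there are no connector vertices to integrate out, so only the edge marks remain random) — is kept as in \Cref{prop:transience}.

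The one genuine change is the recalibration of the sequence $u_n$. Since two $(n-2)$-dominant vertices lying in a common $n$-stage box are at distance at most $R_n:=\sqrt d\prod_{k=1}^n D_k$ and have marks at most $u_{n-2}$, the estimate above connects them with probability comparable to $\beta^\delta u_{n-2}^{-2\gamma\delta}R_n^{-d\delta}$. One therefore takes $u_n$ of the form of \eqref{eq:un} but with the factorial exponent $\tfrac{2d\delta}{\gamma(\delta+1)}$ replaced by $\tfrac d\gamma$, and the exponential and polynomial corrections adjusted correspondingly (with $c_1$ absorbing $\rho(1/\beta),\kappa_d,\beta^{\delta-1},d$ and $\varepsilon>0$ small). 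Rerunning the estimates of \Cref{prop:transience} then gives the same three bounds: $\P(E_n^c)$ is controlled by Chernoff's bound exactly as in \eqref{eq:enc}; $\P(F_n^c\mid E_n)$ is bounded by a union bound over the at most $O((n_*+n)^{4d})$ pairs of $(n-2)$-stage sub-boxes, using that a good $(n-1)$-stage box carries at least $C_{n-1}$ distinct dominant vertices of mark $\le u_{n-2}$, so that the probability of \emph{no} joining edge between two such boxes is at most $(1-p_n)^{C_{n-1}}$ with $p_n\approx\beta^\delta u_{n-2}^{-2\gamma\delta}R_n^{-d\delta}$; and $\P(G_n^c\mid E_n)\le\exp\{-C_nu_n/u_{n-1}\}$ by \eqref{eq:uniform_cond}. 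Each term is summable in $n$ once $n_*$ is large, and in fact $\le\ell_n=\tfrac13(n+1)^{-3/2}$ by the same induction, so $\P(\bigcap_{n\ge1}L_n)>0$. By translation invariance it suffices to establish this for the construction rooted at the box containing the origin, after which \Cref{prop:renorm} (applicable since $\sum_nC_n^{-1}<\infty$) shows that the infinite component of $\mathcal G^\beta$ is transient almost surely.

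The place where the hypothesis $\gamma>\tfrac12$ enters — the analogue of $\gamma>\tfrac\delta{\delta+1}$ in \Cref{prop:transience} — is precisely the bound on $\P(G_n^c\mid E_n)$: with the recalibrated $u_n$ one has $u_n/u_{n-1}\gtrsim c\,(n_*+n)^{-d/\gamma}$ from the factorial ratio, hence $C_nu_n/u_{n-1}\gtrsim(n_*+n)^{2d-d/\gamma}=(n_*+n)^{d(2-1/\gamma)}$, and the exponent $d(2-1/\gamma)$ is positive exactly when $\gamma>\tfrac12$. The step I expect to require the most care is the bookkeeping of this recalibration — choosing $\varepsilon$ and $n_*$ so that $\P(E_n^c),\P(F_n^c\mid E_n)$ and $\P(G_n^c\mid E_n)$ all stay summable simultaneously — together with the verification that the direct-connection version of condition~(F) is still strong enough for \Cref{def:grs} even though, unlike the two-connection probability, a single direct connection probability need not tend to $1$; this is exactly what forces one to exploit the $C_{n-1}$-fold independence inside a good box rather than demanding, as in the min-kernel proof, that \emph{every} pair of dominant vertices be joined. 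The analogous proof for the max kernel proceeds in the same way, with $g^{\rm prod}$ replaced by $g^{\rm max}$ and the exponent $\tfrac1{2\gamma}$ by the one dictated by $(s\vee t)^{1+\gamma}$.
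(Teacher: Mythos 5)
Your proposal follows the paper's proof of \Cref{prop:product} essentially verbatim in every quantitative respect: the paper likewise reruns the renormalisation of \Cref{prop:transience} with direct connections between dominant vertices in place of connectors, keeps $C_n$ and $D_n$, replaces $u_n$ in \eqref{eq:un} by a sequence whose factorial part is $\big(\tfrac{(n_*+n+3)!}{n_*!}\big)^{-d/\gamma}$ (exactly your recalibration), and obtains $\P(G_n^c\mid E_n)\leq\exp\{-c_2(n+1)^{d(2-1/\gamma)}\}$ in place of \eqref{eq:gnc}, so the hypothesis $\gamma>\tfrac12$ enters precisely where you say it does.

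The one place you deviate is condition (F), and there your modification opens a gap. You replace ``every pair of $(n-2)$-dominant vertices in $Q'$ and $Q''$ is connected'' by ``\emph{some} $(n-2)$-dominant vertex of $Q'$ is joined to \emph{some} $(n-2)$-dominant vertex of $Q''$'', i.e.\ a single edge per pair of good $(n-1)$-stage boxes, and you compensate with the bound $(1-p_n)^{C_{n-1}}$. But \Cref{def:grs}, and hence \Cref{prop:renorm}, demands an edge between \emph{every} pair of $(m-1)$-stage collections inside an $(m+1)$-stage subgraph --- two-level connectivity --- not merely one edge between each pair of next-level sub-bags; a complete graph on the $C_n$ sub-bags with one edge per pair is not the structure Berger's flow/conductance computation uses, so you cannot invoke \Cref{prop:renorm} as stated. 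The paper instead leaves (F) quantified over all pairs of $(n-2)$-dominant vertices (now joined directly rather than through a connector) and runs the same union bound over the $O((n_*+n)^{4d})$ pairs. Your underlying worry --- that a single direct-connection probability need not tend to $1$ --- is a reasonable one, but any repair that exploits multiplicity must retain the quantification over all pairs of good $(n-2)$-stage sub-boxes and draw its independent trials from the many vertices inside the two corresponding collections, not from the $C_{n-1}$ sub-boxes of one $(n-1)$-stage box; as written, your version of (F) does not produce a graph renormalised in the sense of \Cref{def:grs}.
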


		\begin{proof}
			We claim that when \(\gamma>\frac{1}{2}\), the statement of Proposition \ref{prop:transience} holds for the product kernel as well. To see why, we repeat the steps of the proof of Proposition \ref{prop:transience}, setting \(C_n\) and \(D_n\) as before, replacing the value of \(u_n\) in \eqref{eq:un} by
			\[
			u_n=\beta^{\frac{1}{2\gamma}}d^{-\frac{d}{4\gamma}}(n_*+n+2)^{-\frac{\varepsilon}{2\gamma\delta}}2^{-\frac{d(n_*+n+2)}{2n_*\gamma}}(\tfrac{(n_*+n+3)!}{n_*!})^{-\frac{d}{\gamma}},
			\]
			with \(\varepsilon<d(2-\frac{1}{\gamma})\). Then, continuing along with the proof, we obtain instead of \eqref{eq:gnc} that
			\[
			\P(G_n^c\,|\,E_n)\leq\exp\{-c_2(n+1)^{d(2-\frac{1}{\gamma})}\},
			\]
			which is again a decreasing function of \(n\), since the exponent of \((n+1)\) is positive for \(\gamma>\frac{1}{2}\). The rest of the proof then proceeds unchanged.	
		\end{proof}
		
		The last kernel left to consider is the max kernel \(g^{\textrm{max}}\). Here, just like in the product kernel case, direct connections between vertices suffice to show that the graph is transient.
		
		\begin{proposition}
			{Consider the weight-dependent random connection model $\mathcal{G}^\beta$ with max kernel and any \(\gamma>0\). 
				For any $\beta>0$ we have that $\mathcal{G}^\beta$ is transient almost surely.}
		\end{proposition}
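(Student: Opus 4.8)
The plan is to mimic the renormalisation argument of \Cref{prop:transience} and \Cref{prop:product}, using \emph{direct} connections between dominant vertices rather than connectors, exactly as in the product-kernel case. The essential point is that for the max kernel $g^{\mathrm{max}}(s,t)=\tfrac1\beta(s\vee t)^{1+\gamma}$, when two $(n-2)$-dominant vertices $\x=(x,s)$, $\y=(y,t)$ both have marks at most $u_{n-2}$, their direct connection probability is
\[
\rho\big(\tfrac1\beta (s\vee t)^{1+\gamma}|x-y|^d\big)\geq \rho\big(\tfrac1\beta u_{n-2}^{1+\gamma}|x-y|^d\big),
\]
and since $|x-y|^d\leq d^{d/2}2^{nd}\big(\tfrac{(n_*+n)!}{n_*!}\big)^{2d}$ inside an $n$-stage box and $\rho$ decays like $r^{-\delta}$, the failure probability of a single such edge is at most $\exp\{-c\,u_{n-2}^{-(1+\gamma)\delta}\,d^{-d\delta/2}2^{-nd\delta}(\tfrac{(n_*+n)!}{n_*!})^{-2d\delta}\}$. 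So one should define $u_n$ with exponent chosen to match $(1+\gamma)\delta$ in place of $\gamma(\delta+1)$; concretely, replace \eqref{eq:un} by something like
\[
u_n=\tfrac1{c_1}(n_*+n)^{-\frac{\varepsilon}{(1+\gamma)\delta}}2^{-\frac{(n_*+n)d\delta}{n_*(1+\gamma)\delta}}\Big(\tfrac{(n_*+n)!}{n_*!}\Big)^{-\frac{2d\delta}{(1+\gamma)\delta}},
\]
with $\varepsilon>0$ small; here any $\gamma>0$ makes the relevant exponents work out, in fact more easily than for the product kernel since $1+\gamma>1$ always.

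The steps, in order: (1) fix $\varepsilon>0$ small and $n_*$ large, set $C_n=p_B(n_*+n)^{2d}$, $D_n=2(n_*+n)^2$ as before, and let $p_B$ (now the probability that a $D_1$-box contains a vertex of mark $<u_1$) be close to $1$ by taking $n_*$ large; (2) define good $1$- and $2$-stage boxes and $n$-dominant vertices exactly as in \Cref{prop:transience}, but in condition (F) require every pair of distinct $(n-2)$-dominant vertices in $Q',Q''\in\mathcal Q$ to be \emph{directly} connected; (3) bound $\P(F_n^c\mid E_n)$ by a union bound over the at most $4^d(n_*+n)^{4d}$ pairs of $(n-2)$-boxes, using the single-edge estimate above together with \Cref{lemma:uniform} to handle the case where $s\vee t$ is atypically large (split on whether $s\vee t$ exceeds $(d^{1/2}2^n(\tfrac{(n_*+n)!}{n_*!})^2)^{-d/\gamma}$, just as in \Cref{prop:transience}), and invoke FKG/positive correlation given $\mathcal X$ — but here $\mathcal X$ should be taken as \emph{all} marked vertices, since no connectors are needed, so the correlation structure is even simpler; (4) bound $\P(G_n^c\mid E_n)$ via $\P(G_n^c\mid E_n)\leq\exp\{-C_n u_n/u_{n-1}\}$ and $\tfrac{u_n}{u_{n-1}}\geq c_2(n_*+n)^{-\frac{2d\delta}{(1+\gamma)\delta}}$, so the exponent of $(n_*+n)$ in $C_n u_n/u_{n-1}$ is $2d-\tfrac{2d}{1+\gamma}=\tfrac{2d\gamma}{1+\gamma}>0$ for all $\gamma>0$; (5) bound $\P(E_n^c)$ by Chernoff exactly as in \eqref{eq:enc}; (6) combine into the recursion for $\P(L_n^c)$ and show inductively $\P(L_n^c)\leq\ell_n=\tfrac13(n+1)^{-3/2}$ with $\sum\ell_n<1$, so $\P(\bigcap_n L_n)>0$; then \Cref{prop:renorm} (with $\sum C_n^{-1}\leq\sum p_B^{-1}(n_*+n)^{-2d}<\infty$) gives transience of the renormalised subgraph, hence of the infinite component, almost surely by translation invariance and ergodicity.

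The only genuinely new bookkeeping compared with \Cref{prop:transience} is verifying that the arithmetic of the exponents still closes up — i.e., that with the $(1+\gamma)\delta$-scaling in $u_n$ one simultaneously gets (a) $p_B\to1$ (needs $\varepsilon$ together with the $-\tfrac{2d\delta}{(1+\gamma)\delta}=-\tfrac{2d}{1+\gamma}>-2d$ exponent to leave a net positive power of $n_*$), (b) the $F_n$ bound super-polynomially small in $n$, and (c) the $G_n$ exponent $\tfrac{2d\gamma}{1+\gamma}>0$. All three hold for every $\gamma>0$ and every $\delta>1$, which is why the max-kernel statement carries no restriction on $\gamma$ or $\delta$; I do not anticipate a serious obstacle, only the need to re-run the estimates with the substituted exponents. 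One could alternatively phrase the whole thing as: the max kernel with parameter $\gamma$ dominates (up to constants absorbed into $\beta$, which is irrelevant since $\beta_c=0$) a product-type comparison, but the cleanest route is the direct repetition just described. Hence $\mathcal G^\beta$ is transient almost surely for all $\beta>0$, completing the proof of \Cref{thmRecurTrans}(d).
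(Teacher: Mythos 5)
Your proposal is correct and follows the paper's own proof essentially verbatim: the paper likewise disposes of the max kernel by re-running the product-kernel renormalisation with \emph{direct} connections between dominant vertices and with $u_n$ rescaled so that the factorial term carries exponent $-\tfrac{2d}{1+\gamma}$ (your $\tfrac{2d\delta}{(1+\gamma)\delta}=\tfrac{2d}{1+\gamma}$ is the same), which makes the exponent in the $G_n$-estimate equal to $\tfrac{2d\gamma}{1+\gamma}$, positive precisely when $\gamma>0$. The only cosmetic differences are that the paper shifts the indices inside $u_n$ (writing $n_*+n+2$ and $n_*+n+3$) and fixes the constraint $\varepsilon<\tfrac{\gamma}{1+\gamma}$, whereas you leave $\varepsilon$ as ``sufficiently small''; this is bookkeeping, not substance.
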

		
		\begin{proof}
			The result follows by repeating the proof of Proposition \ref{prop:product}, setting
			\[
			u_n:=\frac{1}{c_1}(n_*+n+2)^{-\frac{\varepsilon}{\delta(1+\gamma)}}2^{-\frac{(n_*+n+2)d}{n_*(1+\gamma)}}(\tfrac{(n_*+n+3)!}{n_*!})^{-\frac{2d}{1+\gamma}}
			\]
			where \(c_1\) is a constant depending on \(\rho\), \(d\) and \(\beta\), and \smash{\(\varepsilon<\frac{\gamma}{1+\gamma}\)}. Then, \(\P(G_n^c\,|\,E_n)\) is again decreasing in \(n\) precisely when \(\gamma>0\).
		\end{proof}
		\pagebreak[3]

		\subsection{Transience in the non-robust case}
		\label{sec:trannorob}
		\paragraph{The transience result.}
		We now turn to transience for non-robust supercritical percolation clusters in the weight-dependent random connection model. Of particular interest is the case $\lim_{r\to\infty}\rho(r)r^\delta=1$ with $\delta\in (1,2)$ and preferential attachment kernel $g=g^{\rm pa}$. However, we see in Theorem~\ref{thm:transnorob} that the precise form of $g$ is not very important, and in fact our argument applies for all the kernels discussed in Section~\ref{sec:intro}. The general strategy of the proof is similar to the robust case, in that we show that the infinite cluster contains a transient subgraph. \medskip
		
		On the one hand, in the non-robust regime (i.e., if $\beta_c>0$) this is a little more delicate, since the backbone of dominant vertices, which is only present in the robust case, cannot be used. On the other hand, the slow decay of connection probabilities as a function of distance allows us to relate the model to an instance of long-range percolation on the lattice via a coarse graining argument. A similar approach was used by Deprez and W\"uthrich \cite{DepreWuthr19}, the model there corresponds to our product kernel.
		We now reformulate the transience result for $\delta<2$.
		\begin{theorem}\label{thm:transnorob}
			Let $\mathcal{G}^{\beta}$ denote the weight-dependent random connection model where
			\begin{enumerate}[(a)]
				\item $g$ is bounded, that is, $g^\ast:=\sup_{(s,t)\in(0,1)^2}g(s,t)<\infty$;\smallskip
				\item $\rho$ is regularly varying at $\infty$ of index $-\delta$ with $\delta\in(1,2)$. 
			\end{enumerate}
			Then, for all $\beta>\beta_c$, the graph $\mathcal{G}^{\beta}$ is transient almost surely.
		\end{theorem}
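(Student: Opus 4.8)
The strategy is a coarse-graining argument that compares $\mathcal{G}^\beta$, restricted to a suitable slab of marks, with a supercritical instance of long-range percolation on $\Z^d$ whose connection exponent is $d\delta < 2d$; transience of the latter (by Berger~\cite{Berge02}, using the renormalised graph sequence machinery of \Cref{def:grs} and \Cref{prop:renorm}) then transfers to $\mathcal{G}^\beta$. Concretely, fix $\beta>\beta_c$. Since $\beta_c$ is the infimum of $\beta$ with $\theta(\beta)>0$, pick $\beta'$ with $\beta_c<\beta'<\beta$, so that $\mathcal{G}^{\beta'}$ already percolates. Because $g$ is bounded by $g^\ast$, we have $\phi(\x,\y)=\rho(g(s,t)|x-y|^d)\ge \rho(g^\ast |x-y|^d)$ for all marks, so the connection probability between two vertices is bounded below by a quantity that depends only on their spatial distance and is regularly varying of index $-\delta$ in $|x-y|^d$, i.e.\ of index $-d\delta$ in $|x-y|$. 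This lets us couple $\mathcal{G}^\beta$ from below with a homogeneous long-range percolation-type model on the Poisson points.

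**Main steps.** (i) \emph{Renormalisation into boxes.} Tile $\R^d$ into boxes $Q_v=Nv+[0,N)^d$, $v\in\Z^d$, for a large parameter $N$ to be chosen. Call $v$ \emph{occupied} if $Q_v$ contains a connected component of $\mathcal{G}^{\beta'}$ that is ``crossing'' in a suitable sense (e.g.\ connects all faces, or reaches a fixed fraction of the diameter) — the standard block event used in renormalisation of supercritical continuum percolation. By a block argument (e.g.\ as in Penrose's book, or as used in~\cite{DepreWuthr19}), since $\beta'>\beta_c$, the probability that $v$ is occupied can be made arbitrarily close to $1$ by taking $N$ large, and these events have finite range of dependence; by a Liggett--Schonmann--Stacey-type domination, the field of occupied blocks stochastically dominates a highly supercritical Bernoulli site percolation on $\Z^d$, and moreover the crossing clusters in neighbouring occupied boxes connect up, so that occupied blocks form an infinite cluster that lies inside the infinite cluster of $\mathcal{G}^{\beta'}\subseteq\mathcal{G}^\beta$. (ii) \emph{Long edges between far-away boxes.} For two boxes $Q_v,Q_w$ with $|v-w|$ large, the extra edge density $\beta-\beta'$ provides, independently of everything used in step (i), a direct edge between a crossing cluster in $Q_v$ and one in $Q_w$ with probability bounded below by $c\,(N|v-w|)^{-d\delta}\cdot N^{2d} \asymp c' |v-w|^{-d\delta}$ (integrating $\rho(g^\ast|x-y|^d)$ over $x\in Q_v$, $y\in Q_w$, using regular variation and that each box has order $N^d$ Poisson points, only a positive fraction of whose pairs we need). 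Hence, on the lattice of occupied blocks, we obtain — again by a domination argument, this time using a BK/finite-energy-type independence across disjoint pairs — a lower bound by a long-range percolation configuration on $\Z^d$ in which each pair $\{v,w\}$ is open with probability at least $p_\infty \wedge (c'|v-w|^{-d\delta})$, all these events being jointly dominating an independent family. (iii) \emph{Transience of the comparison model.} Long-range percolation on $\Z^d$ with $p_{vw}\ge \min(p_\infty, c'|v-w|^{-s})$ for $s=d\delta<2d$ is transient on its infinite cluster whenever $p_\infty$ is close enough to $1$: this is exactly the regime handled by Berger~\cite{Berge02}, whose renormalised-graph-sequence construction (our \Cref{def:grs}, \Cref{prop:renorm}) produces a transient subgraph. (iv) \emph{Transfer.} Since the comparison model's open edges correspond to genuine connections in $\mathcal{G}^\beta$ between crossing clusters, and crossing clusters within an occupied box are themselves connected, a transient subgraph of the comparison model embeds (via a bounded-overlap, bounded-degree, Rayleigh-monotonicity argument — contracting each crossing cluster to a point changes effective resistance only by bounded factors) as a transient subgraph of the infinite cluster of $\mathcal{G}^\beta$. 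By Rayleigh monotonicity, $\mathcal{G}^\beta$ is transient.

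**Expected main obstacle.** The delicate point is \emph{independence bookkeeping in the coarse graining}: the block-occupation events of step~(i) and the long-edge events of step~(ii) must be arranged so that what remains on the $\Z^d$ lattice genuinely dominates an independent long-range percolation configuration of the Berger type, despite the fact that (a) block events for nearby $v$ are correlated, (b) the ``same'' Poisson points are shared between a block's internal structure and its long edges, and (c) long-edge events for overlapping pairs of boxes share randomness. The clean way around this is to split the mark space and the edge-mark randomness: use marks/edges at ``level $\beta'$'' only for building crossing clusters (step i) and the independent increment of edge-marks in $(\beta',\beta]$ only for long edges (step ii), so that steps (i) and (ii) are independent; then handle the within-step correlations by a Liggett--Schonmann--Stacey domination for the finite-range block field in (i), and by a direct second-moment/sprinkling argument for the long edges in (ii). A second, more technical obstacle is verifying that Berger's transience criterion applies verbatim to a long-range percolation model in which the connection probabilities are only bounded below by (not equal to) the polynomial profile, and in which the underlying ``sites'' carry the residual geometric structure of crossing clusters; here one invokes Rayleigh monotonicity to reduce to the idealised model, which is precisely the content of~\cite[Prop.~5.3]{HeydeHulshJorri17} adapted to $\delta<2$, and the renormalisation scheme of \Cref{prop:transience} run with direct (polynomial) connection probabilities in place of the two-connection bound of \Cref{lemma:two_connection}.
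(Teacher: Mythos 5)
Your overall architecture coincides with the paper's: fix $\beta'\in(\beta_c,\beta)$, use the increment from $\beta'$ to $\beta$ as independent sprinkling, coarse-grain to a long-range percolation model on $\Z^d$ with connection exponent $d\delta^\ast<2d$, invoke Berger's transience result, and transfer back via Rayleigh monotonicity after contracting the local clusters. Steps (ii)--(iv) are in substance what the paper does (the paper's transfer step is more careful, using spanning trees of the occupying sets and rescaling conductances by $(1+M^d)^{-1}$, but your sketch of it is fine).

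The genuine gap is in step (i). You declare a box occupied if it contains a \emph{crossing} cluster of $\mathcal{G}^{\beta'}$, and assert that ``by a standard block argument'' the occupation probability tends to $1$ with $N$, with finite range of dependence, so that a Liggett--Schonmann--Stacey domination applies and neighbouring crossing clusters connect up. No such crossing (or Grimmett--Marstrand type) estimate is available for the weight-dependent random connection model merely from $\beta'>\beta_c$: the model has unbounded-range edges and inhomogeneous degrees, and local uniqueness/crossing statements in the supercritical phase are exactly the kind of result that is open here (this is also why the paper cannot prove transience for $\delta>2$, $d\ge 3$ in the non-robust regime). The block arguments in Penrose's book concern bounded-range Boolean-type models, and \cite{DepreWuthr19} does not prove a crossing estimate either but a cluster-density statement via a Berger-style recursive construction. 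What the paper actually establishes --- and what constitutes the bulk of its proof --- is the strictly weaker Proposition~\ref{lem:perfin}: with probability $\ge 1-\varepsilon$ a box of side $M$ contains \emph{some} cluster of size $M^{\lambda d}$ with $\lambda<1$ (no crossing, no positive density, no independence structure claimed). This is proved by a dedicated multiscale scheme: stage-$0$ preclusters come from the ergodic density of the unique infinite cluster of $\mathcal{G}^{\beta'}$ intersected with boxes (using uniqueness to connect them inside a $k$-neighbourhood), and stage-$n$ preclusters are glued out of stage-$(n-1)$ ones using only the independent sprinkled edges, with the union bound controlled by the choice $\theta_n=n^{-2}$, $\sigma_n=\lceil (n+1)^\eta\rceil$. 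Since $\lambda$ can be taken $>\delta^\ast/2$, this weaker density suffices for your step (ii). Without supplying this ingredient (or an actual proof of your crossing claim), your argument does not close; as written, step (i) assumes the hardest part of the theorem.
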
\pagebreak[3]
		
		Mind that assumption (b) always holds if \eqref{eq:rhoass2} is satisfied and that assumption (a) is satisfied by all our kernels so that Theorem \ref{thm:transnorob} readily implies our transience claims if $\delta<2$. Let $B$ denote a non-empty domain in $\R^d$. A \emph{cluster in} $B$ is a connected component of the subgraph of $\mathcal{G}^\beta$ induced by all vertices with positions in $B$.
		The key result needed to establish transience is the following statement concerning percolation on finite boxes.
		
		\begin{proposition}[Local density of percolation clusters]\label{lem:perfin}
			Consider $\mathcal{G}^\beta$ with $\beta>\beta_c$ under the assumptions of Theorem~\ref{thm:transnorob}. For any $\lambda\in (0,1)$, and any $\varepsilon>0$, there exists an $M_0\in\N$, such that for all $M>M_0$
			\[
			\P\big(\text{there is a cluster of size at least }M^{\lambda d} \text{ in }[-M/2,M/2)^d\big)\geq 1-\varepsilon.
			\]
		\end{proposition}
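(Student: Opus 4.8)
The plan is to deduce the local density statement from the supercriticality assumption $\beta>\beta_c$ together with a standard coarse-graining/renormalisation argument that exhibits a well-behaved supercritical cluster on scale $M$. I would proceed as follows. First, fix $\beta'\in(\beta_c,\beta)$ (using strict supercriticality here is essential). Since $\theta(\beta')>0$, standard ergodicity arguments for the unique infinite cluster give, for any $\eta>0$, a scale $L_0$ such that a single box of side $L_0$ meets the infinite cluster with probability at least $1-\eta$; more importantly, by the FKG inequality \eqref{eq:FKG} and translation invariance, the indicator events ``box $Q$ of side $L_0$ is crossed by a cluster that also connects to neighbouring boxes'' can be taken to dominate a supercritical site-percolation field on $\Z^d$ once $L_0$ is large. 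This is the usual block argument: declare a block \emph{good} if the induced subgraph on a slightly enlarged box contains a connected component touching all $2d$ faces, and use that the probability of this tends to $1$ as $L_0\to\infty$ because of supercriticality. The excess edge density $\beta-\beta'$ is exploited exactly as in the coarse-graining of Lemma~\ref{lemma:rescale}: edges present under $\mathcal{G}^{\beta'}$ are present under $\mathcal{G}^{\beta}$, and the extra density can be used to ``repair'' the sparse long edges so that good blocks genuinely connect up.

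Second, I would run the block argument at a scale $L=L(M)$ chosen so that $M/L\to\infty$, e.g. $L$ a large fixed constant (independent of $M$) while $M\to\infty$. Tiling $[-M/2,M/2)^d$ into $(M/L)^d$ blocks, the good blocks dominate a Bernoulli site-percolation configuration on a box of side $M/L$ in $\Z^d$ with parameter arbitrarily close to $1$. By a quantitative form of the standard fact that supercritical percolation on a large box contains a giant cluster occupying a positive fraction of the box with probability tending to $1$ (see e.g. Deuschel--Pisztora-type results, or a crude Peierls/contour argument suffices since we only need a cluster of \emph{polynomial} size $M^{\lambda d}$, not a linear-size one), we get that with probability at least $1-\varepsilon$ there is a connected component of good blocks of cardinality at least, say, $c(M/L)^d$. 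Since each good block contributes at least one vertex to a single connected component of $\mathcal{G}^\beta$ — and good blocks in this giant component are genuinely joined by edges of $\mathcal{G}^\beta$ — this yields a cluster in $[-M/2,M/2)^d$ of size at least $c(M/L)^d \ge M^{\lambda d}$ for $M$ large, because $c(M/L)^d$ grows like $M^d$ which dominates $M^{\lambda d}$ for any $\lambda<1$. Note that the very weak lower bound $M^{\lambda d}$ (rather than a constant times $M^d$) gives a lot of slack, so one does not need the sharp Deuschel--Pisztora machinery; even a soft argument controlling the two-point connection function in the renormalised lattice model suffices.

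The main obstacle is making the block/renormalisation argument work \emph{uniformly for all $\beta>\beta_c$}, i.e.\ without assuming $\beta$ large: one cannot simply compare with supercritical bond percolation on $\Z^d$ at a fixed high density. The resolution is precisely the coarse-graining idea already used in Lemmas~\ref{lemma:rescale}--\ref{lemma:rescale_single}: by rescaling, a supercritical model at any $\beta>\beta_c$ on blocks of side $L$ looks, after renormalisation, like a near-critical-from-above percolation on $\Z^d$ whose block-parameter $\to1$ as $L\to\infty$; the finite-energy property (used already in the uniqueness discussion after \eqref{eqDefBetaC}) and the FKG inequality \eqref{eq:FKG} guarantee the positive correlation and the ``sprinkling'' needed to connect good blocks. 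A secondary technical point is that $\mathcal{G}^\beta$ has unbounded-range (heavy-tailed) edges, so a good block must be defined using only edges of bounded length (truncate at range $L$); the contribution of longer edges is only helpful, never harmful, for producing large clusters, so truncation is harmless. Once the renormalised field is set up, the passage from ``giant good component'' to ``cluster of size $M^{\lambda d}$'' is immediate from the polynomial slack.
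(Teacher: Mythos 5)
There is a genuine gap at the heart of your block argument: you assert that, at an arbitrary supercritical density $\beta'\in(\beta_c,\beta)$, the event that a box of side $L_0$ contains a crossing cluster touching all faces (and connecting to the analogous clusters in neighbouring blocks) has probability tending to $1$ as $L_0\to\infty$, ``because of supercriticality''. For nearest-neighbour Bernoulli percolation this is a deep fact (Grimmett--Marstrand / slab technology), and no analogue is available for the weight-dependent random connection model in the non-robust regime; establishing such a local-uniqueness / finite-size criterion is precisely the difficulty the paper's proof is designed to avoid. The soft consequences of $\theta(\beta')>0$ that are actually available only tell you that the infinite cluster places $\approx\theta_0 m^d$ vertices in a box of side $m$, and (by uniqueness) that these are all connected \emph{within some $k$-neighbourhood of the box} for a finite but uncontrolled $k=k(m)$ --- not that they form a single crossing cluster inside the box, and not that adjacent blocks' clusters hook up. Your fallback to the coarse-graining of Lemmas~\ref{lemma:rescale}--\ref{lemma:rescale_single} does not repair this: those lemmas rescale connection probabilities for specific kernels in the robust regime and do not produce the ``good-block probability $\to1$'' input; likewise the finite-energy property gives uniqueness of the infinite cluster but is not a sprinkling mechanism.

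The paper's actual route is different in exactly the place where your argument breaks. It (i) defines \emph{preclusters} via the weak event $E_0(B)$ that the infinite cluster of $\mathcal{G}^{\beta'}$ has $\ge\theta_0m^d$ vertices in $B$ all lying in one component of $\mathcal{G}^{\beta'}\cap\partial_k(B)$ --- obtainable softly from ergodicity and uniqueness --- and then (ii) runs a multi-scale recursion in which stage-$n$ preclusters are built by directly wiring together $\theta_n\sigma_n$ stage-$(n-1)$ preclusters using the \emph{sprinkled} edges guaranteed by Lemma~\ref{lem:sprinkling} and \eqref{eq:sprinkling}. The probability that two preclusters of size $v_{n-1}$ at distance $\mathrm{diam}(B_n)$ fail to connect is bounded by $\exp(-\nu v_{n-1}^2\mathrm{vol}(B_n)^{-\delta^*})$, and making this summable forces $\delta^*<2$; this is where the hypothesis $\delta\in(1,2)$ of Theorem~\ref{thm:transnorob} enters. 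Your proposal never uses $\delta<2$, which is a strong signal that something essential is missing: without the heavy tail of $\rho$ there is no mechanism to force distant good blocks (or preclusters) to connect \emph{inside} the box $[-M/2,M/2)^d$ at a density only marginally above $\beta_c$. To fix your argument you would either have to prove a Grimmett--Marstrand-type theorem for this model, or replace the block argument by the sprinkling recursion.
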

		We postpone the proof of Proposition \ref{lem:perfin} and first show how to obtain Theorem~\ref{thm:transnorob} from Proposition~\ref{lem:perfin}. 
		\begin{proof}[Proof of Theorem \ref{thm:transnorob}]
			We apply a coarse-graining argument to relate the continuum model to a bond-site percolated lattice model based on the clusters obtained in Proposition~\ref{lem:perfin}. The coarse graining scheme has three parameters: a typically large $M\in\N$ controlling the coarse graining scale, a density exponent $\lambda\in(0,1)$ moderating the size of the coarse grained clusters and some small $\varepsilon\in(0,1)$ to account for the probability of local defects. As indicated by the notation, the relation of $M,\lambda$ and $\varepsilon$ is going to be dictated by Proposition~\ref{lem:perfin}. To construct the coarse grained lattice 
			configurations $\mathcal{G}'=\mathcal{G}'(M,\lambda,\varepsilon)$ from $\mathcal{G}^{\beta}$, consider the rescaled lattice $M\Zd$. To each site $v\in\Z^d$, we assign the box $B_v=Mv+[-M/2,M/2)^d$. A site $v$ is declared \emph{occupied} if 
			there is a cluster in $B_v$ with at least $M^{\lambda d}$ vertices. To every occupied site~$v$, we assign an \emph{occupying set} $W(v)$ of precisely $\lceil M^{\lambda d}\rceil$ vertices, which induce a connected subgraph of a 
			cluster in $B_v$ as defined above. The existence of such vertex sets is guaranteed by the requirement that $v$ be occupied. Since there are typically many possible candidates for $W(v)$ for any given occupied lattice point $v$, we choose one in an arbitrary but \emph{local} fashion, i.e.\ the choice depends only on the marked point process configuration $\xi$ inside $B_v$. Two occupied lattice points $v,w\in\Z^d$ \emph{communicate} (denoted by $v\leftrightarrow w$) if their corresponding occupying sets are linked by an edge in $\mathcal{G}^{\beta}.$ For any two occupied lattice points $v,w\in\Z^d$ we have
			\begin{align*}
				\P(v \not\leftrightarrow w | \xi_{| B_v\cup B_w}) & \leq \P(\nexists (\x_1,\x_2)\in W(v)\times W(w): \{\x_1,\x_2\}\in E(\mathcal{G}^{\beta}))\\
				& \leq \prod_{((x_1,t_1),(x_2,t_2))\in W(v)\times W(w) }\big(1-\rho\big(g(t_1,t_2)|x_1-x_2|^d \big)\big),
			\end{align*}
			where $\xi_{| B_v\cup B_w}$ indicates the configuration restricted to the boxes $B_v,B_w$. Note that there exists some universal constant $C<\infty$, such that $|x_1-x_2|\leq C M k$ uniformly for all $(x_1,x_2)\in B_{v}\times B_{{w}}$, whenever $\|v-w\|_{\infty}=k\geq 1$. Moreover, since $g(t_1,t_2)\leq g^\ast<\infty$ by assumption and $|W(v)|=|W(w)|\geq {M^{d\lambda}}$ by construction for all occupied sites, we obtain the deterministic bound
			\begin{align*}
				\P(v \not\leftrightarrow w | v,w \text{ occupied}) 
				& \leq \exp\left(-M^{2d\lambda}\rho\big({g^\ast} (C M \|v-w\|_{\infty})^d \big)\right).
			\end{align*}
			Since $\rho$ is regularly varying of index $-\delta\in(-2,-1)$, we can find $z_0<\infty,c>0$ and $\delta^{\ast}<2$ such that $\rho(z)\geq c z^{-\delta^{\ast}}$ for all $z>z_0$. Consequently, we may fix $M_1$, such that for any $M>M_1$
			\begin{equation}\label{eq:lrpcomp}
				\P(v \leftrightarrow w | v,w\text{ occupied})\geq1 - \exp\left(-c_{\ast}\frac{M^{2d\lambda-\delta^{\ast}d}}{|v-w|^{\delta^{\ast} d}} \right)=:p'_{v,w},\quad v,w\in \Z^d,
			\end{equation}
			where $c_{\ast}=c_{\ast}(\rho,g^{\ast},d)$ is some constant independent of $\lambda.$ Note that the estimate in \eqref{eq:lrpcomp} holds uniformly for all configurations in which $v,w$ are occupied, independently of the occupation status of other vertices and whether they communicate or not. It thus follows from \eqref{eq:lrpcomp} that the configuration 
			$$\Big( \big\{\mathds{1}_{\{ v \text{ occupied} \}}: v\in \Z^d \big\}, \big\{\mathds{1}_{\{ v,w \text{ communicate} \}}: \{v,w\}\in (\Z^d)^{[2]}\big\} \Big)$$ 
			dominates a
			long-range percolation model on a site-percolated instance of $\Z^d$ with site retention probability $1-\varepsilon=\P(0 \text{ is occupied})$ and edge probabilities $p'_{v,w}$, for~$v,w\in \Z^d$. We denote this model by $\mathcal{G}'(M,\lambda,\varepsilon).$ By Proposition~\ref{lem:perfin}, we can choose $\lambda>\delta^{\ast}/2$ and then $M$ sufficiently large such that both $\varepsilon$ is arbitrarily close to $0$ and $p'_{v,w}$ dominates the connection probability in an arbitrarily dense long-range percolation cluster with edge decay parameter $\delta^{\ast}d<2d$. For such a choice of parameters it follows from \cite[Lemma 2.7]{Berge02} that $\mathcal{G}'$ is transient almost surely.
			\medskip
			
			It remains to argue why transience of the coarse grained model implies transience of the original graph. A slightly different argument in the same spirit is given in \cite[p.\ 545]{Berge02} for long-range percolation, but we provide an explicit construction. Let $V'$ and $E'$ denote the sites and edges of the coarse-grained configuration $\mathcal{G'}$. By Theorem \ref{thrm:effres}, transience of $\mathcal{G'}$ is equivalent to positivity of the {effective conductance} $C_{\textup{eff}}(0,\infty)$ in the electrical network obtained from $(V',E')$ in which each edge in $E'$ is assigned a unit conductance. 
			Recall that each node in $v\in V'\subset \Z^d$ is associated with a connected subgraph on the vertex set $W(v)$ in \smash{$\mathcal{G}_0^\beta$} contained inside the box $B_{v}$ and each edge in $E'$ incident to $v$ requires the existence of an edge in \smash{$\mathcal{G}_0^\beta$} with one endpoint $\x\in W(v)$. Choose for each edge such an endpoint and denote the sets of all the chosen points by~$X(v)$. Let $v\neq 0$, let $\mathcal{T}(v)$
			be a subtree of  \smash{$\mathcal{G}_0^\beta$} that spans $W(v)$ (which exists, because we required $W(v)$ to be connected) and equip the edges of $\mathcal{T}(v)$ with unit conductances. By monotonicity and the fact that  $W(v)$ contains at most $M^{\lambda d}+1$ vertices, the effective conductance between any two leaves of $\mathcal{T}(v)\subset\mathcal{G}'$ is bounded from below by the effective conductance between the two endpoints of a line graph with $M^{d}$ vertices. The same estimate holds for any two points in $X(v)$, since $X(v)$ is contained in $\mathcal{T}(v)$. It follows, again by monotonicity, that if we collapse \smash{$\mathcal{T}(v)\subset\mathcal{G}_0^\beta$} into the single vertex $v$ (whilst keeping all edges connecting $\mathcal{T}(v)$ to vertices in different boxes) and reduce the conductance on any edge $e$ incident to $v$ which corresponds to an edge in $\mathcal{G}'$ to $1/(1+M^{d})$, then we obtain a network with effective conductances which are dominated by those of the graph obtained from~$\mathcal{G}'$ by expanding the vertices into the respective trees $\mathcal{T}(v)$. An analogous argument also holds for the case $v=0$. We conclude, that if we assign to each edge of $\mathcal{G'}$ the conductance $(1+M^{d})^{-1}$ we obtain an electrical network whose effective conductance between any given vertex and $\infty$ are dominated by those of the subgraph of $\mathcal{G}^{\beta}$ obtained by expanding all $v\in V'$ into their corresponding $\mathcal{T}(v)$. This establishes transience of $\mathcal{G}^{\beta}$, since multiplying the conductances in~$\mathcal{G}'$ by a constant factor has the same effect on the effective conductance $C_{\textup{eff}}(0,\infty)$.
		\end{proof}
		
		\begin{remark}
			For $\|v-w\|_{\infty}=1$, the estimate \eqref{eq:lrpcomp} yields that the coarse-grained random connection model dominates a (vertex-percolated) supercritical nearest-neighbour bond percolation cluster, which yields transience in $d\geq 3$ without recourse to long-range percolation, see~\cite{GrimmKesteZhang93}.
		\end{remark}
		
		\paragraph{Local density of percolation clusters.}
		We now turn to the proof of Proposition~\ref{lem:perfin}. We present a more direct (but weaker) sprinkling argument than the one given for the analogous (but stronger) statement \cite[Lemma 2.3]{Berge02} for long-range percolation. Fix $\beta>\beta_c$ and $\beta'\in(\beta_c,\beta).$ Our construction in Section \ref{secModel} provides a coupling $\mathcal{G}^{\beta}$ and $\mathcal{G}^{\beta'}$ from the same configuration $\xi$ in such a way that $\mathcal{G}^{\beta'}\subset\mathcal{G}^{\beta}$. The following lemma extends this observation. To formulate it, we write $q\mathcal{G}^{\beta}$ for the geometric graph obtained from $\mathcal{G}^{\beta}$ by performing independent Bernoulli percolation with retention probability $q\in[0,1]$ on the edges of $\mathcal{G}^{\beta}$.
		\begin{lemma}\label{lem:sprinkling}
			For any $\beta'\in(0,\beta)$ there exists some $q(\beta')\in(0,1)$ and a coupling of the random geometric graphs $\mathcal{G}^{\beta},\mathcal{G}^{\beta'}$ and $q(\beta')\mathcal{G}^{\beta}$ such that under the coupling
			$$E\big(\mathcal{G}^{\beta'}\big)\subset E\big(q(\beta')\mathcal{G}^{\beta}\big)\subset E\big(\mathcal{G}^{\beta}\big).$$
		\end{lemma}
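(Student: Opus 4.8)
The plan is to exploit the explicit construction of the model in Section~\ref{secModel}, in which all edges of $\mathcal{G}^{\beta}$ for every $\beta$ are built from the \emph{same} uniform marks $(U_{i,j})$, so that $\mathcal{G}^{\beta'}\subset\mathcal{G}^{\beta}$ already holds deterministically on this common probability space. The only task is therefore to insert an independent Bernoulli$(q)$ percolation $q\mathcal{G}^{\beta}$ \emph{between} these two graphs. For a fixed (unordered) pair of vertices $\x_i=(X_i,S_i)$, $\x_j=(X_j,S_j)$, write $\pi'=\phi^{\beta'}(\x_i,\x_j)$ and $\pi=\phi^{\beta}(\x_i,\x_j)$; since the kernel $g$ carries the $1/\beta$ prefactor and $\rho$ is non-increasing, we have $\pi'\leq\pi$ pointwise. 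The edge $\{\x_i,\x_j\}$ is present in $\mathcal{G}^{\beta'}$ iff $U_{i,j}\leq\pi'$ and in $\mathcal{G}^{\beta}$ iff $U_{i,j}\leq\pi$. I would attach to each pair an independent Bernoulli$(q)$ random variable $V_{i,j}$ (independent of everything else) and declare the edge retained in $q\mathcal{G}^{\beta}$ iff $U_{i,j}\leq\pi$ and $V_{i,j}=1$. The left inclusion $E(\mathcal{G}^{\beta'})\subset E(q(\beta')\mathcal{G}^{\beta})$ fails in general for this naive coupling, so the real work is to couple $V_{i,j}$ to $U_{i,j}$ so that $\{U_{i,j}\leq\pi'\}\subset\{U_{i,j}\leq\pi,\ V_{i,j}=1\}$ while $V_{i,j}$ still has the correct marginal $q$ and is independent across pairs.

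The key computation is to choose $q=q(\beta')$ uniformly (over all vertex marks, positions, and over all pairs) small enough. Conditionally on the vertex configuration $\eta'$, define
\[
V_{i,j}:=\mathds 1\{U_{i,j}\leq \pi'\}\ +\ \mathds 1\{\pi'<U_{i,j}\leq \pi\}\,W_{i,j},
\]
where $(W_{i,j})$ are independent Bernoulli$(r_{i,j})$, independent of $(U_{i,j})$, with $r_{i,j}$ chosen so that $\P(V_{i,j}=1\mid\eta')=q$, i.e.\ $r_{i,j}=(q-\pi')/(\pi-\pi')$ when $\pi'<\pi$ (and $V_{i,j}\equiv 1$ resp.\ any independent Bernoulli$(q)$ when $\pi'=\pi$, where the inclusion is automatic). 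This is a valid definition precisely when $0\leq q-\pi'$ and $q-\pi'\leq\pi-\pi'$, i.e.\ $\pi'\leq q\leq\pi$. The point is that one can choose a single $q\in(0,1)$ with $\pi'\leq q\leq\pi$ simultaneously for all pairs: indeed $\pi,\pi'\in[0,1]$ always, and the pairs for which this is binding are exactly those with $\pi$ close to $1$; a uniform choice is possible because $\pi'/\pi=\rho(g^{\beta'}(\cdot)|x-y|^d)/\rho(g^{\beta}(\cdot)|x-y|^d)$ is bounded above by a constant strictly less than $1$ uniformly in the marks and the distance, thanks to $\beta'<\beta$ together with monotonicity of $\rho$ and of $g$ in its $\beta$-dependence (this is where one checks, kernel by kernel or via the common $1/\beta$ scaling, that $\rho(c|x-y|^d)/\rho(c'|x-y|^d)$ stays bounded away from $1$ when $c=g^{\beta'}>c'=g^{\beta}$ — using that $\rho$ is non-increasing and, by \eqref{eq:rhoass2}, regularly varying so that it has no flat stretches at infinity strong enough to spoil the bound; on the bounded range one uses $\rho<1$ and continuity-type estimates). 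Then $V_{i,j}$ has marginal $q$, the triple $(U_{i,j},W_{i,j})$ is independent across pairs, and by construction $\{U_{i,j}\leq\pi'\}\subset\{U_{i,j}\leq\pi\}\cap\{V_{i,j}=1\}\subset\{U_{i,j}\leq\pi\}$, giving the desired chain of edge-set inclusions.

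The main obstacle I anticipate is establishing the \emph{uniform} bound $\sup \pi'/\pi<1$ over all vertex marks and all distances, equivalently that $q$ can be chosen once and for all rather than pair-dependently; if one only had a pair-dependent $q_{i,j}$ the resulting percolation would not be i.i.d.\ Bernoulli and the comparison to $q\mathcal{G}^{\beta}$ would break. This is handled by observing that each kernel has the form $g^{\beta}=\tfrac1\beta g^{1}$, so $g^{\beta'}/g^{\beta}=\beta/\beta'>1$ is a \emph{constant} ratio, and then invoking the Potter-type bounds for the regularly varying $\rho$ (from \eqref{eq:rhoass2}) together with $\rho\leq 1$ to conclude $\rho(ar)\geq c_0\,\rho(r)$ fails in the wrong direction — rather one wants $\rho(r)\geq \rho(ar)$ with a \emph{strict} gap, which follows since $\rho$ is non-increasing and $\rho(ar)/\rho(r)\to a^{-\delta}<1$ as $r\to\infty$ while on compacts the ratio is at most some constant $<1$ (or one simply absorbs the short-range part by shrinking $q$ further). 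Once the uniform ratio bound $\sup\pi'/\pi=:\kappa<1$ is in hand, any $q\in[\kappa,1)$ with $q\geq\sup\pi'$ — e.g.\ $q=\max(\kappa,\tfrac12)$ after possibly rescaling so that $\sup\pi<1$, or more carefully $q$ chosen so that $q\geq\pi'$ whenever $\pi'\leq q$ and $q\leq\pi$ whenever $\pi'<\pi$, which reduces to $\pi'\leq q\leq\pi$ and is solvable uniformly iff $\sup_{\text{pairs}}\pi'\leq\inf_{\text{pairs: }\pi'<\pi}\pi$ — works, and the construction above completes the proof.
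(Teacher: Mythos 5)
There is a genuine gap, and it sits exactly where you flagged the main obstacle: the uniform bound $\sup \pi'/\pi<1$ is false, and without it your edge-by-edge construction cannot work. Since every kernel carries the prefactor $1/\beta$, one can write $\phi^{\beta}(\x,\y)=\rho(r)$ and $\phi^{\beta'}(\x,\y)=\rho(ar)$ with $a=\beta/\beta'>1$ and $r$ ranging over all of $(0,\infty)$ as the marks and the distance vary. The ratio $\rho(ar)/\rho(r)$ tends to $1$ as $r\to0$, because $\rho$ is non-increasing with $\rho(0+)>0$; it tends to $a^{-\delta}<1$ only as $r\to\infty$, and assumption \eqref{eq:rhoass2} says nothing near the origin (for $\rho=1\wedge r^{-\delta}$ the ratio equals $1$ for all small $r$). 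Consequently, for any fixed $q<1$ the Poisson process contains pairs at small distance for which, conditionally on the shared vertex configuration, $\P\big(e\in E(\mathcal{G}^{\beta'})\big)=\phi^{\beta'}(\x,\y)>q\,\phi^{\beta}(\x,\y)\geq\P\big(e\in E(q\mathcal{G}^{\beta})\big)$, so no coupling can realise $\{e\in E(\mathcal{G}^{\beta'})\}\subset\{e\in E(q\mathcal{G}^{\beta})\}$ for such a pair. Your suggested remedies go the wrong way: shrinking $q$ tightens rather than loosens the constraint $q\geq\pi'/\pi$, the claim that the ratio is bounded below $1$ ``on compacts'' fails at $r=0$, and the final solvability condition $\sup\pi'\leq\inf\pi$ is violated by comparing a close pair (where $\pi'$ is near $\rho(0+)$) with a distant pair (where $\pi$ is near $0$). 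The upshot is that no coupling which identifies the vertex sets of $\mathcal{G}^{\beta}$ and $\mathcal{G}^{\beta'}$ pointwise can give the first inclusion.

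The paper avoids this by not keeping the vertex sets equal. Thinning the Poisson process with retention probability $q=\beta'/\beta$ and rescaling space by $q^{1/d}$ turns $\mathcal{G}^{\beta}$ into a copy of $\mathcal{G}^{\beta'}$, so $\mathcal{G}^{\beta'}$ can be realised as Bernoulli \emph{site} percolation with parameter $q$ on $\mathcal{G}^{\beta}$. A problematic short edge then belongs to $\mathcal{G}^{\beta'}$ only when both of its endpoints are retained, which happens with probability $q^2\leq q$, and the classical componentwise domination of site percolation by bond percolation at the same retention parameter yields $E(\mathcal{G}^{\beta'})\subset E(q\mathcal{G}^{\beta})$. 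This global scaling step is the missing idea; with it, the edge-level bookkeeping you set up becomes unnecessary.
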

		\begin{proof}
			The second inclusion is trivial for any $q(\beta')\in[0,1]$. The first inclusion follows by the observation that the scaling properties of the underlying Poisson process $\eta$ allow us to couple $\mathcal{G}^{\beta'}$ and $\mathcal{G}^{\beta}$ in such a way, that $\mathcal{G}^{\beta'}$ coincides precisely with Bernoulli percolation on the \emph{vertices} of $\mathcal{G}^{\beta}$ for some retention probability $q(\beta')\in(0,1)$. On any non-trivial connected graph, it is easily seen that Bernoulli percolation of edges and Bernoulli percolation of vertices with the same retention parameter $q$ can be coupled in such a way that the edge percolation configuration dominates the vertex percolation configuration. Applying this coupling to the connected components of $\mathcal{G}^{\beta}$ yields the first inclusion.
		\end{proof}\pagebreak[3]
		
		An important consequence of Lemma~\ref{lem:sprinkling} is that under the coupling
		\begin{equation}\label{eq:sprinkling}
			\P\big(\{\x,\y\}\in E(\mathcal{G}^{\beta}) 
			\mid\mathcal{G}^{\beta'} \big)\geq (1-q(\beta'))\, \rho({g^\ast} |x-y|^d),
		\end{equation}
		for all pairs $\x=(x,y),\y=(y,t)$ with $\x,\y\in \eta'$ independently of each other and independently of the occupation status of $\{\x,\y\}$ or any other edge in $\mathcal{G}^{\beta'}$. 
		\begin{proof}[Proof of Proposition~\ref{lem:perfin}]
			The estimate \eqref{eq:sprinkling} allows us to employ a simplified version\footnote{\label{percfindiscussion}
				Our proof is simpler than the proof of \cite[Lemma 2.3]{Berge02} in the 2014 arXiv-version, but comes at the expense of \emph{assuming} $\beta>\beta_c$, whereas the variants of Proposition~\ref{lem:perfin} in \cite{Berge02} and \cite{DepreWuthr19} work under the assumption of the existence of an infinite component and \emph{imply} $\beta>\beta_c$. Since we only need the weaker statement, we give a self-contained proof of Proposition~\ref{lem:perfin} and remark in passing, that the stronger version can be obtained by adapting the proof of \cite[Lemma 6.1]{DepreWuthr19} to our model.} of the renormalisation	construction used in \cite{Berge02}. The basic building blocks of the renormalisation scheme are localised clusters in $\mathcal{G}^{\beta'}$ which we construct now. Fix $\beta'\in(\beta_c,\beta)$ and let $\mathcal{C}(\beta')$ denote the infinite cluster in $\mathcal{G}^{\beta'}$. Consider the collections of boxes $$\mathfrak{B}(l)=\{ lx+[-l/2,l/2)^d \colon x\in \Zd\}, \quad l\in\mathbb{N}.$$ Let $m\in\mathbb{N}$ be given. For any box $B\in \mathfrak{B}(m)$, we write $\mathcal{V}(B)$ for those vertices of $\mathcal{C}(\beta')$ that have positions in $B$. 
			Let $\varepsilon>0$ and $\theta_0\in(0,\theta(\beta'))$. Then we may choose $m$ so large that 
			$$
			\P(|\mathcal{V}(B)|\geq \theta_0 m^d)\geq 1-\varepsilon/2\qquad\text{for all $B\in\mathfrak{B}(m)$}
			$$
			since the probability of the left hand side converges to $1$ as $m\to\infty$ due to ergodicity.
			For $B\in \mathfrak{B}(m)$, we now define the event  
			$$E_0(B)=\{ |\mathcal{V}(B)|\geq \theta_0 m^d, \mathcal{V}(B) \text{ is contained in a connected component of }\mathcal{G}^{\beta'}\cap \partial_k(B) \},$$
			where $\partial_k(B)$ denotes the $k$-neighbourhood of $B$ with respect to $\|\cdot\|_{\infty}$ in $\R^d$ and $\mathcal{G}\cap A$ is a shorthand for the subgraph of $\mathcal{G}$ induced by vertices with positions in $A\subset \R^d.$
			Since the infinite cluster is unique, there exists $k=k(m)\in\N$ such that for any $B\in\mathfrak{B}(m),$
			$$
			\P(E_0(B))>1-\varepsilon.
			$$
			If $E_0(B)$ occurs for some $B\in\mathfrak{B}(m)$, then we call the associated collection $\mathcal{V}(B)$ of vertices a $(\theta,m,k)$-\emph{precluster} (in $B$).\smallskip
			
			Our next goal is to recursively construct more and more strongly localised clusters from the preclusters in $\mathcal{G}^{\beta}$ using the independent sprinkling induced via \eqref{eq:sprinkling}. By our standing assumption on the regular variation of $\rho$, we may further increase $m$ (and thus possibly $k(m)$) such that $\rho(v)\geq c_\rho v^{-\delta^{\ast}}$ for all $v\geq {g^{\ast}}d^{d/2}m^d$ and some $\delta^{\ast}\in(\delta,2).$ Let $\lambda\in(0,1)$ be arbitrary and choose 
			$$
			\eta>\frac{2}{d}\left(\frac{1}{1-\lambda} \vee \frac{1}{1-\delta^\ast/2} \right).
			$$
			Now set
			$$\theta_n=\frac{1}{n^2}, \text{ and } \sigma_n=\lceil(n+1)^{\eta}\rceil,\quad n\in\mathbb{N}.$$
			The cubes $$B\in \mathfrak{B}\left(m \prod_{i=1}^n \sigma_i \right),$$
			are called \emph{stage}-$n$ boxes. Note that for $n\geq 1$, each stage-$n$ box $B$ contains precisely $\sigma^d_n$ stage-$(n-1)$ boxes, which we call the \emph{subboxes} of $B$. A stage-$0$ box $B$ is \emph{good} if the event $E_0(B)$ occurs, i.e.\ if it contains a $(\theta_0,m,k)$-precluster. A \emph{stage-$1$ precluster} inside a stage-$1$ box $B$ is a collection of $\theta_1\sigma_1$ preclusters, each contained in a different subbox of $B$, which are all at graph distance $1$ of each other in $\mathcal{G}^{\beta}$. Similarly, for $n>1$,
			a \emph{stage-$n$ precluster} inside a stage-$n$ box $B$ is a collection of $\theta_n\sigma_n$ stage-$(n-1)$ preclusters, each contained in a different subbox of $B$, which are all at graph distance $1$ of each other in $\mathcal{G}^{\beta}.$ If $n\geq 1$, then a stage-$n$ box is \emph{good} if it contains a stage-$n$ precluster.\smallskip
			
			Note that a stage-$n$ precluster contains at least
			$$
			v_n:=\theta_0m^d \prod_{i=1}^n \theta_n\sigma_n
			$$
			vertices. Consequently, if a stage-$n$ box $B$ is good, then there must be a connected component in $\mathcal{G}^{\beta}\cap \partial_k(\Gamma)$ of size at least $v_n$. For any stage-$n$ box $B$, let $\tilde B$ denote the union of $B$ with all its $3^d-1$-neighbouring stage-$n$ boxes (including diagonal neighbours). Since $k$ depends only on $m$, but not on $n$ it follows that there exists a stage $N$ such that for all $n\geq N$ and any stage-$n$ box $B$, we have
			$\partial_k(B)\subset \tilde{B}$ and we conclude that on the event
			$\{B \text{ is good} \}$ there exists a connected component of $\mathcal{G}^\beta$ inside $\tilde{B}$ of size at least
			$$
			v_n=\theta_0m^d \prod_{l=1}^n \theta_l\sigma_l^d\geq q_1\theta_0 m^d \prod_{l=1}^n l^{-2+ \eta d} = q_1\theta_0 m^d\left(\prod_{l=1}^n l^{\eta d}\right)^{1-\frac{2}{\eta d}}\geq q_2 \frac{\theta_0}{3^{\lambda d}}\text{vol}(\tilde B)^\lambda
			$$
			for some constants $q_1,q_2$, which implies the claim of the proposition once we show that the probability $\P(B \text{ is not good})$ can be made arbitrarily small for sufficiently large $n$.\medskip
			
			To this end, we consider for each $n$ the $n$-stage box $B_n$ containing the origin, which is sufficient due to translation invariance. Let $E_n$ denote the event that $B_n$ is good and let $F_n$ denote the event that at least $\theta_n \sigma_n$ subboxes of $B_n$ are good. It follows from Markov's inequality that
			\begin{equation}\label{eq:Markov}
				\P(F_n^\texttt{c})=\P\bigg( \sum_{B'\subset B_n \text{ subbox}}\mathds{1}\{ B' \text{ is not good}  \}> (1-\theta_n)\sigma_n^d  \bigg) \leq \frac{1}{1-\theta_n}\P(E_{n-1}^\texttt{c}),\quad n\in\mathbb{N}.
			\end{equation}
			Let now $B^{(1)},B^{(2)}$ be good subboxes of $B_n$, $n\geq 1$, and let $\mathcal{V}_1,\mathcal{V}_2$ denote their largest stage-$(n-1)$ preclusters. Note that these can be determined by considering only $\mathcal{G}^{\beta'}$ and additional edges in $\mathcal{G}^{\beta}$ within the subboxes $B^{(1)},B^{(2)}$. By observing that the estimate \eqref{eq:sprinkling} holds independently for each potential edge between $\mathcal{V}_1,\mathcal{V}_2$, we obtain that, given $\mathcal{G}^{\beta'}$ and all box-to-box connections made in previous stages,
			\begin{equation}\label{eq:firstestimate}
				\begin{aligned}
					\P(\mathcal{V}_1,\mathcal{V}_2 \text{ are not at distance }1 \text{ in }\mathcal{G}^\beta) &\leq\prod_{\substack{((x_1,t_1),(x_2,t_2))\\ \in\mathcal{V}_1\times\mathcal{V}_2}}\big(1-(1-q(\beta'))\rho(g_\ast |x_1-x_2|^d)\big)\\
					&\leq \prod_{((x_1,t_1),(x_2,t_2))\in\mathcal{V}_1\times\mathcal{V}_2}\textup{e}^{-(1-q(\beta'))\rho(g_\ast |x_1-x_2|^d)}\\
					&\leq \prod_{((x_1,t_1),(x_2,t_2))\in\mathcal{V}_1\times\mathcal{V}_2}\textup{e}^{-(1-q(\beta')) \rho(g_\ast\,\textup{diam}(B_n)^d )}\\
					& \leq \textup{e}^{-(1-q(\beta')) \rho(g_\ast\,\textup{diam}(B_n)^d )v_{n-1}^2}.
				\end{aligned}
			\end{equation}
			Now note that $\textup{diam}(B_n)^d= d^{d/2}\textup{vol}(B_n)$, and that $\rho(g_\ast d^{d/2}\,\textup{vol}(B_n)) \geq c \textup{vol}(B_n)^{-\delta^{\ast}}$ by choice of $m$ for some constant $c$. We thus conclude from \eqref{eq:firstestimate} that there exists some small constant $\nu=\nu(d,\beta',g,\rho)>0$ such that
			\begin{equation}\label{eq:secondestimate}
				\P(\mathcal{V}_1,\mathcal{V}_2 \text{ are not at distance }1 \text{ in }\mathcal{G}^\beta)\leq \textup{e}^{-\nu\, v_{n-1}^2 \textup{vol}(\Gamma_n)^{-\delta^\ast}},
			\end{equation}
			and that this estimate holds independently for any pair of good subboxes $B^{(1)},B^{(2)}$ and independently of the formation of all previous stages of the recursion.\medskip

			Combining \eqref{eq:secondestimate} with a simple union bound, we obtain that
			\begin{equation}\label{eq:E_n_est}
				1-\P(E_n|F_n)\leq \sigma_n^{2d}\textup{e}^{-\nu\, v_{n-1}^2 \textup{vol}(B_n)^{-\delta^\ast}}.
			\end{equation}
			Since 
			\begin{align*}v_{n-1}^2 \textup{vol}(B_n)^{-\delta^\ast} & = \sigma_n^{-\delta^\ast d}(\theta_0m^d)^{2-\delta^\ast}\prod_{l=1}^{n-1}\theta_l^2\prod_{l=1}^{n-1}\sigma_l^{2d-\delta^\ast d}
			\\	& \geq c\, \theta_0 m^{2d-\delta^\ast d} n^{-\delta^\ast d}((n-1)!)^{\eta(2-\delta^\ast)d-4}
			\end{align*}
			for some constant $c$, it is straightforward (if necessary by further increasing $m$) to deduce the existence of small constants $\nu_1,\nu_2$ such that for all $n\geq 1$
			$$ 
			1-\P(E_n|F_n)\leq \textup{e}^{-\nu_1 ((n-1)!)^{\nu_2}}\leq \varepsilon \theta_n,
			$$
			In particular, combining this estimate with \eqref{eq:Markov} we have that
			$$\P(E_n^\texttt{c})\leq \P(F_n^\texttt{c})+\P(E_n^\texttt{c}|F_n)\leq \varepsilon \theta_n + \frac{1}{1-\theta_n} \P(E_{n-1}^\texttt{c})\leq \varepsilon \theta_n + (1+2\theta_n) \P(E_{n-1}^\texttt{c}),\quad \mbox{ for } n\geq 1.$$
			It follows that
			$$
			\P(E_n^\texttt{c})\leq (1+3\theta_n)(\varepsilon \vee \P(E_{n-1}^{\texttt{c}})),\quad \mbox{ for } n\geq 1,
			$$
			and recursively we obtain that 
			\begin{equation}\label{eq:recursioneq}
				\P(E_n^\texttt{c})\leq 2\varepsilon\prod_{l=1}^{n}(1+3\theta_l), \quad \mbox{ for }n\in\mathbb{N}.
			\end{equation}
			Since $(\theta_n)$ is summable, it follows that the right hand side of \eqref{eq:recursioneq} is uniformly bounded by some constant multiple of $\varepsilon$ which suffices to conclude the proof, since $\varepsilon$ was chosen arbitrarily.
		\end{proof}

					\section{Recurrence}\label{secRecurrence}
					In this section, we prove the recurrence results of Theorem \ref{thmRecurTrans}. We develop continuum versions of sufficient criteria for recurrence from \cite{Berge02}. As a rule of thumb, these criteria can only work if the correlations between edges induced by the vertex marks are not too strong. In particular it is necessary that the expected number of neighbours of $0$ has a finite second moment, therefore all  recurrence conditions in Theorem~\ref{thmRecurTrans} require (at least) that $\gamma<1/2$. We work with general geometric random graphs with vertices given by an (unmarked) unit intensity Poisson process. Therefore, to keep the notation concise when we specialise to our model, we break with our conventional notation and identify vertices $\x=(x,s)$ with their location $x$ throughout this section. Consequently, we also view potential edges as elements of \smash{$\eta^{{[2]}}$} or $\eta_0^{_{[2]}}$ instead of the corresponding marked sets. 

						\begin{lemma}\label{thrm:recurrence2}
							Let $\X_\infty$ be a unit intensity Poisson process on \(\R^2\). Consider a random graph $\mathcal{H}$ on this point process, where points $x,y\in \X_\infty=V(\mathcal{H})$ are joined by an edge with conditional probability \(P_{|x-y|}\), given $\X_\infty$. If \[\limsup_{r\to\infty}r^\alpha P_{r}<\infty,\]
							for some $\alpha>4$, then any infinite component of $\mathcal{H}$ is recurrent.
						\end{lemma}
					
Lemma \ref{thrm:recurrence2} is a continuum adaptation of a result by Berger \cite{Berge02} for general lattice-based long-range percolation models\footnote{{Note that like Berger's version \cite[Theorem 3.10]{Berge02}, the result remains true if we only assume translation invariance. However, we restrict ourselves to connection probabilities that depend only on distance, since it saves some notation. Furthermore, \cite[Theorem 3.10]{Berge02} includes the case $\alpha=4$. After thorough inspection of the arguments in \cite{Berge02}, we believe that this is only justified in the case of percolation models with \emph{uncorrelated} edges. Otherwise, one needs some control on the correlations, cf. our recurrence proof for the preferential attachment kernel below.}}. Note that, in particular, no assumptions on the (in)dependence of the edges are required. Before we prove Lemma~\ref{thrm:recurrence2} below, we discuss its consequences for the kernels we are interested in. 
					
\begin{proof}[{Proof of Theorem~\ref{thmRecurTrans}, recurrence for min, sum and product kernel if $d=2$}]  We wish to apply Lemma~\ref{thrm:recurrence2}. We have $g^{\textup{sum}}\leq g^{\textup{min}}$ and we are only interested in upper bounds on the connection probabilities, thus we only need to treat the sum and product kernels. Recall that by \eqref{eq:rhoass2} we may bound $\rho(z)\leq c_\rho z^{-\delta}, z>0$. For $S,T$ independent on $(0,1)$ uniformly distributed random variables we thus find, for $r>1$,
						\begin{equation}\label{eq:PrBound1}\begin{aligned}
							P_r & = 2\E\big[\tfrac12\rho(g(S,T)r^{-2})\big] \leq 2\left(1-\E\left[\textup{e}^{-\rho(g(S,T)r^{-2})}\right]\right)\\
							& \leq 2\left(1-\E\left[\textup{e}^{-g(S,T)^{-\delta}{c_\rho r^{-2\delta}}}\right]\right).\end{aligned}
						\end{equation}
						The expectation on the right hand side is the Laplace transform $\hat{F}_{Z(g)}$ of $Z(g)=g(S,T)^{-\delta}$ evaluated at 
						\smash{$\lambda={c_\rho r^{-2\delta}}.$} Now observe that, for $x>0$,
						\begin{equation}\label{eq:kernelbd}
							\P\big(Z(g)>x \big)= \begin{cases}
								\P\big( S^{-\frac \gamma2}+ T^{-\frac\gamma2}>{\beta^{-\frac12} x^{\frac{1}{2\delta}}}\big), &\text{ if } g=g^{\textup{sum}},\\
								\P\big( S^{-\gamma}T^{-\gamma}>{\beta^{-\frac12}  x^{\frac{1}{\delta}}}\big), &\text{ if } g=g^{\textup{prod}}.
							\end{cases}
						\end{equation}
						Now the right hand side of \eqref{eq:kernelbd} is the tail probability of either the sum or the product of two independent power-law random variables and it is easy to see that the tail index $\alpha\in(0,\infty)$ of a heavy-tailed distribution remains unchanged under independent multiplication or summation, respectively. Evaluating the corresponding tail indices, we now infer from \eqref{eq:kernelbd} that there exists some slowly varying function $\ell$ (depending on $g$), such that 
						\begin{equation}\label{eq:tailvariation}
							\P\big(Z(g)>x \big)= \ell(x) x^{-\frac{1}{\gamma\delta}}\quad \mbox{ for } x>0.
						\end{equation}
						{By monotonicity {of $P_r$ in $\gamma$}  we may assume that $\gamma>1/\delta$} (recall that $1/\delta<1/2$ by assumption) and in this case it is a consequence of Karamata's Tauberian Theorem \cite[Corollary 8.1.7]{BinghGoldiTeuge87} that \eqref{eq:tailvariation} is equivalent to the existence of some $\varepsilon_g>0$ and some slowly varying function $\tilde{\ell}$ such that
						\begin{equation*}
							{	1-\hat{F}_{Z(g)}(\lambda)} = \tilde{\ell}(\lambda) \lambda^{\frac{1}{\gamma\delta}}\quad \text{for }\lambda\in(0,\varepsilon_g).
						\end{equation*}
						Inserting this asymptotic bound into \eqref{eq:PrBound1} yields that there exits a third slowly varying function $\bar{\ell}$ such that
						\smash{$P_r \leq \bar{\ell}(r)r^{-{2}/{\gamma}}$} for all sufficiently large $r$,
						which, by Lemma~\ref{thrm:recurrence2}, implies recurrence of the corresponding graphs whenever $\gamma<1/2$.
					\end{proof}
						In order to apply the Nash-Williams criterion (Theorem~\ref{thrm:nash1}) and prove Lemma \ref{thrm:recurrence2}, we adapt the the approach from \cite{Berge02} to our setting. To begin with, we turn the graph into an electrical network by assigning conductance $1$ to each edge.
						
						\medskip
						
						\noindent\emph{Discretisation:} We assign to each point $x=(x_1,x_2)\in\Z^2$ the half-open cube $\Gamma_x=[x_1-1/2,x_1+1/2)\times [x_2-1/2,x_2+1/2)$. For each $x$, we collapse all Poisson points inside $\Gamma_x$ into $x$, remove any resulting loops and retain all other edges with their conductances. This yields a long range electrical network with parallel edges that dominates the original graph in terms of effective conductance. Now, if a point pair $\{x,y\}$ is joined by $m>1$ parallel edges, we remove $m-1$ of them and increase the conductance on the remaining edge to $m$. The parallel law for electrical networks implies that the effective conductance of the electrical network is not affected by this step. We denote the resulting network  on $\Z^d$ by $\mathcal{L}=\mathcal{L}(\mathcal{H})$. Note that
						\[
						\P(\{x,y\}\in E(\mathcal{L}))\leq \P(\exists \{u,v\}\in E(\mathcal{H}): u\in \Gamma_x, v\in \Gamma_y)\leq \sup_{x^\ast\in \Gamma_x, y^\ast\in \Gamma_y} P_{|x^{\ast}-y^{\ast}|},
						\]
						where we have used that $\E|\X_\infty\cap\Gamma_x|=1$. Since the diameter of the cubes $K_x,x\in \Z^2$ is uniformly bounded, it follows that if    
						\[\limsup_{r\to\infty}r^\alpha P_{r}<\infty\]
						for some $\alpha>0$, the same must be true for the connection probabilities of lattice points in $\mathcal{L}$.
						
						\medskip
						
						\noindent\emph{Edge projection:} Given the lattice-based long-range model $\mathcal{L}$, we now project the long edges to nearest-neighbour edges: For every long (i.e.\ not nearest-neighbour) edge $\{x,y\}=\{y,x\}$ in $\mathcal{L}$ with conductance \(c_{xy}\), we remove the edge $\{x,y\}$ and increase the conductance for every nearest-neighbour bond in the shortest rectangular nearest-neighbour cycle containing $x$ and $y$  by \(c_{xy}(|x_1-y_1|+|x_2-y_2|)\). Due to the serial and parallel laws, the effective conductance is not decreased by this step. The resulting nearest-neighbour network is denoted by $\mathcal{N}$.
						
					\begin{lemma}\label{lemma:projected}
							Let $\X_\infty$ be a unit intensity Poisson process on \(\R^2\). Consider a random graph on this point process, where points $x,y\in \X_\infty$ are connected with probability \(P_{|x-y|}\), 
								such that \[\limsup_{r\to\infty}r^\alpha P_r<\infty,\]
								for some $\alpha>3$.
							For the electrical network \(\mathcal{N}\) with conductances $(C_e)_{e\in E(\Z^2)}$ constructed above, we have that:
							\begin{enumerate}[(1)]
								\item All conductances are finite almost surely.
								\item The effective conductance of the network is bigger or equal to the effective conductance of the original network.
								\item The distribution of edge conductances in \(\mathcal{N}\) is shift invariant.
								\item If $\alpha>4$, then the conductance \(C_e\) of an edge has a Cauchy tail, i.e.\ there exists a constant \(c\) such that \(\P(C_e>cn)\leq n^{-1}\) for every \(n\geq 1\).
							\end{enumerate}
						\end{lemma}
						To prove the lemma, it is convenient to first introduce some notation. For $\ell\in \N$ and any nearest-neighbour edge $e\in\E(\Z^2)$, let $\Pi_\ell(e)$ denote the set of all potential edges in $\X_\infty^{[2]}$ which contribute an amount of $\ell$ to $C_e$, i.e.\ pairs $\{x,y\}\in \X_\infty^{[2]}$ which are discretised to $(x'_1,x'_2),(y'_1,y'_2)$ with $|x'_1-x'_2|+|y'_1-y'_2|=\ell$. Since the discretisation step shifts points by a uniformly bounded distance, it follows that there exist  global constants $d_1,d_2$, such that
						\begin{equation}\label{eq:distanceconversion}
						d_1|x-y|\leq |x'_1-x'_2|+|y'_1-y'_2|\leq (d_2|x-y|) \vee 1
						\end{equation}
						for any $\{x,y\}\in\X_\infty$, and since $\X_\infty$ is a homogeneous Poisson point process we have that
						\begin{equation}\label{eq:exppicounts}
							\E |\Pi_\ell(e)|\asymp \ell^2\quad \text{ and }\quad \E |\Pi_\ell(e)|^2\lesssim \ell^4,
						\end{equation}
						as $\ell\to\infty$ for any $e\in\E(\Z^d)$.
						 
						\begin{proof}[Proof of Lemma~\ref{lemma:projected}]
							Our construction ensures that assertions (2) and (3) are satisfied. Let us now show (1). By (3), it suffices to consider the fixed edge $e_0=\{(0,0),(1,0)\}$. Clearly, it holds that
							$$C_{e_0}=\sum_{\ell\in\N}\ell \sum_{\{x,y\}\in \Pi_\ell(e_0)}\mathds{1}\{\{x,y\}\in E(\mathcal{H})\}$$
							By \eqref{eq:distanceconversion} and \eqref{eq:exppicounts}, we have that the probability that the $\ell$-th term in the outer sum is non-zero is at most of order
							\[
							\ell^2 P_{\ell/d_2}\lesssim \ell^{2-\alpha}.
							\]
							Consequently, $C_{e_0}$ is almost surely finite if $\alpha>3$ by an application of the Borel-Cantelli Lemma. Finally, to show (4), we apply Markov's inequality and obtain
							\[
							\P(C_{e_0}>n)\leq \frac{\E C_{e_0}}{n},
							\]
							which implies (4), since
							\[
							\E C_{e_0}\lesssim \sum_{\ell=1}^\infty \ell^3 P_{\ell/d_2} \lesssim \ell^{3-\alpha}< \infty
							\]
							if $\alpha>4$, where we have again used \eqref{eq:distanceconversion} and \eqref{eq:exppicounts}.
								\end{proof}

							The key to establishing Lemma \ref{thrm:recurrence2} is one final result from \cite{Berge02}, which we cite without~proof.
							\begin{theorem}[{\cite[Theorem 3.9]{Berge02}}]\label{thrm:cauchy}
								Let \(G\) be a random electrical network on the lattice \(\Z^2\), such that all of the edges have the same conductance distribution, and this distribution has a Cauchy tail. Then \(G\) is almost surely recurrent.
							\end{theorem}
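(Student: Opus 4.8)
The plan is to prove recurrence via the Nash--Williams criterion (\Cref{thrm:nash1}): it suffices to exhibit pairwise disjoint edge-cutsets $(\Pi_\alpha)_\alpha$ separating the origin from infinity with $\sum_\alpha\big(\sum_{e\in\Pi_\alpha}C_e\big)^{-1}=\infty$ almost surely. Write $B_r=[-r,r]^2\cap\Z^2$. For each $j\ge1$ I would decompose the edges of $\Z^2$ lying inside the dyadic annulus $B_{2^{j+1}}\setminus B_{2^j}$ into $M_j\asymp2^j$ concentric ``square ring'' cutsets $\Pi_{j,1},\dots,\Pi_{j,M_j}$, each consisting of $\Theta(2^j)$ edges and each separating the origin from infinity; over all $j\ge1$ and $1\le i\le M_j$ these cutsets are pairwise disjoint. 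Setting $X_j:=\sum_{i=1}^{M_j}\big(\sum_{e\in\Pi_{j,i}}C_e\big)^{-1}$, the problem reduces to showing $\sum_{j\ge1}X_j=\infty$ almost surely.

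First I would reduce to conductances bounded below: replacing each $C_e$ by $C_e\vee c_0$ for a fixed $c_0>0$ can only increase the effective conductance from the origin to infinity and does not affect the Cauchy-tail hypothesis, so we may assume $C_e\ge c_0$. Then $\sum_{e\in\Pi_{j,i}}C_e\ge c_0|\Pi_{j,i}|=\Theta(2^j)$, whence $0\le X_j\le\Lambda$ for a deterministic constant $\Lambda$. The core estimate is a first-moment bound expressing the fact that a sum of $m$ conductances with a Cauchy tail is typically of order $m\log m$ rather than $m$: the Cauchy-tail bound $\P(C_e>u)=O(1/u)$ gives $\E[C_e\wedge T]=O(\log T)$, so that with $T_j:=4^jj$ we obtain $\E\big[\sum_e(C_e\wedge T_j)\big]=O(4^jj)$ for the sum over the annulus, while $\P(\exists e:\,C_e>T_j)=O(1/j)$; combining these with Markov's inequality shows $\P\big(\sum_eC_e\le C'4^jj\big)\ge\tfrac14$ for all large $j$. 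On that event Cauchy--Schwarz gives $X_j\ge M_j^2/\sum_eC_e\ge c/j$, so $\E[X_j]\ge c'/j$ for all large $j$ and hence $\sum_j\E[X_j]=\infty$.

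The remaining, and decisive, step is to upgrade $\sum_j\E[X_j]=\infty$ to $\sum_jX_j=\infty$ almost surely. When the conductances $(C_e)$ are independent the variables $(X_j)_j$ are independent, each being a function of the conductances in the disjoint annulus $B_{2^{j+1}}\setminus B_{2^j}$; a bounded independent sequence with non-summable means has an almost surely divergent sum, for instance because $\E\exp\big(-t\sum_{j\le N}X_j\big)=\prod_{j\le N}\E e^{-tX_j}\le\exp\big(-c_t\sum_{j\le N}\E[X_j]\big)\to0$ for every $t>0$, forcing $\P(\sum_jX_j<\infty)=0$. In the situations where the theorem is applied the $C_e$ are merely stationary, but they are measurable functions of an underlying independent field (Bernoulli percolation, resp.\ a Poisson process) and inherit from it enough spatial decorrelation to decouple well-separated scales; there I would restrict attention to a subsequence of dyadic scales $j_1<j_2<\cdots$ spread far enough apart to make the corresponding annuli (nearly) independent, choosing the gaps to grow slowly enough (e.g.\ $j_\ell\asymp\ell\log\ell$, so that $\sum_\ell\E[X_{j_\ell}]\gtrsim\sum_\ell(\ell\log\ell)^{-1}=\infty$) that the means remain non-summable, and apply the independent-case argument to $(X_{j_\ell})_\ell$. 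Either way $\sum_jX_j\ge\sum_\ell X_{j_\ell}=\infty$ almost surely, and \Cref{thrm:nash1} yields that $G$ is recurrent almost surely.

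I expect this last step to be the main obstacle. A purely first-moment/Borel--Cantelli approach, trying to show $X_j\gtrsim1/j$ individually for all large $j$, is doomed: for a genuine Cauchy tail the threshold needed to make $\P(\sum_eC_e>\mathrm{threshold})$ summable in $j$ inflates the resulting bound on $X_j^{-1}$ by a polynomial factor in $j$, converting the harmonic-type divergent series $\sum1/j$ into a convergent one. It is therefore essential to exploit both the averaging over the $\Theta(2^j)$ cutsets inside a single scale and the (at least asymptotic) independence of different scales, rather than to control any individual cutset.
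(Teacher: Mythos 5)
Your per-scale skeleton is fine and needs no independence: truncation plus Markov gives $\P\big(\sum_{e}C_e\le C'4^jj\big)\ge\tfrac14$ for the annulus at scale $j$, and Cauchy--Schwarz then yields $X_j\ge c/j$ on that event, so $\E[X_j]\gtrsim 1/j$. The genuine gap is the final upgrade to almost sure divergence, which you base entirely on independence (or ``near independence'') of well-separated scales. The theorem, as quoted from Berger, makes \emph{no} independence assumption whatsoever -- this is exactly why it is usable here: the statement preceding it in the paper stresses that no independence is required, and in the application (\Cref{thrm:recurrence2} via \Cref{lemma:projected}) the conductances are produced by the projection steps (A)--(E), which smear a single long edge of the continuum graph over a whole range of nearest-neighbour bonds; moreover \Cref{thrm:recurrence2} itself explicitly allows dependent edge occupancies. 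So the conductances in distant dyadic annuli can be strongly correlated, your ``choose scales $j_\ell$ far apart so the annuli are nearly independent'' step has no justification at the required level of generality, and a proof conditional on such decorrelation does not prove the stated theorem. Your closing claim that exploiting independence of scales is ``essential'' is also not correct.

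The missing idea is that one can avoid both Borel--Cantelli and independence by making the per-scale failure probability \emph{arbitrarily small, uniformly in the scale}, and then using monotonicity of the partial Nash--Williams sums. For any $\varepsilon>0$, enlarging the constant in the truncation/Markov step gives $\P\big(C_{\Pi}\le \lambda_\varepsilon\,|\Pi|\log|\Pi|\big)\ge1-\varepsilon^2$ for every ring cutset $\Pi$, with $\lambda_\varepsilon$ independent of the scale (this uses only the identical Cauchy-tailed marginals). Weighting the cutsets at radius $n$ by $1/(n\log n)$ and applying Markov to the weighted count of ``bad'' cutsets, one gets that with probability at least $1-\varepsilon$ the partial sum $\sum_{n\le N}C_{\Pi_n}^{-1}$ is at least a constant multiple of $\varepsilon\lambda_\varepsilon^{-1}\log\log N$ for each fixed $N$. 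Since the full series dominates every partial sum, letting $N\to\infty$ along the nested events $\{\sum_n C_{\Pi_n}^{-1}\ge a_N\}$ gives $\P\big(\sum_n C_{\Pi_n}^{-1}=\infty\big)\ge1-\varepsilon$, and $\varepsilon$ was arbitrary; \Cref{thrm:nash1} then yields almost sure recurrence. This is in essence Berger's argument, and it is the step your proposal would need to replace the decoupling heuristic with.
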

							Notice that we do not require any independence in the theorem.
							\begin{proof}[{Proof of Lemma \ref{thrm:recurrence2}}]
								By the steps (A)-(E) and the remarks following each step, the conductance between two lattice points of the projected electrical network is bigger than the effective conductance  between their preimages in the original graph in the continuum. Therefore, if the projected electrical network is recurrent, so is the original graph. By Lemma \ref{lemma:projected}, the conductances of the projected network have Cauchy tails and by Theorem \ref{thrm:cauchy}, this implies that the network is recurrent.
							\end{proof}
						
\paragraph{Recurrence result for preferential attachment kernel in $d=2$.}							We wish to apply Theorem~\ref{thrm:cauchy}, but we unfortunately cannot rely on Lemma~\ref{thrm:recurrence2}, since even for $\gamma<1/2$ the preferential attachment kernel induces a model with $P_{x,y}\asymp |x-y|^{-4}$, as we will see below. To address this complication, we need to supplement first moment bounds on the edge probability by bounds on edge correlations. To formulate bounds on connection probabilities in a concise manner, we rely on translation invariance and consider, as an auxiliary object, the unit intensity Poisson process $\eta_{0,r}$ on $\R^2$ with the two additional points $(0,0)$ and $(r,0)$, where $r>0$. Throughout the remainder of the proof, we always keep $\rho(z)\asymp z^{-\delta}$ with $\delta>2$ and an instance of the preferential attachment kernel $g^{\textup{pa}}$ with $\gamma<1/2$ fixed. 
	\begin{lemma}\label{lem:connprobcorr}
	For $r>0$ and $s\in(0,1]$ write 
		\[Q_r(s)=\int_0^1\rho(\beta^{-1} g^{\textup{pa}}(s,u)r^2)\,\textup{d}u\]
	for the probability that $((0,0),s)$ is connected with a given point at distance $r$.
	If $\gamma<1/2$ and $\delta>2$, then we have for $s\in(0,1)$ and $r>1$,
								\[
								Q_r(s)\lesssim \begin{cases} \mathds{1}\Big\{s<r^{-\frac{2}{\gamma}}\Big\}+\mathds{1}\Big\{r^{-\frac{2}{\gamma}}\leq s < r^{-2}\Big\}\,r^{-\frac{2}{1-\gamma}}s^{-\frac{\gamma}{1-\gamma}}+\mathds1\big\{r^{-2}\leq s\big\} \,r^{-\frac2\gamma}s^{-\frac{1-\gamma}\gamma}\\ \text{\hspace{10.2cm} if }\gamma>1/\delta,\\
									\mathds{1}\Big\{s<r^{-\frac{2}{\gamma}}\Big\}+\mathds{1}\Big\{r^{-\frac{2}{\gamma}}\leq s < r^{-2}\Big\}\,r^{-\frac{2}{1-\gamma}}s^{-\frac{\gamma}{1-\gamma}}+\mathds1\big\{r^{-2}\leq s\big\} \,r^{-2\delta}s^{1-\delta} \\ \text{\hspace{10.2cm} if }\gamma<1/\delta.
								\end{cases}\]
							\end{lemma}
							Here and in the remainder of this section, we use the notation $f(x)\lesssim g(x)$, if there is a constant $C<\infty$ (which may depend on $\beta,\gamma,\delta$, the function \(\rho\) and the kernel $g^{\textup{pa}}$, but not on $r$ and $s$) such that $f(x)\leq Cg(x)$ for all sufficiently large $x$.
							\begin{proof}[Proof of Lemma~\ref{lem:connprobcorr}]
								We first assume that $\gamma\in(1/\delta,1/2)\neq\emptyset$ (recall that $\delta>2$). Due to our assumption on $\rho$ there exists a constant \(\tilde{c}\in (1,\infty)\) depending on the choice of the function \(\rho\), 
								the kernel $g^{\textup{pa}}$ and $\beta$,
								such that the connection probability (for {\(0<s\le t<1\)}) is bounded from above by
								\[
								\rho(g^{\textup{pa}}(s,t)r^2)\leq \tilde{c} \big(1\wedge (s^{\gamma}t^{(1-\gamma)}r^2)^{-\delta}\big).
								\]
								Integrating over the mark of $(r,0)$ with $r>1$ fixed, we obtain that
								\[
								Q_r(s)\lesssim \int_{0}^s 1\wedge (u^{\gamma}s^{(1-\gamma)}r^2)^{-\delta}\,\textup{d}u + \int_{s}^1 1\wedge (s^{\gamma}u^{(1-\gamma)}r^2)^{-\delta}\,\textup{d}u =: I_1(s) + I_2(s). 
								\]
								Evaluating the integrals yields
								\begin{align*}
									I_1(s) & \lesssim \mathds{1}\{s< r^{-2} \}\,s + \mathds{1}\{r^{-2} \leq s \}\Big(r^{-\frac{2}{\gamma}}s^{-\frac{1-\gamma}{\gamma}} + r^{-2\delta-\frac{2}{\gamma}(1-\gamma\delta)}s^{-(1-\gamma\delta)-\frac{1-\gamma}{\gamma}(1-\gamma\delta)} \Big)\\
									& \lesssim\mathds{1}\{s< r^{-2} \}\,s+\mathds{1}\{r^{-2} \leq s \}\,r^{-\frac{2}{\gamma}}s^{-\frac{1-\gamma}{\gamma}},
								\end{align*}
								and
								\begin{align*}
									I_2(s) & \lesssim \mathds{1}\Big\{s< r^{-\frac{2}{\gamma}} \Big\}(1-s) \\
									& \phantom{\lesssim} + \mathds{1}\Big\{r^{-\frac{2}{\gamma}} \leq s < r^{-2} \Big\}\Big(r^{-\frac{2}{1-\gamma}}s^{-\frac{\gamma}{1-\gamma}}+r^{-2\delta-\frac{2}{1-\gamma}(1-(1-\gamma)\delta)}s^{-\gamma\delta-\frac{\gamma}{1-\gamma}(1-(1-\gamma)\delta)}\Big)\\
									& \phantom{\lesssim} + \mathds{1}\{r^{-2}\leq s \}\,r^{-2\delta}s^{1-\delta}\\
									&\lesssim \mathds{1}\Big\{s< r^{-\frac{2}{\gamma}}\Big\} + \mathds{1}\Big\{r^{-\frac{2}{\gamma}} \leq s < r^{-2} \Big\}\,r^{-\frac{2}{1-\gamma}}s^{-\frac{\gamma}{1-\gamma}}+\mathds{1}\{r^{-2}\leq s \}\,r^{-2\delta}s^{1-\delta},
								\end{align*}
								where we have used for $I_1$ that $\gamma>1/\delta$ and for $I_2$ that $\gamma<1/2<(\delta-1)/\delta.$ Combining both expressions and using that both $r^{-2/\gamma}s^{-(1-\gamma)/\gamma}\geq r^{-2\delta}s^{1-\delta}$ (where $\gamma>1/\delta$) precisely if $s\geq r^{-2}$,  as well as $r^{-2/(1-\gamma)}s^{-\gamma/(1-\gamma)}> s$ precisely if $s<r^{-2}$, we obtain that
								\[
								Q_r(s)\lesssim \mathds{1}\Big\{s<r^{-\frac{2}{\gamma}}\Big\}+\mathds{1}\Big\{r^{-\frac{2}{\gamma}}\leq s < r^{-2}\Big\}\,r^{-\frac{2}{1-\gamma}}s^{-\frac{\gamma}{1-\gamma}}+\mathds1\big\{r^{-2}\leq s\big\} \,r^{-\frac2\gamma}s^{-\frac{1-\gamma}\gamma},
								\] 
								as claimed. The calculation for $\gamma<1/\delta$ proceeds analogously, the only difference being that
								\[
								I_1(s)\lesssim\mathds{1}\{s< r^{-2} \}\,s+\mathds{1}\{r^{-2} \leq s \}\,r^{-2\delta}s^{1-\delta},
								\]
								and that $r^{-2/\gamma}s^{-(1-\gamma)/\gamma}\leq r^{-2\delta}s^{1-\delta}$ if $s\geq r^{-2}$, which yields
								\[
								Q_r(s)\lesssim \mathds{1}\Big\{s<r^{-\frac{2}{\gamma}}\Big\}+\mathds{1}\Big\{r^{-\frac{2}{\gamma}}\leq s < r^{-2}\Big\}\,r^{-\frac{2}{1-\gamma}}s^{-\frac{\gamma}{1-\gamma}}+\mathds1\big\{r^{-2}\leq s\big\} \,r^{-2\delta}s^{1-\delta},
								\] 
								in this case.
						\end{proof}
						The reason for our study of $Q_r(s)$ is the following result, which provides sufficient conditions to apply Theorem~\ref{thrm:cauchy}. By Lemma~\ref{lemma:projected} (1), we have that almost surely
						\begin{equation}\label{eq:DefKConductance}
						C_e \leq \lim_{N\to\infty} C^{(N)}_e<\infty,
						\end{equation}
						 where
						\[
						C^{(N)}_e = \sum_{\ell =1}^N \ell \sum_{\{x,y\}\in \Pi_\ell(e)}\mathds{1}\{\{x,y\}\in E(\mathcal{H})\}, \quad N\geq 1,
						\]
						as long as the point-to-point connection probabilities $P_{|x-y|}$ are {of order at most $|x-y|^{-\alpha}$ for some $\alpha>3$}.
						\begin{proposition}\label{prop:cauchytailsSI}
							Let $\X_\infty$ be a unit intensity Poisson process on \(\R^2\) with an additional point at the origin and random graph on this point process in which points $x,y\in \X_\infty$ are connected with probability \(P_{|x-y|}\) where $(P_r)_{r>0}$ is such that \[\limsup_{r\to\infty}{r^4 P_r<\infty} \text{ and }\limsup_{N\to\infty} \frac{\E \big(C^{(N)}_e\big)^2}{N}<\infty \text{ for any }e\in E(\Z^d).\] Then the connected component of this graph is almost surely recurrent.
						\end{proposition}
						Proposition~\ref{prop:cauchytailsSI} is the reason why we need the restriction $\gamma<1/3$ for the preferential kernel in Theorem~\ref{thmRecurTrans}. As the calculation at the end of this section shows, the sufficient condition in Proposition~\ref{prop:cauchytailsSI} does not hold for this kernel if $\gamma\geq 1/3$.
						\begin{proof}[Proof of Proposition~\ref{prop:cauchytailsSI}]
							The statement follows from Theorem~\ref{thrm:cauchy} upon verifying the Cauchy-tail property for the projected conductances $C_e$. Fix a large integer $n$ and consider the event
							\[
							E_n=\Big\{\exists \{x,y\}\in \bigcup_{\ell \geq n}\Pi_\ell(e) : \{x,y\}\in E(\mathcal{H}) \Big\}.
							\]
							Arguing precisely as in the proof of Lemma~\ref{lemma:projected}(1) and using that $P_r\lesssim r^{-4}$, we can find $K_1,K_2<\infty,$ such that
							\[
							\P(E_n)\leq K_1 \sum_{\ell \geq n} \ell^2 P_\ell\leq K_2 n^{-1}.
							\] Similarly, we can bound
							\[
							\E C_e^{(n)}\leq K_1 \sum_{l\leq n} l^3 P_l \leq K_2 \log n
							\]
							for suitable $K_1,K_2<\infty.$ It follows that 
							\[
							\P(C_e > n )\leq \P\Big(C_e^{(n)}>{n}\Big) + \P(E_{n}) \lesssim \P\Big(|C_e^{(n)}-\E C_e^{(n)}|^2 >\Big|{n}-\E C_e^{(n)}\Big|^2\Big)+ n^{-1},
							\]
							and $\E C_e^{(n)}\lesssim \log n$ together with the assumption on the second moment $\E (C^{(N)}_e)^2$ and Markov's inequality imply the existence of some constant $C<\infty$ such that
							\[
							\P\Big(|C_e^{(n)}-\E C_e^{(n)}|^2 >\Big|n-\E C_e^{(n)}\Big|^2\Big)\leq \P\Big(|C_e^{(n)}-\E C_e^{(n)}|^2 >\Big(\frac{n}{2}\Big)^2\Big) \leq \frac {4C} n, 
							\]
							which establishes the Cauchy tail.
						\end{proof}
We can now conclude the proof of the recurrence result for the preferential attachment kernel. 
\begin{proof}[{Proof of Theorem~\ref{thmRecurTrans}, recurrence for preferential attachment kernel if $d=2$}] We need \linebreak to verify the assumption of Proposition~\ref{prop:cauchytailsSI}. 
From Lemma~\ref{lem:connprobcorr}, it follows that for any $r>1$,
						\[
						P_r=\int_0^1 Q_r(s) \,\textup{d}s \lesssim r^{-\frac{2}{\gamma}}+ r^{-\frac2\gamma(1-\frac{\gamma}{1-\gamma})-\frac{2}{1-\gamma}} + r^{-2(1-\frac{1-\gamma}{\gamma})-\frac{2}{\gamma}} \leq 3 r^{-4},
						\]
						where we have used that $\gamma<1/2$ to evaluate the integral. Note that we only need to consider the case that $\gamma>1/\delta$, since $P_r$ is non-decreasing in $\gamma$ for each $r$. For the derivation of an upper bound for the second moment of $\E C_e^{(N)}$, note that by translation invariance we may restrict our attention to the bond $e_0:=\{(0,0),(1,0)\}$. To lighten notation, we write $C^{(N)}:=C_{e_0}^{(N)}$ and $\Pi_\ell:=\Pi_\ell(e_0), \ell\in\N$, for the remainder of the proof and use the variables $e,f$ to denote generic potential edges in $\eta^{[2]}$. We now calculate
						\begin{equation}\label{eq:C^Nbd}
							\begin{aligned}
								\E(C^{(N)})^2 & \lesssim \E\Big( \sum_{\ell=1}^N \sum_{m=1}^N \ell m \sum_{(e,f)\in \Pi_\ell\times \Pi_m}\mathds{1}\{e,f\in E(\mathcal{G}^\beta)\} \Big)\\
								& \leq 2 \E\Big( \sum_{\ell=1}^N \sum_{m=1}^\ell \ell m \sum_{(e,f)\in \Pi_\ell \times \Pi_m}\mathds{1}\{e,f\in E(\mathcal{G}^\beta)\} \Big)\\
								& = 2 \sum_{\ell=1}^N \sum_{m=1}^{\ell-1} \ell m \Big[\E \Sigma^{\ell,m}_{0}+ \E \Sigma^{\ell,m}_{1}\Big] +2 \sum_{\ell=1}^N \ell^2 \E \Sigma^{\ell}.
							\end{aligned}
						\end{equation}
						Note that we have used that the contribution from terms with $\ell>m$ is in distribution the same as from terms with $\ell<m$ in the second step.	The three random variables $\Sigma^{\ell,m}_{0},\Sigma^{\ell,m}_{1}$ and $\Sigma^{\ell}$ are given as 
						\begin{eqnarray*}
							\Sigma^{\ell,m}_{0} & = & \sum_{\substack{(e,f)\in \Pi_\ell\times\Pi_m:\\ e\cap f=\emptyset}}\mathds{1}\{e,f\in E(\mathcal{G}^\beta)\},\\
							\Sigma^{\ell,m}_{1} & = & \sum_{\substack{(e,f)\in \Pi_\ell\times\Pi_m:\\ |e\cap f|=1}}\mathds{1}\{e,f\in E(\mathcal{G}^\beta)\},\\
							\Sigma^\ell 	& = & \sum_{e\in\Pi_\ell}\mathds{1}\{e\in E(\mathcal{G}^\beta)\},
						\end{eqnarray*}
						respectively. We now bound the expectations of these random variables using \eqref{eq:distanceconversion} and \eqref{eq:exppicounts}. We obtain
						\begin{equation}\label{eq:firstsum}
							\E\Sigma^\ell  \lesssim  \ell^2\, P_\ell,
						\end{equation}
						as well as 
						\begin{equation}\label{eq:secondsum}
							\begin{aligned}
								\E \Sigma^{\ell,m}_{0} & = \E \Big(\sum_{e\in\Pi_\ell}\mathds{1}\{e\in E(\mathcal{G}^\beta)\} \sum_{\substack{f\in\Pi_m:\\ f\cap e=\emptyset}}\mathds{1}\{f\in E(\mathcal{G}^\beta) \}\Big) \\
								& \leq\E \Big(\sum_{e\in\Pi_\ell}\mathds{1}\{e\in E(\mathcal{G}^\beta) \}\Big) \E \Big(\sum_{e\in\Pi_m}\mathds{1}\{e\in E(\mathcal{G}^\beta) \}\Big)\\
								& \lesssim \ell^2\, P_\ell \; m^2\, P_m,
							\end{aligned}
						\end{equation}
						where we have used that edges which have no vertex in common are sampled independently of each other. Finally, 
						\begin{equation}\label{eq:thirdsum}
							\begin{aligned}
								\E \Sigma^{\ell,m}_{1} & \lesssim \E \Big(\sum_{\{x,y\}\in\Pi_\ell}\sum_{\substack{z\in\eta:\\ m/d_1<|z-y|\leq m/d_2}}\mathds{1}\big\{\{\{x,y\},\{y,z\}\}\subset E(\mathcal{G}^\beta) \big\}\Big) \\
								& \lesssim  \ell^2\,m\, P_{\ell,m},
							\end{aligned}
						\end{equation}
						where we used the notation 
						\[
						P_{\ell,m}=\int_{0}^{1} Q_\ell(s)Q_m(s)\,\textup{d}s
						\] 
						for the probability that two potential edges of length $\ell$ and $m$, respectively, with one vertex in common are present in $E(\mathcal{G}^\beta)$. Inserting \eqref{eq:firstsum}--\eqref{eq:thirdsum} into \eqref{eq:C^Nbd} yields
						
						\[
						\E(C^{(N)})^2 \lesssim \sum_{\ell=1}^N \ell^3 \sum_{m=1}^{\ell-1} m^2\, P_{\ell,m}   + \sum_{\ell=1}^N \ell^3\, P_\ell \sum_{m=1}^{\ell-1}  m^3\, P_m  +\sum_{\ell=1}^N \ell^4\,P_\ell,
						\] 
						hence to show that $\E(C^{(N)})^2\lesssim N$, it remains to show that the first sum is of order $N$, since it follows from $P_\ell\lesssim \ell^{-4}$ that the second and third sum are at most of order $N$ for any $\gamma<1/2$. Equivalently, we may verify that $\sum_{m=1}^{\ell-1} m^2\, P_{\ell,m}\lesssim \ell^{-3}$, which is the final part of our argument.\\
						
						From now on, we work under the assumption that $\gamma<1/3$. We first consider the case $\delta>3$. We only need to treat $\gamma\in(1/\delta,1/3)$, for if we show that $\sum_{m=1}^{\ell-1} m^2\, P_{\ell,m}\lesssim \ell^{-3}$ for such $\gamma$ it also holds for $\gamma\leq 1/\delta$ by monotonicity. Lemma~\ref{lem:connprobcorr} yields, for $\ell\geq m$ and~$\gamma<1/3$, that
						\begin{align*}
							P_{\ell,m}=\int_0^1 Q_\ell(s)Q_m(s) \lesssim\,  \ell^{-\frac{2}{\gamma}} & + \mathds{1}\{m<\ell^\gamma\} \, \ell^{-\frac{2}{\gamma}} \int_{\ell^{-2}}^{m^{-2/\gamma}}s^{-\frac{1-\gamma}{\gamma}}\,\textup{d}s \\
							& + \ell^{-\frac{2}{\gamma}}m^{-\frac{2}{1-\gamma}} \int_{\ell^{-2}\vee m^{-2/\gamma}}^{m^{-2}}s^{-\frac{1-\gamma}{\gamma}-\frac{\gamma}{1-\gamma}}\,\textup{d}s\\
							& + \ell^{-\frac{2}{\gamma}}m^{-\frac{2}{\gamma}} \int_{m^{-2}}^1 s^{-\frac{2-2\gamma}{\gamma}}\,\textup{d}s\\
							& + \ell^{-\frac{2}{1-\gamma}}\int_{\ell^{-2/\gamma}}^{\ell^{-2}\wedge m^{-2/\gamma}} s^{-\frac{\gamma}{1-\gamma}}\,\textup{d}s\\
							& + \mathds{1}\{m\ge\ell^\gamma\}\ell^{-\frac{2}{1-\gamma}}m^{-\frac{2}{1-\gamma}}\int_{m^{-2/\gamma}}^{ \ell^{-2}} s^{-\frac{2\gamma}{1-\gamma}}\,\textup{d}s.
						\end{align*}
						The sum over the first term $\ell^{-2/\gamma}$ is of the desired order, since $$\sum_{m=1}^{\ell-1} m^2\, \ell^{-2/\gamma}\asymp \ell^{3-2/\gamma}\leq \ell^{-3},$$ for any $\gamma\leq1/3.$ The remaining terms are bounded by a constant times
						\begin{align*}
							\mathds{1}& \{m<\ell^\gamma\} \Big( \ell^{-\frac{2}{\gamma}-2(1-\frac{1-\gamma}{\gamma})} + \ell^{-\frac{2}{\gamma}}m^{-\frac{2}{1-\gamma}-\frac{2}{\gamma}(3-\frac{1}{\gamma(1-\gamma)})}+ \ell^{\frac{2\gamma-2}{1-\gamma}-2} \Big)\\
							& + \mathds{1}\{m\geq \ell^\gamma\}\Big(\ell^{-\frac{2}{\gamma}-6+\frac{2}{\gamma(1-\gamma)}}m^{-\frac{2}{1-\gamma}}+ \ell^{-\frac{2}{1-\gamma}}m^{\frac{1}{1-\gamma}(4-\frac{2}{\gamma})} +  m^{-\frac{2}{1-\gamma}}\ell^{-2+\frac{4\gamma-2}{1-\gamma}} \Big)\\
							& +\ell^{-\frac{2}{\gamma}}m^{-\frac{2}{\gamma}-6+\frac4\gamma} 
							\\
							= & \, \mathds{1}\{m<\ell^\gamma\} \Big(2 \ell^{-4} + \ell^{-\frac{2}{\gamma}}m^{\frac{2}{\gamma^2(1-\gamma)}-\frac{2}{1-\gamma}-\frac{6}{\gamma}} \Big)\\
							& + \mathds{1}\{m\geq \ell^\gamma\}\Big(\ell^{-4}m^{-\frac{2}{1-\gamma}}+ \ell^{-\frac{2}{1-\gamma}}m^{\frac{4\gamma-2}{\gamma(1-\gamma)}} + \ell^{\frac{4\gamma-2}{1-\gamma}-2}m^{-\frac{2}{1-\gamma}}\Big)\\
							& +\ell^{-\frac{2}{\gamma}}m^{\frac{2}{\gamma}-6}.
						\end{align*}
						Multiplying by $m^2$ and summing over $m$ yields
						$$ \sum_{m=1}^{\ell-1} m^2\, P_{\ell,m}\lesssim
						\ell^{3\gamma-4} + \ell^{-1-\frac{2}{1-\gamma}}+ \ell^{-3}
						\le 3\ell^{-3},$$						where we have used that $\gamma<1/3$ for the second term of the sum as well as in the final bound. 
						\\
						
						Let us finalise the proof by dealing with the case $\delta\leq 3$. In this case, necessarily, $\gamma<1/\delta$, due to our standing assumption that $\gamma<1/3$, and we need to apply the second case in Lemma~\ref{lem:connprobcorr} to obtain an estimate for $\sum_{m=1}^{\ell-1} m^2\, P_{l,m}$. Repeating the steps above yields
						\begin{align*}
							P_{l,m}\lesssim  \ell^{-\frac{2}{\gamma}} & +\ell^{-2\delta}m^{2\delta-6}\\
							& + \mathds{1}\{m<\ell^\gamma\} \Big(\ell^{-4} + \ell^{-2\delta}m^{\frac{1}{\gamma}(2\delta-4)} \Big)\\
							& +\mathds{1}\{m\geq \ell^\gamma\}\Big(\ell^{-4}m^{-\frac{2}{1-\gamma}}+\ell^{\frac{2\gamma}{1-\gamma}-4}m^{-\frac{2}{1-\gamma}}+\ell^{-\frac{2}{1-\gamma}}m^{\frac{4\gamma-2}{\gamma(1-\gamma)}} \Big).
						\end{align*}
						We only need to consider those summands of $\sum_{m=1}^{\ell-1} m^2\, P_{\ell,m}$ that did not appear already in the previous calculation, for these we get as upper bound a constant times 
						\[
						\ell^{-3}+\ell^{-6+3\frac\gamma{1-\gamma}} + \ell^{3\gamma-4} \le 3\ell^{-3}, 
						\]
						because $\gamma<1/3$. The proof is complete.
					\end{proof}
					
					\appendix
					\section{Elements of electrical network theory}\label{sect:aux}
					In this section, we provide some well-known results for reference. 
					The basic framework used behind the scenes of both our recurrence and transience proofs is that of electrical {network} theory. We restrict ourselves to recalling some basic results that we need in our proofs, a comprehensive treatment of the theory can be found for instance in \cite{LyonsPeres17}. A connected loop-free multigraph $G=(V(G),E(G))$ together with a \emph{conductance} function $C\colon E(G)\to (0,\infty)$ is called a \emph{network}. Note that we may always view $C$ as a function defined on $V(G)^{[2]}$ setting $C(e)=0$ for potential edges $e\notin E(G)$. The random walk $Y=(Y_i)_{i\geq 0}$ on $(G,C)$ is obtained by reweighing the transition probabilities of simple random walk on $G$ according to $C$, i.e.\ the walker chooses their way with probabilities proportional the sum of the conductances on the edges incident to their current position. In particular, we obtain simple random walk 
					on~$G$ as a special case, {if $C$ is constant}. We consider a locally finite network~$G$,~i.e.\ 
					$$\pi(x):=\sum_{e\in E(G): e \text{ incident to }x}C(e)<\infty\quad \text{for all }x\in V(G),$$ 
					then $\pi$ is an invariant measure for $Y$. Let further $\mathsf{P}^G(v\to Z)$ denote the probability that $Y$ visits $Z\subset V(G)$ before returning to $v\in V(G)$ when started in $Y_0=v\in V(G).$ To characterise recurrence and transience of $Y$ in infinite networks it is convenient to define the \emph{effective conductance} between $v\in V(G)$ and $Z\subset V(G)$ as
					\[
					C_{\textup{eff}}(v,Z)=C_{\textup{eff}}^G(v,Z)=\pi(v)\mathsf{P}^G(v\to Z),
					\]
					for finite $G$ and then extend the notion to infinite graphs via a limiting procedure. In particular, by identifying all vertices at graph distance further than $n$ from $v\in V(G)$ with one vertex $z_n$ (whilst removing any loops and keeping multiple edges with their conductances) we obtain a sequence of finite networks $(G_n,C_n)$. Moreover, the limit
					\[
					C_{\textup{eff}}^G(v,\infty)=\lim_{n\to\infty}C^{G_n}_{\textup{eff}}(v,z_n)\in[0,\infty)\quad\mbox{ for } v\in V(G),
					\]
					is well-defined. The following characterisation of recurrence and transience is classical, see e.g.\ {\cite[Theorem 2.3]{LyonsPeres17}}.  
					\begin{theorem}\label{thrm:effres}
						$Y$ is transient if and only if
						\[
						C_{\textup{eff}}(v,\infty)>0 \quad\text{ for some }v\in V(G).
						\]
						Morevover, if $C_{\textup{eff}}(v,\infty)>0$ for some $v\in V(G)$, then $C_{\textup{eff}}(v,\infty)>0$ for all $v\in V(G)$.
					\end{theorem}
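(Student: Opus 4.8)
The plan is to reduce the statement, via the finite exhaustion $(G_n,C_n)$ described just before the theorem, to the elementary characterisation of transience for an irreducible Markov chain: the walk is transient if and only if the probability of returning to its starting vertex is strictly less than one. Throughout I would fix $v\in V(G)$, write $\mathsf{P}_v$ for the law of the walk $Y$ on $G$ started at $v$ (an irreducible chain, since $G$ is connected), and work with the unnormalised reversing measure $\pi(v)=\sum_{e\ni v}C(e)$, which is finite and positive by local finiteness, leaves the finite-network identity $C_{\textup{eff}}(v,Z)=\pi(v)\mathsf{P}^{G}(v\to Z)$ intact, and is moreover unchanged under the collapses below since no loop at $v$ is ever created.

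Next I would identify the limiting quantity; this is the core step. Recall that $G_n$ is obtained from $G$ by collapsing $\{w\in V(G):\dist(v,w)>n\}$ to a single vertex $z_n$. Since $G_n$ and $G$ induce the same weighted multigraph on $\{w:\dist(v,w)\le n\}$ together with the edges leaving this set, the walk on $G_n$ started at $v$ and stopped when it first hits $z_n$ or returns to $v$ has the same law as the walk on $G$ started at $v$ and stopped when it first visits a vertex at distance $>n$ from $v$ or returns to $v$ — hitting $z_n$ corresponding to reaching distance $>n$. Hence
\[
\mathsf{P}^{G_n}(v\to z_n)=a_n:=\mathsf{P}_v(A_n),\qquad A_n:=\{Y\text{ visits a vertex at distance }>n\text{ from }v\text{ before returning to }v\}.
\]
I would then observe that the events $A_n$ are decreasing and that, $\mathsf{P}_v$-almost surely,
\[
\bigcap_{n\ge1}A_n=\{Y\text{ never returns to }v\}.
\]
The inclusion ``$\subseteq$'' is trivial; for ``$\supseteq$'' the point is that a walk that never returns to $v$ cannot be confined to any finite vertex set $F\ni v$ forever, since on that event it would visit some $u\in F$ infinitely often and, by irreducibility, reach $v$ thereafter almost surely — a contradiction; letting $F$ exhaust $V(G)$ through the balls around $v$ then shows the walk exits every ball, necessarily before returning to $v$. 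Consequently $a_n\downarrow\mathsf{P}_v(Y\text{ never returns to }v)$, so by definition $C_{\textup{eff}}^{G}(v,\infty)=\lim_n\pi(v)a_n=\pi(v)\,\mathsf{P}_v(Y\text{ never returns to }v)$, and since $\pi(v)\in(0,\infty)$,
\[
C_{\textup{eff}}^{G}(v,\infty)>0\iff \mathsf{P}_v(Y\text{ never returns to }v)>0.
\]

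To finish, I would note that the right-hand side above is exactly the statement that the walk started at $v$ is transient, and that transience is a class property of the irreducible chain $Y$ — holding for one starting vertex precisely when it holds for all — which yields simultaneously the claimed ``if and only if'' (the ``for some $v$'' being harmless) and the ``Moreover'' part. Alternatively, for the ``Moreover'' part directly: if $C_{\textup{eff}}^{G}(v,\infty)>0$, then for any $w$ the walk from $w$ reaches $v$ before returning to $w$ with some probability $p>0$, and a first-passage decomposition gives $\mathsf{P}_w(Y\text{ never returns to }w)\ge p\,\mathsf{P}_v(Y\text{ never returns to }v)>0$, hence $C_{\textup{eff}}^{G}(w,\infty)>0$. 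The main obstacle — really the only non-routine point — is the almost sure identity $\bigcap_n A_n=\{Y\text{ never returns to }v\}$ together with the monotone passage to the limit, i.e.\ the assertion that the finite exhaustion genuinely detects transience of the infinite network; this is where local finiteness (so that confinement to a finite ball forces a return) and the finiteness and positivity of $\pi(v)$ enter. The remaining steps are bookkeeping with the strong Markov property and the definitions recalled above.
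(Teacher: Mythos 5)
Your proof is correct and is exactly the standard argument from Lyons--Peres that the paper cites without supplying a proof: the finite-exhaustion escape probabilities $a_n=\mathsf{P}^{G_n}(v\to z_n)$ decrease to the escape probability $\mathsf{P}_v(Y\text{ never returns to }v)$ (with local finiteness ensuring confinement to a finite ball forces a return), and transience is a class property of the irreducible chain. One small caveat: your ``alternative'' first-passage decomposition for the Moreover part is not valid as stated, since reaching $v$ before returning to $w$ and thereafter never returning to $v$ does not preclude a later return to $w$; this is harmless, however, because your primary class-property argument already settles that part.
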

					In particular, a vanishing upper bound on the effective conductance is sufficient for recurrence. Let $v\in G$ be fixed. Recall that a \emph{cutset} $\Pi$ (for $v$) in $G$ is a set of edges that separates $v$ from~$\infty$, i.e. the connected component of $v\in (V(G), E(G)\setminus\Pi)$ is finite.
					\begin{theorem}[Nash-Williams, see e.g.\ \cite{Peres99}]\label{thrm:nash1}
						Let $Y_0=v$ and \(\{\Pi_n\}_{n=1}^{\infty}\) be disjoint cutsets for~$v$. Denote by \(C_{\Pi_n}\) the sum of the conductances of edges in \(\Pi_n\). Then 
						\[
						C_{\textup{eff}}(v,\infty)\leq \left(\sum_nC_{\Pi_n}^{-1}\right)^{-1}.
						\]
						In particular, if
						\[
						\sum_nC_{\Pi_n}^{-1}=\infty,
						\]
						then the random walk $Y$ is recurrent.
					\end{theorem}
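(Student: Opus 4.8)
The plan is to establish the stated upper bound on $C_{\textup{eff}}(v,\infty)$ and then read off recurrence from Theorem~\ref{thrm:effres}. The tool is the variational (Thomson/Dirichlet) description of effective resistance: writing $R_{\textup{eff}}(\cdot)=C_{\textup{eff}}(\cdot)^{-1}$ and $\mathcal{E}(\theta)=\sum_{e\in E(G)}\theta(e)^2/C(e)$ for the energy of a flow $\theta$, one has $R_{\textup{eff}}^G(v,\infty)=\inf_\theta\mathcal{E}(\theta)$, the infimum ranging over all unit flows $\theta$ from $v$ to $\infty$; this holds for infinite networks and is classical, see \cite[Ch.~2]{LyonsPeres17}. (Equivalently, one may argue on the finite exhausting networks $(G_n,C_n)$ from the definition of $C_{\textup{eff}}^G(v,\infty)$ and pass to the limit, the only point to verify being that each cutset $\Pi_n$ of $G$ induces, for all large $n$, a cutset separating $v$ from $z_n$ in $G_n$ whose total conductance does not exceed $C_{\Pi_n}$.) It therefore suffices to show that $\mathcal{E}(\theta)\geq\sum_{n\geq 1}C_{\Pi_n}^{-1}$ for every unit flow $\theta$ from $v$ to $\infty$.

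Fix such a flow $\theta$ and a cutset $\Pi_n$, and let $S_n$ be the vertex set of the (by definition finite) connected component of $v$ in the graph $(V(G),E(G)\setminus\Pi_n)$. Every edge of $G$ with exactly one endpoint in $S_n$ must lie in $\Pi_n$, since otherwise its other endpoint would be joined to $v$ in $E(G)\setminus\Pi_n$. Summing the divergence of $\theta$ over $S_n$ --- which equals $1$, since $\theta$ is divergence-free except for a unit source at $v\in S_n$ and $S_n$ is finite --- shows that the net flow of $\theta$ out of $S_n$ is $1$, and hence $\sum_{e\in\Pi_n}|\theta(e)|\geq 1$. By the Cauchy--Schwarz inequality,
\[
1\;\leq\;\Big(\sum_{e\in\Pi_n}|\theta(e)|\Big)^{2}\;=\;\Big(\sum_{e\in\Pi_n}\frac{|\theta(e)|}{\sqrt{C(e)}}\,\sqrt{C(e)}\Big)^{2}\;\leq\;\Big(\sum_{e\in\Pi_n}\frac{\theta(e)^{2}}{C(e)}\Big)\Big(\sum_{e\in\Pi_n}C(e)\Big),
\]
so that $\sum_{e\in\Pi_n}\theta(e)^{2}/C(e)\geq C_{\Pi_n}^{-1}$. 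Since the cutsets $\{\Pi_n\}_{n\geq 1}$ are pairwise disjoint, every edge of $G$ contributes to at most one of these partial sums, and summing over $n$ gives $\mathcal{E}(\theta)\geq\sum_{n\geq1}C_{\Pi_n}^{-1}$. Taking the infimum over unit flows yields $R_{\textup{eff}}^G(v,\infty)\geq\sum_{n\geq 1}C_{\Pi_n}^{-1}$, which is exactly the asserted bound on $C_{\textup{eff}}(v,\infty)$. If $\sum_n C_{\Pi_n}^{-1}=\infty$, then $C_{\textup{eff}}(v,\infty)=0$, and recurrence of $Y$ follows from Theorem~\ref{thrm:effres}.

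I do not anticipate a genuine difficulty here: once Thomson's principle is granted, the argument is just the divergence identity, Cauchy--Schwarz, and disjointness of the cutsets. The only items requiring any care, both entirely routine, are citing (rather than reproving) the variational formula for effective resistance, and --- if one prefers to avoid it and work with the finite exhaustion instead --- checking that a cutset of $G$ truncates to a cutset of $G_n$ of no greater total conductance for $n$ large. In the applications in Section~\ref{secRecurrence} the cutsets $\Pi_n$ are random, but they are chosen measurably from the configuration, so the bound holds on the full-probability event on which they are well defined.
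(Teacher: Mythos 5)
Your proof is correct. The paper does not prove this statement at all --- it is quoted as a classical result with a reference to Peres's lecture notes --- and your argument (Thomson's principle, the unit net flux across each cutset via the divergence identity, Cauchy--Schwarz, and disjointness of the cutsets) is precisely the standard proof found in the cited sources, so there is nothing to reconcile.
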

					We close this section by stating a few rules for calculating effective conductances, which are used in many instances throughout the paper.
					\begin{theorem}[Electrical network calculus, see e.g.\ \cite{LyonsPeres17}]\label{thrm:monotonicity}
						Let $G$ be a multigraph and let $C,C'$ be conductances on $E(G)$.
						\begin{itemize}
							\item (Parallel Law) Replacing two parallel edges (i.e.\ two edges adjacent to the same pair of vertices) $e_1,e_2\in E(G)$ by a single edge with conductance $C(e_1)+C(e_2)$ leaves the effective conductance $C_{\textup{eff}}$ unchanged.
							\item (Series Law) If a vertex $x\in V(G)$ of degree two and its incident edges $(x,y),(x,z)$ (with $y\neq z$) are replaced by a single edge $(y,z)$ with conductance $(C(x,y)^{-1}+C(x,z)^{-1})^{-1}$, then the effective conductance $C_{\textup{eff}}(v,w)$ remains unchanged (for all $v,w\neq {x}$).
							\item (Monotonicity Principle) If $C(e)\leq C'(e)$ for all $e\in E(G)$, then
							\[
							C_{\textup{eff}}(v,Z)\leq C'_{\textup{eff}}(v,Z)\quad \text{for all }v\in V(G), Z\subset V(G).
							\]
							\item (Short-circuiting) {If $x,y\in V(G)$ are collapsed into one vertex and all edges are retained then the effective conductances between all remaining vertices do not decrease.}
							\item (Flow-cut inequality) If $v\in V(G)$ and $Z,Z'\subset V(G)$ are disjoint and such that $v$ and $Z$ are disconnected in $G$ upon removing all edges incident to $Z'$, then
							\[C_{\textup{eff}}(v,Z)\leq C_{\textup{eff}}(v,Z').\]
						\end{itemize}
					\end{theorem}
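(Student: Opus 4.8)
The plan is to derive all five rules from the two standard dual variational characterisations of effective conductance on a \emph{finite} network $(G,C)$ with source $v$ and sink set $Z$, and then to pass to the exhaustion limit that defines $C_{\textup{eff}}(v,\infty)$ in Theorem~\ref{thrm:effres}. Write $\mathcal E_C(f)=\sum_{e=\{x,y\}\in E(G)}C(e)\,(f(x)-f(y))^2$ for the Dirichlet energy. I would first recall Dirichlet's principle, $C_{\textup{eff}}(v,Z)=\min\{\mathcal E_C(f):f(v)=1,\ f|_Z=0\}$, with the harmonic voltage as the unique minimiser, together with the dual Thomson principle $C_{\textup{eff}}(v,Z)^{-1}=\min\{\sum_e \theta(e)^2/C(e):\theta\text{ a unit flow from }v\text{ to }Z\}$; both are classical and can be cited from \cite[Ch.~2]{LyonsPeres17}. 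With these in hand, each item reduces to a one-line observation.

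The \textbf{Parallel Law} is purely algebraic: the contribution of two parallel edges $e_1,e_2$ between $x$ and $y$ to $\mathcal E_C(f)$ is $(C(e_1)+C(e_2))(f(x)-f(y))^2$, which is exactly the contribution of a single edge of conductance $C(e_1)+C(e_2)$; since the feasible set of $f$ is untouched, the minimum is unchanged. For the \textbf{Series Law}, let $x$ be a degree-two vertex with neighbours $y\neq z$ and $x\notin\{v,w\}$. Any minimiser of $\mathcal E_C$ is harmonic at $x$, so $f(x)=\frac{C(\{x,y\})f(y)+C(\{x,z\})f(z)}{C(\{x,y\})+C(\{x,z\})}$; substituting this value, the two edges incident to $x$ contribute $\big(C(\{x,y\})^{-1}+C(\{x,z\})^{-1}\big)^{-1}(f(y)-f(z))^2$, i.e.\ exactly one edge of the stated conductance, and minimising over the remaining variables gives the claim. (Dually, both laws also drop out of Thomson's principle by optimally splitting, respectively merging, the unit flow.)

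The remaining three rules are ``restrict/enlarge the feasible set'' arguments. For the \textbf{Monotonicity Principle}, each summand $C(e)(f(x)-f(y))^2$ in $\mathcal E_C(f)$ is nondecreasing in $C(e)$ while the feasible set $\{f:f(v)=1,f|_Z=0\}$ does not depend on $C$, so $\min_f\mathcal E_C(f)$ is nondecreasing in $C$. For \textbf{Short-circuiting}, identifying $x$ and $y$ amounts to imposing the extra linear constraint $f(x)=f(y)$ in Dirichlet's principle (equivalently, deleting a constraint in Thomson's principle), so shrinking the domain of the minimisation changes $C_{\textup{eff}}$ monotonically in the asserted direction, and self-loops created by the identification contribute nothing. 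For the \textbf{Flow-cut inequality} I would use the probabilistic reading $C_{\textup{eff}}(v,A)=\pi(v)\,\mathsf{P}^G(v\to A)$: if removing all edges incident to $Z'$ disconnects $v$ from $Z$, then any excursion from $v$ reaching $Z$ before returning to $v$ must reach $Z'$ first, so $\{v\to Z\}\subseteq\{v\to Z'\}$ and hence $C_{\textup{eff}}(v,Z)\leq C_{\textup{eff}}(v,Z')$ (or, dually, push a unit $v$--$Z$ flow forward to a unit $v$--$Z'$ flow of no larger energy).

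Finally, each of the five identities is stable under the exhaustion $(G_n,C_n)$ that defines $C_{\textup{eff}}(v,\infty)=\lim_n C_{\textup{eff}}^{G_n}(v,z_n)$, since the operations commute with the identification of far-away vertices; combined with convergence of the approximants this upgrades everything to infinite networks. I do not expect a genuine mathematical obstacle here — the statement is entirely classical and I would ultimately just reference \cite{LyonsPeres17} (and \cite{Peres99} for the probabilistic form). The only thing requiring care is bookkeeping: keeping the source/sink conventions consistent with how the rules are invoked in Sections~\ref{secTransience} and~\ref{secRecurrence}, in particular making sure the Series Law is applied only to vertices that are not terminals of the effective conductance under consideration, and that ``short-circuiting/gluing'' is used with the correct monotonicity direction for conductances as opposed to resistances.
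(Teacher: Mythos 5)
The paper offers no proof of this theorem: it is stated in the appendix purely for reference, with a pointer to \cite{LyonsPeres17}. Your sketch via the two dual variational characterisations (Dirichlet's principle for the effective conductance, Thomson's principle for its reciprocal) is the standard textbook derivation and is sound: the parallel and series laws reduce to algebraic identities for the Dirichlet energy (the latter using harmonicity of the minimiser at the eliminated degree-two vertex), the remaining three items are restriction/enlargement-of-the-feasible-set arguments, and the passage to $C_{\textup{eff}}(v,\infty)$ via the exhaustion $(G_n,C_n)$ is routine. One point worth flagging: your short-circuiting argument --- imposing the extra constraint $f(x)=f(y)$ shrinks the feasible set of the Dirichlet minimisation --- shows that the effective \emph{conductance} does not decrease, equivalently that effective resistances do not \emph{increase}. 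That is the direction in which the rule is actually invoked in the body of the paper (vertices are collapsed precisely to obtain an \emph{upper} bound on $C_{\textup{eff}}(\mathbf 0,\infty)$, e.g.\ in the proof of Theorem~\ref{thm:recurrence2new}), whereas the statement as printed reads ``effective resistances \dots do not decrease''; your derivation resolves this in favour of the correct sign. Your closing caveats --- that the series law may only be applied at vertices which are not terminals of the effective conductance under consideration, and that gluing must be read with the conductance rather than the resistance orientation --- are exactly the right bookkeeping, and the flow-cut item is correctly handled by the excursion inclusion $\{v\to Z\}\subseteq\{v\to Z'\}$ together with $C_{\textup{eff}}(v,\cdot)=\pi(v)\,\mathsf{P}^G(v\to\cdot)$.
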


\printbibliography


\begin{acks}
We thank Noam Berger for correspondence concerning the recurrence results.
\end{acks}


\end{document}